\newcommand{\ua}{u^\alpha}
\newcommand{\ub}{u^\beta}
\newcommand{\Tab}{T^{\alpha\beta}}
\title[Classical limit  of the  relativistic Cucker-Smale flocking]{Optimal convergence speed in the classical limits of relativistic Cucker-Smale models}
\author[Ha]{Seung-Yeal Ha}
\address[Seung-Yeal Ha]{\newline Department of Mathematical Sciences and Research Institute of Mathematics\newline
    Seoul National University, Seoul 08826, Republic of Korea}
\email{syha@snu.ac.kr}
\author[Ruggeri]{Tommaso Ruggeri}
\address[Tommaso Ruggeri]{\newline Department of Mathematics and Alma Mater Research Center on Applied Mathematics AM$^2$,
University of Bologna, Bologna, Italy}
\email{tommaso.ruggeri@unibo.it}
\author[Xiao]{Qinghua Xiao}
\address[Qinghua Xiao]{\newline  Innovation Academy for Precision Measurement Science and Technology, Chinese Academy of Sciences,  Wuhan 430071, China}
\email{xiaoqh@apm.ac.cn}
\newtheorem{theorem}{Theorem}[section]
\newtheorem{lemma}{Lemma}[section]
\newtheorem{corollary}{Corollary}[section]
\newtheorem{proposition}{Proposition}[section]
\newtheorem{remark}{Remark}[section]
\newtheorem{definition}{Definition}[section]
\newcommand{\bbr}{\mathbb R}
\numberwithin{equation}{section}
\begin{document}

\date{}

\thanks{2020 Mathematics Subject Classification. 2010 MSC: 70F99, 92B25.}
\thanks{Key words and phrases. Classical limit, flocking, relativistic Cucker-Smale model.}
\thanks{Achnowledgment: The work of S.-Y. Ha was supported by NRF-2020R1A2C3A01003881 and the LAMP Program of the National Research Foundation of Korea (NRF) grant funded by the Ministry of Education (No. RS-2023-00301976). The work of Q. H. Xiao was supported by the National Natural Science Foundation of China grant 12271506 and the National Key Research and Development Program of
China (No.2020YFA0714200). The work of T. Ruggeri was partially supported and carried out in the framework of the activities of the Italian National Group for Mathematical Physics [Gruppo Nazionale per la Fisica Maematica (GNFM/IndAM)]}

\begin{abstract} We study quantitative estimates for the flocking and uniform-time classical  limit to the relativistic Cucker-Smale (in short RCS) model introduced in \cite{Ha-Kim-Ruggeri-ARMA-2020}. Different from previous works, we do not neglect the relativistic effect on the presence of the pressure in momentum equation. For the RCS model, we provide a quantitative estimate on  the uniform-time classical limit with an optimal convergence rate which is the same as in finite-time classical limit under a relaxed initial condition. We also allow corresponding initial data for the RCS and Cucker-Smale (CS) model  to be different in the classical limit. This removes earlier constraints employed in the previous classical limit. As a direct application of this optimal convergence rate in the classical limit of the RCS model, we derive an optimal convergence rate for the corresponding uniform-time classical limit for the kinetic RCS model. 
 \end{abstract}

\maketitle
\date{}
\centerline{\date}
 \begin{center}
Dedicated to Prof. Shih-Hsien Yu on his 60th birthday, with friendship and admiration.    

\vspace{1cm}
 \end{center}

\section{Introduction} \label{sec:1}
\setcounter{equation}{0}
Collective behaviors of interacting particle systems are often observed in our daily life, to name a few, aggregation of bacteria \cite{T-B-L, T-B}, flocking of birds \cite{C-S, R, H-T}, swarming of fish \cite{Ao, D-M2, T-T}, synchronization of fireflies and pace-maker cells \cite{B-B, Ku2, Pe, W2} and herding of sheep, etc. Among them, we are interested in the flocking behaviors in which all agents move with the same velocity using a simple interaction rule between particles. In literature, several mathematical models have been proposed to model flocking behaviors. For example, Reynolds introduced a simple simulation model \cite{R} for the flocking of boids based on three rules (repulsion, velocity alignment and attraction), and then Vicsek introduced a discrete dynamical system \cite{V-C-B-C-S}  and showed that four different dynamics can emerge from given initial data by varying the number of particles in a fixed computation domain and changing a noise strength. Later, this simulation result was further analyzed in \cite{J-L-M} based on the infinite assumptions on connectivity. 

In 2007, Cucker and Smale (CS) introduced an analytically tractable particle model \cite{C-S} based on Newton's law of motion, where each particle experiences an internal force determined by the weighted sum of relative velocities. For an overview of this topic, we refer to books and survey articles \cite{A-B-F, B-H, D-B1, P-R, VZ, Wi1}.
Ha and Ruggeri \cite{Ha-Ruggeri-ARMA-2017} were the first to observe that the momentum equation in the CS model is equivalent to that of a gas mixture, under the assumption of isothermal processes and spatially homogeneous solutions (i.e., solutions that depend only on time). This observation allowed them to generalize the Cucker-Smale model by incorporating the effects of distinct energy balance laws for each species, thereby accounting for thermodynamic effects. This generalized model was named as the TCS model, an acronym for the "{\it{thermodynamic Cucker-Smale model}}". This model was further generalized to a relativistic framework by Ha, Kim, and Ruggeri \cite{Ha-Kim-Ruggeri-ARMA-2020}. In this context, the authors introduced a relativistic counterpart of the TCS model, still using the analogy with a mixture of gases and focusing on spatially homogeneous solutions. Then, using the method of \emph{principal sub-systems}, which was introduced for hyperbolic systems by Boillat and Ruggeri \cite{B-R}, it is possible to consider a simplified \emph{mechanical} model of the previous system \cite{Ha-Kim-Ruggeri-ARMA-2020}. This model becomes a natural generalization of the relativistic Cucker-Smale model (RCS). The flocking and classical limit of this simplified model have been extensively studied in a series of works \cite{Ahn-Ha-Kim-JMP-2022,Ahn-Ha-Kim-CPAA-2021,Ha-Kim-Ruggeri-ARMA-2020,Ha-Kim-Ruggeri-CMS-2021}. In the relativistic case, the momentum equation also includes the time derivative of the pressure as a relativistic effect. In all previous studies, pressure effect was ignored. 

The purpose of this paper is to account for the relativistic effect of pressure in RCS and to enhance certain aspects of the qualitative analysis compared to previous studies. In particular, the main novelty of this work is the derivation of an optimal convergence rate in the uniform-time classical limit, which matches that of the finite-time classical limit. As a direct application of this optimal convergence rate, we provide a straightforward proof for the uniform-time classical limit of the kinetic RCS model.

The rest of this paper is organized as follows. In Section \ref{sec:2}, we review the relativistic model for gas mixture and entropy principle and study the relativistic TCS model and its reduction to the classical CS model, and we summarize our main results. In Section \ref{sec:3}, we present a refined sufficient framework for asymptotic flocking and derive an optimal convergence speed in the uniform-in-time mean field limit from the relativistic CS model to the CS model. In Section \ref{sec:4}, we consider the  corresponding issues to the kinetic CS model. Finally Section \ref{sec:5} is devoted to a brief summary of main results and some remaining issues for a future work. \newline


\noindent {\bf Gallery of Notation}: Below, we list notation for physical and thermodynamic observables to be used throughout the paper. Let $a \in [N]$ be any index.
\begin{align*}
	\begin{aligned}
		&  n_a:~\mbox{particle number}, \quad  m_a:~\mbox{mass in rest frame}, \quad \rho_a= n_a m_a:~\mbox{density},  \\
		& \Gamma_a= \frac{1}{\sqrt{1-{v^2_a}/{c^2}}}:~\mbox{the Lorentz factor}, \quad c:~\mbox{speed of light}, \quad v^i_a:~\mbox{velocity}, \\
		& u^\alpha_a \equiv (u^0_a = \Gamma_a c, u^i_a= \Gamma_a v^i_a):~\mbox{four-velocity vector}, \quad  p_a:~\mbox{pressure}, \\
		&  h^{\alpha \beta}_a =  \ua_a \ub_a /{c^2} - g^{\alpha\beta}:~\mbox{the projector tensor},\\
		& g^{\alpha\beta}:~ \mbox{metric tensor with signature $(+,-,-,-)$}, \\
		& \varepsilon_a:~\mbox{internal energy density}, \quad \partial_\alpha = \partial / \partial x_\alpha,~~x^\alpha \equiv (x^0 = c t, x^i):~\mbox{space-time coordinates},
	\end{aligned}
\end{align*}
$	e_a= \rho_a(c^2+\varepsilon_a)$
is the energy-that is, the sum of the energy in the rest frame and  the internal energy. The Greek indices run from $0$ to $4$ and the Latin indices from $1$ to $3$ and, as usual, pairs of lower/upper indices indicate summation. \newline

\section{Descriptions of models and main results}\label{sec:2}
\setcounter{equation}{0}
In this section, we review the relativistic fluid model for a mixture and entropy principle for the proposed fluid model, and the  TCS model according with the paper \cite{Ha-Kim-Ruggeri-ARMA-2020}. Consider $N$ species mixture of relativistic Eulerian gases in $\bbr^3$, and we assume that each constituent in the simple mixture of gases obeys the same balance law as for a single fluid, and there are production terms which take into account of interchanges in momentum and energy between continents, and then summarize our main results.  \newline

\subsection{The Relativistic mixture of gas and relativistic CS model}
For each species $a \in [N]$, let $V^\alpha_a$ and $T^{\alpha\beta}_a$ be the particle-particle flux and energy-momentum tensor \cite{Anile, Cercignani-Kremer, Groot-Leeuwen-Weert-1980, Synge,Taub-1967,book}, respectively:
\begin{align*}
	V^\alpha_a := \rho_a \ua_a \quad \mbox{and} \quad  \Tab_a := p_a h^{\alpha\beta}_a +\frac{e_a}{c^2} \ua_a \ub_a \quad \alpha, \beta \in \{0\}\cup[3].
\end{align*}
The field equations for relativistic single fluid correspond to the balance of particle numbers and energy-momentum tensor in Minkowski space:
\begin{align}\label{massmomenergy}
	& \partial_\alpha V^\alpha_a = \tau_a,  \quad
	\partial_\alpha T^{\alpha\beta}_a=  \mathcal{M}^{\beta}_a,\quad a \in [N].
\end{align}
Here, $\tau_a$ and $\mathcal{M}^{\beta}_a$ represent production terms that satisfy zero-sum constraints, ensuring that the total particle number and energy-momentum of the mixture are conserved:
\begin{align*}\label{zerosumcon}
	\sum_{a=1}^N\tau_a = 0,  \quad
	\mathcal{M}^{\beta}_a=0,\quad \beta \in \{0 \} \cup [3].
\end{align*}
Since $\tau_a$ is  due to chemical reactions, for simplicity, we may assume that
$$\tau_a=0, \quad a \in [N]. $$
The relations \eqref{massmomenergy} can be rewritten as balanced equations in space-time:
\begin{equation}
	\begin{cases}  \label{constituteaeqn}
		\displaystyle \partial_ t\left(\rho_a \Gamma_a\right) +  \partial_i\left(\rho_a \Gamma_av^i_a\right)=0, \quad a\in[N],\\
		\displaystyle  \partial_ t\left(\Gamma_a^2v^j_a\left(\rho_a+\frac{p_a+\rho_a\varepsilon_a}{c^2}\right)\right) +  \partial_i\left(p_a\delta^{ij}+\Gamma_a^2v^j_av^i_a\left(\rho_a+\frac{p_a+\rho_a\varepsilon_a}{c^2}\right)\right) = M^j_a,  \\
		\displaystyle \partial_ t\big(p_a(\Gamma_a^2-1)+\rho_a\big(\Gamma_a^2\varepsilon_a+c^2\Gamma_a(\Gamma_a-1)\big)\big)  \\
		\displaystyle \hspace{2cm} +\partial_i\big(p_a\Gamma_a^2+\rho_a\big(\Gamma_a^2\varepsilon_a+c^2\Gamma_a(\Gamma_a-1)\big)v^i_a\big) =\mathcal{E}_a,
	\end{cases}
\end{equation}
where the momentum and energy production rates are defined as follows:
\begin{align*}
	M^i_a:=\mathcal{M}^i_a, \quad \mathcal{E}_a:=c\mathcal{M}^0_a.
\end{align*}
In   \cite{Ha-Kim-Ruggeri-ARMA-2020}, global quantities $\rho, v^j, \varepsilon$, and $p$ are also defined such that the sum of densities in \eqref{constituteaeqn} is the same as a single fluid:
\begin{align}\label{globdens}
	\begin{aligned}
		&\rho\Gamma=\sum_{a=1}^N\rho_a\Gamma_a,\qquad \Gamma=\frac{1}{\sqrt{1-\frac{v^2}{c^2}}},  \\
		&\Gamma^2v^j\left(\rho+\frac{p+\rho\varepsilon}{c^2}\right)
		=\sum_{a=1}^N\left(\Gamma_a^2v^j_a\left(\rho_a+\frac{p_a+\rho_a\varepsilon_a}{c^2}\right)\right), \\
		&p\left(\Gamma^2-1\right)+\rho\big(\Gamma^2\varepsilon+c^2\Gamma(\Gamma-1)\big)\big)\\
		&\hspace{2cm} =\sum_{a=1}^N\left(p_a\left(\Gamma_a^2-1\right)
		+\rho_a\left(\Gamma_a^2\varepsilon_a+c^2\Gamma_a\left(\Gamma_a-1\right)\right)\right).
	\end{aligned}
\end{align}
Then the total pressure $p$, total density $\rho$, the average velocity $\mathbf{v}\equiv(v^j)$, and the total internal energy $\varepsilon$ can be defined in a unique manner. They are reduced to the classical ones in the formal classical limit $c\rightarrow \infty$. The global average velocity $v^j$ can be associated to the global four-velocity
$$U^{\alpha}\equiv \Gamma\left(c, v^i\right),\qquad U^{\alpha}U_{\alpha}=c^2.$$ 
Additional state equations are needed in order to make the system (\ref{constituteaeqn}) be closed. In this paper, we assume that
constitutive functions for $p_a$ and $\varepsilon_a$ are the same of an ideal classical polytropic gas\footnote{In reality, the energy becomes more complex when described using kinetic theory, as it is well known that this theory assumes the so-called Synge expression, which involves the ratio of Bessel functions (see, e.g., \cite{R-X-Z}).}:
\begin{equation}  \label{press}
	p_a= k_B n_aT_a=\frac{k_B}{m_a}\rho_a T_a, \quad \mbox{and} \quad 
	e_a= \rho_a \big(c^2+c_{V_a}T_a\big), \qquad a \in [N],
\end{equation}
where $k_B$ is the Boltzmann constant, and  $c_{V_a}$ is the specific heat that is assumed to be constant (polytropic gas).
\subsubsection{Thermomechanical ensemble} \label{sec:2.1} 
We assume the spatial homogeneity in \eqref{constituteaeqn}, Then, we use \eqref{press} to obtain the corresponding ODE system:
\begin{align}\label{eulerODE}
	\begin{aligned}
		&\frac{\mathrm{d}}{\mathrm{d} t}\left(\rho_a\Gamma_a\right)=0,\quad a \in [N],\\
		&\frac{\mathrm{d}}{\mathrm{d} t}\left(\rho_a\Gamma_a^2\mathbf{v}_a\left(1+\frac{\frac{p_a}{\rho_a}+c_{V_a}T_a}{c^2}\right)\right)
		=\mathbf{M}_a,\\
		&\frac{\mathrm{d}}{\mathrm{d} t}\left[p_a\left(\Gamma_a^2-1\right)+\rho_a\left(\Gamma_a^2c_{V_a}T_a+c^2\Gamma_a\left(\Gamma_a-1\right)
		\right)\right]=\mathcal{E}_a.
	\end{aligned}
\end{align}
It follows from  $\eqref{eulerODE}_1$ that  $\rho_a\Gamma_a$ is constant along the dynamics \eqref{eulerODE}. Hence, without loss of generality, we may also assume it to be unity:
\begin{align}\label{rhoGamma}
	\rho_a\Gamma_a=1, \quad a \in [N].
\end{align}
Now, we substitute \eqref{rhoGamma} into other equations of \eqref{eulerODE}, and we introduce the dynamics of position variable $\mathbf{x}_a$ to the following system of relativistic TCS model:
\begin{align}\label{TCSmodel}
	\begin{aligned}
		&\frac{\mathrm{d}\mathbf{x}_a}{\mathrm{d} t}=\mathbf{v}_a,\quad a \in [N],\\
		&\frac{\mathrm{d}}{\mathrm{d} t}\left(\Gamma_a\mathbf{v}_a\left(1+\frac{p_a\Gamma_a+c_{V_a}T_a}{c^2}\right)\right)=\mathbf{M}_a,\\
		&\frac{\mathrm{d}}{\mathrm{d} t}\left[\big(p_a\left(\Gamma_a+1\right)+c^2\big)\left(\Gamma_a-1\right)+c_{V_a}\Gamma_aT_a
		\right]=\mathcal{E}_a.
	\end{aligned}
\end{align}
From the entropy principle, the ansetz for the production terms $\mathbf{M}_a$ and $\mathcal{E}_a$ can be made. 
In fact if we   define the following relativistic diffusion four-vectors:
\begin{align}\label{diffvect}
	W_a^{\alpha}=U_a^{\alpha}-\Gamma_aU^{\alpha}=\Gamma_a\Big((1-\Gamma)c, u_a^i\Big),
\end{align}
where
\begin{align}\label{uvGamma}
	u_a^i=v_a^i-\Gamma v^i,
\end{align}
and  we also define
\begin{align}\label{mmathE}
	\widehat{\mathbf{M}}_a=\mathbf{M}_a\equiv\left(M_a^i\right) \quad \mbox{and} \quad  \widehat{\mathcal{E}}_b=\mathcal{E}_b-\Gamma v_i\widehat{M}_a^i,
\end{align}
where $v_i=-v^i$ by the covariant-contravariant relation. In \cite{Ha-Kim-Ruggeri-ARMA-2020}, the following entropy production was evaluated:
\begin{align}\label{SigmaEm}
	\Sigma=\sum_{a=1}^{N-1}\left[\left(\frac{\Gamma_a}{T_a}-\frac{\Gamma_N}{T_N}\right)\widehat{\mathcal{E}}_a
	-\left(\frac{\Gamma_a u_{ai}}{T_a}-\frac{\Gamma_N u_{Ni}}{T_N}\right)\widehat{m}^i_a\right],
\end{align}
was obtained and therefore it satisfies the entropy inequality $\Sigma>0$ requiring as usual in thermodynamics that \eqref{SigmaEm}  is a quadratic form. This implies that there exist positive definite matrices $(\psi(|\mathbf{x}_a-\mathbf{x}_b|))$ and $(\theta(|\mathbf{x}_a-\mathbf{x}_b|))$ such that
\begin{align} 
	\begin{aligned} \label{hatmE}
		& \widehat{\mathbf{M}}_a =\frac{1}{N}\sum_{b=1}^{N-1} \psi(|\mathbf{x}_a-\mathbf{x}_b|)\Big(\frac{\Gamma_N \mathbf{u}_{N}}{T_N}-\frac{\Gamma_b \mathbf{u}_{b}}{T_b}\Big), \\
		&\widehat{\mathcal{E}}_a=\frac{1}{N}\sum_{b=1}^{N-1}\theta(|\mathbf{x}_a-\mathbf{x}_b|)\Big(\frac{\Gamma_b}{T_b}-\frac{\Gamma_N}{T_N}\Big).
	\end{aligned}
\end{align}
Correspondingly, the global quantities defined in \eqref{globdens} satisfy the following set of relations:
\begin{align}\label{Sglobdens}
	\begin{aligned}
		&\rho\Gamma=N, \quad \Gamma\mathbf{v}\left(1+\frac{p\Gamma+c_{V}T}{c^2}\right)
		=\sum_{a=1}^N\Gamma_a\mathbf{v}_a\left(1+\frac{p_a\Gamma_a+c_{V_a}T_a}{c^2}\right)=\mbox{constant},\\
		&\left(p\left(\Gamma+1\right)+c^2\right)\left(\Gamma-1\right)+\Gamma c_{V}T \\
		& \hspace{2cm} =\sum_{a=1}^N\left[\left(p_a\left(\Gamma_a+1\right)+c^2\right)\left(\Gamma_a-1\right)+\Gamma_ac_{V_a}T_a
		\right]=\mbox{constant},
	\end{aligned}
\end{align}
where $c_V=\sum_{a=1}^N c_{Va}$.
We further require that the initial data of \eqref{TCSmodel} satisfy
$$\mathbf{v}(0)=\mathbf{0},\qquad T(0)=T_0.$$
Namely, we have
\begin{align}\label{initialODE}
	\begin{aligned}
		&\sum_{a=1}^N\left(\Gamma_a(0)\mathbf{v}_a(0)\left(1+\frac{p_a(0)\Gamma_a(0)+c_{V_a}T_a(0)}{c^2}\right)\right)=0,\\
		& \sum_{a=1}^N\left[\left(p_a(0)\left(\Gamma_a(0)+1\right)+c^2\right)\left(\Gamma_a(0)-1\right)+\Gamma_a(0)c_{V_a}T_a(0)
		\right]=c_{V}T_0.
	\end{aligned}
\end{align}
We combine \eqref{Sglobdens} and \eqref{initialODE} to get
$$\mathbf{v}(t)=\mathbf{0},\qquad T(t)=T_0.$$
This corresponds to the choice of special reference frame with $\mathbf{v}=0$.
From $\mathbf{v}=0$, \eqref{uvGamma}, \eqref{mmathE}, and \eqref{hatmE}, we can obtain 
\begin{align*}
	\begin{aligned} 
		& \mathbf{u}_a=\mathbf{v}_a,\quad \mathbf{M}_a=\frac{1}{N}\sum_{b=1}^{N-1}
		\psi(|\mathbf{x}_a-\mathbf{x}_b|)\left(\frac{\Gamma_N \mathbf{v}_{N}}{T_N}-\frac{\Gamma_b \mathbf{v}_{b}}{T_b}\right), \\
		& \mathcal{E}_a=\frac{1}{N}\sum_{b=1}^{N-1}\theta(|\mathbf{x}_a-\mathbf{x}_b|)\left(\frac{\Gamma_b}{T_b}-\frac{\Gamma_N}{T_N}\right).
	\end{aligned}
\end{align*}

As in \cite{Ha-Kim-Ruggeri-ARMA-2020, Ha-Ruggeri-ARMA-2017}, we transform $(N-1)\times(N-1)$ matrices $(\psi(|\mathbf{x}_a-\mathbf{x}_b|))$ and $(\theta(|\mathbf{x}_a-\mathbf{x}_b|))$ to $N\times N$ matrices $(\phi(|\mathbf{x}_a-\mathbf{x}_b|))$ and $(\zeta(|\mathbf{x}_a-\mathbf{x}_b|))$ by defining $(\phi(|\mathbf{x}_a-\mathbf{x}_b|))$ as follows:
\begin{align*}
	\begin{aligned}
		&\phi(|\mathbf{x}_i-\mathbf{x}_j|):=-\psi(|\mathbf{x}_i-\mathbf{x}_j|), \quad \forall~ i\neq j\in [N-1],\\
		&\phi(|\mathbf{x}_i-\mathbf{x}_N|):=\phi(|\mathbf{x}_N-\mathbf{x}_i|)=\sum_{j=1}^{N-1}\psi(|\mathbf{x}_i-\mathbf{x}_j|), \quad \forall~ i \in [N-1],\\
		&\phi(|\mathbf{x}_a-\mathbf{x}_a|):=\phi(0), \quad \forall~a \in [N].
	\end{aligned}
\end{align*}
We also define $(\zeta(|\mathbf{x}_a-\mathbf{x}_b|))$ in a similar manner. After we use the change of matrices, system \eqref{TCSmodel} becomes
\begin{align}\label{TCSmodel-1}
	\begin{aligned}
		&\frac{\mathrm{d}\mathbf{x}_a}{\mathrm{d} t}=\mathbf{v}_a,\quad a \in [N], \\
		&\frac{\mathrm{d}}{\mathrm{d} t}\left(\Gamma_a\mathbf{v}_a\left(1+\frac{p_a\Gamma_a+c_{V_a}T_a}{c^2}\right)\right)
		=\frac{1}{N}\sum_{b=1}^{N}\phi(|\mathbf{x}_a-\mathbf{x}_b|)\Big(\frac{\Gamma_b \mathbf{v}_{b}}{T_b}-\frac{\Gamma_a \mathbf{v}_{a}}{T_a}\Big),\\
		&\frac{\mathrm{d}}{\mathrm{d} t}\left[\left(p_a\left(\Gamma_a+1\right)+c^2\right)\left(\Gamma_a-1\right)+\Gamma_ac_{V_a}T_a
		\right]=\frac{1}{N}\sum_{b=1}^{N}\zeta(|\mathbf{x}_a-\mathbf{x}_b|)\Big(\frac{\Gamma_a }{T_a}-\frac{\Gamma_b }{T_b}\Big),
	\end{aligned}
\end{align}
subject to the initial data with the conditions \eqref{initialODE}. Note that
\begin{align*}
	\begin{aligned}
		&\lim_{c\rightarrow \infty} \Gamma_a=1,\qquad \lim_{c\rightarrow \infty} c^2(\Gamma_a-1)=\frac{v_a^2}{2}.
	\end{aligned}
\end{align*}
Then, the relations \eqref{TCSmodel-1} reduce to the classical TCS model proposed by Ha and Ruggeri \cite{Ha-Ruggeri-ARMA-2017}:
\begin{equation*}
	\begin{cases}
		&\displaystyle\frac{\mathrm{d}\mathbf{x}_a}{\mathrm{d} t}=\mathbf{v}_a,\quad a \in [N], \\
		&\displaystyle\frac{\mathrm{d} \mathbf{v}_a}{\mathrm{d} t}
		=\frac{1}{N}\sum_{b=1}^{N}\phi(|\mathbf{x}_a-\mathbf{x}_b|)\Big(\frac{ \mathbf{v}_{b}}{T_b}-\frac{\mathbf{v}_{a}}{T_a}\Big),\\
		&\displaystyle\frac{\mathrm{d}}{\mathrm{d} t}\left(\frac{v_a^2}{2}+c_{V_a}T_a\right)=\frac{1}{N}\sum_{b=1}^{N}\zeta(|\mathbf{x}_a-\mathbf{x}_b|)\Big(\frac{1 }{T_a}-\frac{1 }{T_b}\Big),
	\end{cases}
\end{equation*}
subject to the reduced form of initial data constrained by \eqref{initialODE}:
\begin{align*}
	\begin{aligned}
		&\sum_{a=1}^N\mathbf{v}_a(0)=0,\qquad \sum_{a=1}^N\left(\frac{v_a^2(0)}{2}+c_{V_a}T_a(0)\right)=c_{V}T_0.
	\end{aligned}
\end{align*}
\subsubsection{Mechanical ensemble} \label{sec:2.4}
In this subsection, we recall the RCS model from the RTCS model obtained in \cite{Ha-Kim-Ruggeri-ARMA-2020}.
According with the theory of principal sub-systems \cite{B-R} we need to take constant the \emph{multiplier} of the energy equation and this implies :
\begin{align}\label{TTa}
	\frac{T_a}{\Gamma_a}=T^*, \quad a \in [N].
\end{align}
where $T^*$ is constant.
Then we substitute \eqref{TTa} into \eqref{TCSmodel-1}, omit the last equation,  and we recall that 
\[ c_{V_a}=\frac{D_a}{2}\frac{k_B }{m_a}=\frac{D_a c^2}{2T^*\gamma^*_a} \]
with the dimensionless quantity  $D_a$ represents the degree of freedom of the gas  and we have put as usual in relativistic fluid the dimensionless constant
\[
\gamma^*_a = \frac{m_a c^2}{k_B T^*}.
\]
Then the system read
\begin{equation}\label{CSmodel}
	\begin{cases}
		&\displaystyle\frac{\mathrm{d}\mathbf{x}_a}{\mathrm{d} t}=\mathbf{v}_a,\qquad a \in [N], \\
		&\displaystyle\frac{\mathrm{d}}{\mathrm{d} t}\left(\Gamma_a\mathbf{v}_a\left(1+\frac{D_a+2}{2\gamma^*_a}\Gamma_a\right)\right)
		=\frac{1}{NT^*}\sum_{b=1}^{N}\phi(|\mathbf{x}_a-\mathbf{x}_b|)\left(\mathbf{v}_{b}-\mathbf{v}_{a}\right),
	\end{cases}
\end{equation}
subject to the initial data with the following condition
\begin{align*}
	\begin{aligned}
		&\sum_{a=1}^N\left(\Gamma_a(0)\mathbf{v}_a(0)\left(1+\frac{D_a+2}{2\gamma^*_a}\Gamma_a(0)\right)\right)=0.
	\end{aligned}
\end{align*}
Corresponding to \eqref{CSmodel}, as $c\rightarrow \infty$, the classical CS model reads:
\begin{equation}\label{CCSmodel}
	\begin{cases}
		&\displaystyle\frac{\mathrm{d}\mathbf{x}_a}{\mathrm{d} t}=\mathbf{v}_a,\quad a \in [N],\\
		&\displaystyle\frac{\mathrm{d}}{\mathrm{d} t}\mathbf{v}_a
		=\frac{1}{NT^*}\sum_{b=1}^{N}\phi(|\mathbf{x}_a-\mathbf{x}_b|)\left(\mathbf{v}_{b}-\mathbf{v}_{a}\right),
	\end{cases}
\end{equation}
subject to the initial data with the following condition:
\begin{align*}
	\begin{aligned}
		&\sum_{a=1}^N\mathbf{v}_a(0)=0.
	\end{aligned}
\end{align*}
In what follows, the communication weight function $\phi:\mathbb{R}_+\rightarrow\mathbb{R}_+$ is nonnegative, bounded Lipschitz continuous, and monotonically decreasing,
\begin{align*}
	\begin{aligned}
		&0\leq \phi(r)\leq \phi(0)=1,\qquad \phi(\cdot)\in \mathrm{Lip}(\mathbb{R}_+;\mathbb{R}_+),\\
		&(\phi(r_1)-\phi(r_2))(r_1-r_2)\leq0,\qquad \forall~ r, r_1, r_2\geq0.
	\end{aligned}
\end{align*}

\subsection{Main results} 
In this subsection, we briefly summaries our main results. 
To explain the aim of this paper, we first recall the concept of asymptotic flocking for the RCS model and the CS model. For this, we introduce two Lipschitz continuous functionals:~for time-dependent configuration $\{ ({\bf x}_a, {\bf w}_a) \}$: we set
\[
{\mathcal D}_{\bf x}:=  \max_{a, b \in [N]} |{\bf x}_{a}-{\bf x}_{b}|, \quad {\mathcal D}_{\bf w}:=  \max_{a, b \in [N]} |{\bf w}_{a}-{\bf w}_{b}|.
\]
Here and below, as in \cite{Ha-Kim-Ruggeri-ARMA-2020}, we introduce auxiliary observables:
\[
\mathbf{w}_a:=\Gamma_a\mathbf{v}_a\left(1+\frac{D_a+2}{2\gamma^*_a}\Gamma_a\right), \quad F_a:=\Gamma_a\left(1+\frac{D_a+2}{2\gamma^*_a}\Gamma_a\right),\quad a \in [N].
\]
\begin{definition} Let $\{({\bf x}_{a}, {\bf w}_{a})\}$ be a global solution to the RCS model. Then an asymptotic flocking emerges if and only if the following estimates hold:
\[
\sup_{0\leq t<\infty} {\mathcal D}_{\bf x}(t) <\infty,\quad \lim_{t \to \infty} {\mathcal D}_{\bf w}(t) = 0. 
\]
\end{definition}
In this paper, we revisit the following questions which have been discussed in earlier works:
\begin{itemize}
\item
(Q1):~Can we provide conditions for asymptotic flocking in terms of system functions and initial data?
\vspace{0.2cm}
\item
(Q2):~What is an optimal convergence rate in the uniform-time classical limit of the RCS models in terms of the speed of light?
\end{itemize}
Of course, the above two questions have been addressed in previous works \cite{Ahn-Ha-Kim-JMP-2022,Ahn-Ha-Kim-CPAA-2021,Ha-Kim-Ruggeri-ARMA-2020, Ha-Kim-Ruggeri-CMS-2021} with rather crude conditions and indirect arguments (see Remark \ref{R3.2} and Remark \ref{R3.3}).  \newline

The main results of this paper are four-fold: First, we present a sufficient framework leading to the asymptotic flocking of the RCS model \eqref{CSmodel} under a refined condition on initial data and system functions:
 \[
   \mathcal{D}_\mathbf{x}(0)+\frac{c^4T^*\mathcal{D}_{\mathbf{w}}(0)}{(c^2+1)[2c^2\phi(\mathcal{D}^{\infty}_{\mathbf{x}})-4\Lambda_2(\mathcal{D}_{\mathbf{w}}(0))]}
      <\mathcal{D}_\mathbf{x}^{\infty},
 \]
 for a positive constant $\mathcal{D}_\mathbf{x}^{\infty},$ and  let $\{(\mathbf{x}_{a}, \mathbf{w}_{a})\}$ be a global solution to \eqref{CSmodel-w}.  Then, there exists a positive constant $\lambda = \lambda( \mathcal{D}_\mathbf{w}(0), \mathcal{D}_{\mathbf{x}}^{\infty}, T^*)$  such that 
 \[ \sup_{0\leq t<\infty}\mathcal{D}_{\mathbf{x}}(t)<\mathcal{D}_{\mathbf{x}}^{\infty}, \quad  \mathcal{D}_\mathbf{w}(t)\leq \mathcal{D}_\mathbf{w}(0) \exp\left\{-\lambda t\right\}, \quad t > 0.
 \]
We refer to Theorem \ref{T3.1} for details and Remark \ref{R3.2} for the comparison with results in previous works. 

Second, we provide an optimal convergence speed in the uniform-time classical limit from the RCS model \eqref{CSmodel} to the CS model \eqref{CCSmodel} as $c \to \infty$. For a finite-time classical limit, the convergence speed can be shown at the order of ${\mathcal O}(c^{-4})$. However, the argument employed in the uniform-time classical limit \cite{Ha-Kim-Ruggeri-ARMA-2020} was indirect without any explicit convergence rate. In this work, we use a direct argument to derive an optimal convergence rate ${\mathcal O}(c^{-4})$ which is consistent with finite-time classical limit. To quantify the classical limit, we introduce a functional:
\[ \Delta^c(t):=\frac{1}{N}\sum_{a=1}^N\left(\left|\mathbf{x}_a(t)-\mathbf{x}^{\infty}_a(t)\right|^2
+\left|\mathbf{w}_a(t)-\mathbf{w}^{\infty}_a(t)\right|^2\right). \]
Under a suitable framework in terms of system parameters and initial data, we show that if $\Delta^c(0)$ satisfies
\[\Delta^c(0)\leq \mathcal{O}(c^{-4}),\]
which is weaker than the corresponding assumption $\Delta^c(0)=0$ in  \cite{Ha-Kim-Ruggeri-ARMA-2020},
then we have
\[
\sup_{0 \leq t < \infty} \Delta^c(t)\leq \mathcal{O}(c^{-4}).
\]
We refer to Theorem \ref{T3.2} and Remark \ref{R3.3} for details. 

Third, we consider flocking estimate for the kinetic RCS model \eqref{CSmodel} which can be obtained from the RCS model via the mean-field limit \cite{AH}. 
\begin{equation} \label{A-3}
\begin{cases}
\displaystyle \partial_t f+\frac{{\bf w}}{F}\cdot \nabla_{\bf x}f+\nabla_{\bf w}\cdot\left(L[f]f\right)=0,\qquad (t,{\bf x},{\bf w})\in \mathbb{R}_+\times\mathbb{R}^6, \\
\displaystyle L[f](t,{\bf x},{\bf w}):=-\int_{\mathbb{R}^6}\phi(|{\bf x}_*-{\bf x}|)\left(\frac{{\bf w}}{F}-\frac{{\bf w}_*}{F_*}\right)f(t, {\bf x}_*, {\bf w}_*)\, \mathrm{d} {\bf x}_*\mathrm{d} {\bf w}_*, \\
\displaystyle f \Big|_{t=0_+}=f^{\mathrm{in}},
\end{cases}
\end{equation} 
where $f\equiv f(t,{\bf x},{\bf w})$ is the one-particle distribution function, ${\bf w}/F$ is the microscopic  velocity and 
$$F\equiv F (w^2)=\left(1+\frac{D+2}{2\gamma^*}\Gamma\right)\Gamma.$$ 

Suppose that $f$ tends to $f^{\infty}$ in a suitable weak sense as $c\rightarrow \infty$. Then, the limit $f^{\infty}$ satisfies the kinetic CS model:
\begin{equation}\label{A-4}
\begin{cases}
\displaystyle \partial_t f^{\infty}+{\bf w}\cdot \nabla_{\bf x}f^{\infty}+\nabla_{\bf w}\cdot\left(L[f^{\infty}]f^{\infty}\right)=0,\qquad (t,{\bf x},{\bf w})\in \mathbb{R}_+\times\mathbb{R}^6, \\
\displaystyle L[f^{\infty}](t,{\bf x},{\bf w}):=-\int_{\mathbb{R}^6}\phi(|{\bf x}_*-{\bf x}|)({\bf w}-{\bf w}_*)f^{\infty}(t, {\bf x}_*, {\bf w}_*)\, \mathrm{d} {\bf x}_*\mathrm{d} {\bf w}_*, \\
\displaystyle f^{\infty} \Big |_{t=0_+}=f^{\infty,\mathrm{in}}.
\end{cases}
\end{equation}
Next, we briefly discuss a sufficient framework for the men-field limit for the kinetic RCS model. Suppose that the initial datum $f^{\mathrm{in}}$ is compactly supported and satisfies 
\begin{align}\label{inDxw}
\begin{aligned}
   &\int_{\mathbb{R}^6}f^{\mathrm{in}}\, \mathrm{d}{\bf  x}\mathrm{d} {\bf w}=1,\quad\int_{\mathbb{R}^6}{\bf w}f^{\mathrm{in}}\, \mathrm{d}{\bf  x}\mathrm{d} {\bf w}=0, \\
   & \mathcal{D}_{\bf x}(0)+\frac{c^4\mathcal{D}_{\bf w}(0)}{(c^2+1)[c^2\phi(\mathcal{D}_{\bf x}^{\mathrm{\infty}})-2\Lambda_2(D_\mathbf{w}(0))]}
      <\mathcal{D}_{\bf x}^{\mathrm{\infty}}.
      \end{aligned}
 \end{align} 
 Under the improved conditions \eqref{inDxw}, we have
 \[ \sup_{0\leq t<\infty}\mathcal{D}_{\bf x}(t)<\mathcal{D}_{\bf x}^{\infty},\qquad \mathcal{D}_{\bf w}(t)\leq \mathcal{D}_{\bf w}(0) \exp\left\{-\left(\phi(\mathcal{D}_{\bf x}^{\infty})-\frac{2\Lambda_2(D_\mathbf{w}(0))}{c^2}\right) t\right\}. \]
 See Theorem \ref{T4.1} and Remark \ref{R4.1} for details. \newline
 
Fourth, we consider the uniform-time classical limit from  kinetic RCS model \eqref{A-3} to kinetic CS model \eqref{A-4} as $c \to \infty$. In this case, we suppose that the initial probability Radon measure $\mu_0$ satisfies the conditions in Proposition \ref{KCS-lim}, and  let $\mu_t$ and $\mu_t^{\infty}$ be measure-valued solutions to Cauchy problems to \eqref{A-3} and \eqref{A-4}  with the compactly supported initial measures $\mu_0$ and $\mu_0^{\infty}$ satisfying
$$    W_1(\mu_0, \mu^{\infty}_0) \leq \mathcal{O}(c^{-2}).$$
    Then, we derive the following quantitative uniform-time classical limit: 
\[
\sup_{0 \leq t < \infty} W_1(\mu_t, \mu^{\infty}_t) \leq \mathcal{O}(c^{-2}),
\]
where $W_1(\cdot, \cdot)$ is the 1-Wasserstein metric.  We refer to Theorem \ref{T4.2} and Remark \ref{R4.2} for details.

\section{Qualitative analysis for RCS model} \label{sec:3}
\setcounter{equation}{0}
In this section, we derive several quantitative estimates on flocking dynamics and uniform-time  classical limit ($c\rightarrow \infty$) from the RCS model \eqref{CSmodel} to the classical CS model. Via a novel direct proof, we obtain classical limit with an optimal convergence rate.

%
%
\subsection{Asymptotic flocking dynamics}\label{sec:3.1}
In this subsection, we study asymptotic flocking estimate. 
We study an emergent flocking dynamics to the RCS model \eqref{CSmodel}.

System \eqref{CSmodel} and initial condition can be written as
\begin{equation}
\begin{cases} \label{CSmodel-w}
\displaystyle \frac{\mathrm{d}\mathbf{x}_a}{\mathrm{d} t}=\frac{\mathbf{w}_a}{F_a},\quad a \in [N],\\
\displaystyle \frac{\mathrm{d}\mathbf{w}_a}{\mathrm{d} t}
=\frac{1}{NT^*}\sum_{b=1}^{N}\phi(|\mathbf{x}_a-\mathbf{x}_b|)\left(\frac{\mathbf{w}_b}{F_b}-\frac{\mathbf{w}_a}{F_a}\right), \\
\displaystyle \sum_{a=1}^N\mathbf{w}_a(0)=0. 
\end{cases}
\end{equation}
Note that the zero sum condition $\eqref{CSmodel-w}_3$ will be propagated along $\eqref{CSmodel-w}_3$:
\begin{align}\label{initialODE-sub-wt}
\begin{aligned}
&\sum_{a=1}^N\mathbf{w}_a(t)=0,\qquad \forall~t\geq0. 
\end{aligned}
\end{align}

In what follows, we study several preparatory estimates for asymptotic flocking estimates. Note that
\begin{align*}
&\Gamma:=\frac{1}{\sqrt{1-\frac{v^2}{c^2}}}, \quad \mathbf{w}:=\mathbf{v}\left(1+\frac{D+2}{2\gamma^*}\Gamma\right)\Gamma,\quad v=|\mathbf{v}|,\quad w=|\mathbf{w}|,\\
&v^2\Gamma^2=c^2(\Gamma^2-1),\quad w^2=c^2(\Gamma^2-1)\left(1+\frac{D+2}{2\gamma^*}\Gamma\right)^2.
\end{align*}
We set 
\begin{align*}
F(w^2) :=\left(1+\frac{D+2}{2\gamma^*}\Gamma\right)\Gamma.
\end{align*}
Before we move on to quantitative estimates on flocking dynamics, we provide a new proof of an important estimate corresponding to that in \cite[Lemma 6.6]{Ha-Kim-Ruggeri-ARMA-2020}. 
\begin{lemma}\label{coer-F}
Let $\mathbf{x}$ and $\mathbf{y}$ be two vectors in $\mathbb{R}^3$ such that 
\begin{align*}
\left|\mathbf{x}\right|\leq W_0,\quad \left|\mathbf{y}\right|\leq W_0.
\end{align*}
Then, there exists a positive constant $\Lambda_0=\Lambda_0(W_0)$ depending on R such that
\begin{align}\label{coer-xy}
\left(\mathbf{x}-\mathbf{y}\right)\cdot\left(\frac{\mathbf{x}}{F(x^2)}-\frac{\mathbf{y}}{F(y^2)}\right)\geq \Lambda_0(W_0)\left|\mathbf{x}-\mathbf{y}\right|^2.
\end{align}
\end{lemma}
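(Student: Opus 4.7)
The plan is to recognize that $G(\mathbf{z}) := \mathbf{z}/F(|\mathbf{z}|^2)$ is precisely the physical momentum-to-velocity map $\mathbf{w}\mapsto \mathbf{v}$ — indeed $|G(\mathbf{z})|^2 = |\mathbf{z}|^2/F^2 = c^2(1-\Gamma^{-2}) < c^2$ — and to prove the coercivity through a mean value argument based on a uniform positive-definiteness of $\nabla G$ on the ball $\{|\mathbf{z}|\leq W_0\}$. Since this ball is convex, the segment $\mathbf{z}_s := (1-s)\mathbf{y} + s\mathbf{x}$ stays inside it, so once a uniform bound $\lambda_{\min}(\nabla G(\mathbf{z}))\geq \Lambda_0(W_0) > 0$ is established on this ball, \eqref{coer-xy} follows at once from the identity
\[
(\mathbf{x}-\mathbf{y})\cdot(G(\mathbf{x})-G(\mathbf{y})) = \int_0^1 (\mathbf{x}-\mathbf{y})^T \nabla G(\mathbf{z}_s)(\mathbf{x}-\mathbf{y})\,\mathrm{d} s \geq \Lambda_0\,|\mathbf{x}-\mathbf{y}|^2.
\]

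Next, I would differentiate directly: setting $a := (D+2)/(2\gamma^*)$ so that $F = (1+a\Gamma)\Gamma$ with $\Gamma$ determined by $|\mathbf{z}|^2 = c^2(\Gamma^2-1)(1+a\Gamma)^2$, one finds
\[
\mathbf{u}^T \nabla G(\mathbf{z})\mathbf{u} = \frac{|\mathbf{u}|^2}{F(|\mathbf{z}|^2)} - \frac{2F'(|\mathbf{z}|^2)(\mathbf{z}\cdot\mathbf{u})^2}{F(|\mathbf{z}|^2)^2}.
\]
Because $\Gamma$ is increasing in $|\mathbf{z}|$ and $F$ is increasing in $\Gamma$, one has $F'\geq 0$; hence by Cauchy--Schwarz the minimum over unit $\mathbf{u}$ is attained in the radial direction, giving $(F - 2|\mathbf{z}|^2 F')/F^2$. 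An equivalent and perhaps cleaner route is to compute the forward Jacobian $\nabla_{\mathbf{v}}\mathbf{w} = F\,I + c^{-2}\Gamma^3(1+2a\Gamma)\,\mathbf{v}\,\mathbf{v}^T$, which is manifestly positive definite, and then use $\nabla G = (\nabla_{\mathbf{v}}\mathbf{w})^{-1}$.

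The main algebraic step, which I expect to be the only non-routine obstacle, is to execute the chain rule $F'(|\mathbf{z}|^2) = (dF/d\Gamma)/(d|\mathbf{z}|^2/d\Gamma)$ and verify the cancellation
\[
F - 2|\mathbf{z}|^2 F' = \frac{(1+a\Gamma)^2}{\Gamma + 2a\Gamma^2 - a}, \qquad \text{so} \qquad \lambda_{\min}(\nabla G(\mathbf{z})) = \frac{1}{\Gamma^2(\Gamma + 2a\Gamma^2 - a)} > 0,
\]
the positivity holding because $\Gamma \geq 1$ and $a > 0$ force $\Gamma + 2a\Gamma^2 - a \geq 1 + a > 0$. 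Finally, on $\{|\mathbf{z}|\leq W_0\}$ one has $\Gamma^2 - 1 \leq W_0^2/c^2$, i.e.~$\Gamma \leq \Gamma_{\max} := \sqrt{1 + W_0^2/c^2}$, so the constant $\Lambda_0(W_0) := 1/[\Gamma_{\max}^2(\Gamma_{\max} + 2a\Gamma_{\max}^2 - a)]$ provides the desired uniform lower bound and closes the proof. Once the algebraic identity is in hand, the convexity of the ball and the mean value theorem do all the remaining work.
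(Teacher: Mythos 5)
Your proof is correct and follows the same route as the paper: compute the Jacobian of $\mathbf{z}\mapsto\mathbf{z}/F(z^2)$, identify its smallest eigenvalue as $(F(z^2)-2z^2F'(z^2))/F(z^2)^2$, and deduce \eqref{coer-xy} from uniform positivity of that eigenvalue on the ball $\{|\mathbf{z}|\leq W_0\}$ via the convexity/mean-value identity. One genuine improvement over the paper's algebra: you observe the exact cancellation $F-2z^2F'=\frac{(1+a\Gamma)^2}{\Gamma+2a\Gamma^2-a}$ (equivalently, the clean formula $\lambda_{\min}(\nabla G)=\frac{1}{\Gamma^2(\Gamma+2a\Gamma^2-a)}$, cross-checked by your $\nabla_{\mathbf{v}}\mathbf{w}$ inversion), whereas the paper drops factors and only obtains the weaker lower bound $\frac{1}{\Gamma+2a\Gamma^2-a}$ before appealing to the $c\to\infty$ limits; your version therefore produces an explicit, sharp $\Lambda_0(W_0)$ directly.
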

\begin{proof} For $\mathbf{z}=(z_1,z_2,z_3)\in \mathbb{R}^3$ such that 
\[ z^2=\left|\mathbf{z}\right|^2\leq W_0^2, \]
we denote the matrix $A=A(\mathbf{z})$ as
 $$A(\mathbf{z}):=\nabla_\mathbf{z} \left(\frac{\mathbf{z}}{F(z^2)}\right)= \begin{pmatrix}
\frac{1}{F(z^2)}-\frac{2z_1^2F'(z^2)}{F^2(z^2)} & -\frac{2z_1z_2F'(z^2)}{F^2(z^2)}& -\frac{2z_1z_3F'(z^2)}{F^2(z^2)}\\
-\frac{2z_1z_2F'(z^2)}{F^2(z^2)} & \frac{1}{F(z^2)}-\frac{2z_2^2F'(z^2)}{F^2(z^2)}  & -\frac{2z_2z_3F'(z^2)}{F^2(z^2)} \\
-\frac{2z_1z_3F'(z^2)}{F^2(z^2)} & -\frac{2z_2z_3F'(z^2)}{F^2(z^2)} & \frac{1}{F(z^2)}-\frac{2z_3^2F'(z^2)}{F^2(z^2)} 
\end{pmatrix}. $$
By simple computations, we have the following eigenvalues of $A(z)$:
$$\lambda_3=\lambda_2=\frac{1}{F(z^2)}>\lambda_1= \frac{1}{F(z^2)}-\frac{2z^2F'(z^2)}{F^2(z^2)}.$$                                                         
Then, in order to verify \eqref{coer-xy}, it suffices to show that there exists a positive constant $\Lambda_0(W_0)$ such that 
$$\frac{1}{F(z^2)}-\frac{2z^2F'(z^2)}{F^2(z^2)}=\frac{F(z^2)-2z^2F'(z^2)}{F^2(z^2)}\geq \Lambda_0(W_0)>0,\quad z\leq W_0.$$ 
We first need to show 
$$F(z^2)-2z^2F'(z^2)>0, \quad z\leq W_0. $$
Note that 
\[ F'(z^2)=\left(1+\frac{D+2}{\gamma^*}\Gamma\right)\partial_{z^2}\Gamma, \quad 1=2c^2\left(1+\frac{D+2}{2\gamma^*}\Gamma\right)\left(\frac{D+2}{\gamma^*}\Gamma^2+\Gamma-\frac{D+2}{2\gamma^*}\right)\partial_{z^2}\Gamma.
\]
Then we have
\begin{align*}
   &F'(z^2)=\frac{1+\frac{D+2}{\gamma^*}\Gamma}
   {2c^2\left(1+\frac{D+2}{2\gamma^*}\Gamma\right)\left(\frac{D+2}{\gamma^*}\Gamma^2+\Gamma-\frac{D+2}{2\gamma^*}\right)}.
\end{align*}
For $z^2=c^2(\Gamma^2-1)\left(1+\frac{D+2}{2\gamma^*}\Gamma\right)^2\leq W_0^2$, we have
\begin{align*}
&F(z^2)-2z^2F'(z^2)\\
&\quad=\left(1+\frac{D+2}{2\gamma^*}\Gamma\right)\Gamma-
\frac{(\Gamma^2-1)\left(1+\frac{D+2}{2\gamma^*}\Gamma\right)\left(1+\frac{D+2}{\gamma^*}\Gamma\right)}
   {\frac{D+2}{\gamma^*}\Gamma^2+\Gamma-\frac{D+2}{2\gamma^*}}\\
   &\quad> \frac{\Gamma\left(\frac{D+2}{\gamma^*}\Gamma^2+\Gamma-\frac{D+2}{2\gamma^*}\right)-(\Gamma^2-1)\left(1+\frac{D+2}{\gamma^*}\Gamma\right)}
   {\frac{D+2}{\gamma^*}\Gamma^2+\Gamma-\frac{D+2}{2\gamma^*}}\\
   &\quad>\frac{1}
   {   \frac{D+2}{\gamma^*}\Gamma^2+\Gamma-\frac{D+2}{2\gamma^*}}>0.
\end{align*}
Note that 
$$\lim_{c\rightarrow\infty}F(z^2)=\lim_{c\rightarrow\infty}\left(1+\frac{D+2}{2\gamma^*}\Gamma\right)\Gamma=1, \qquad \lim_{c\rightarrow\infty}\frac{1}
   {   \frac{D+2}{\gamma^*}\Gamma^2+\Gamma-\frac{D+2}{2\gamma^*}}=1. $$
Then, there exists a positive constant $\Lambda_0(W_0)$ independent of $c$ such that the relation \eqref{coer-xy} holds.
\end{proof}
\begin{remark}
In the proof of Lemma \ref{coer-F}, one can see 
$$\Lambda_0(W_0)\leq 1,\qquad \lim_{c\rightarrow\infty}\Lambda_0(W_0)=1.$$
\end{remark}

Note that diameter functionals $\mathcal{D}_\mathbf{x}$ and $\mathcal{D}_\mathbf{w}$ are Lipschitz continuous and hence they are almost everywhere differentiable.  In the following lemma, we show that diameters $\mathcal{D}_\mathbf{x}$ and $\mathcal{D}_\mathbf{w}$ satisfy the ``system of dissipative differential inequalities (SDDI)".

\begin{lemma}\label{lemma-SDDI} 
Let $\{(\mathbf{x}_{a}, \mathbf{w}_{a})\}$ be a global solution to \eqref{CSmodel-w} with compactly supported initial data. Then, the diameters $(\mathcal{D}_\mathbf{x}, \mathcal{D}_\mathbf{w})$ satisfy 
\begin{equation}
\begin{cases} \label{SDDI-r}
\displaystyle \frac{\mathrm{d}\mathcal{D}_\mathbf{x}}{\mathrm{d} t}\leq\frac{c^2}{c^2+1}\mathcal{D}_\mathbf{w}, \vspace{0.2cm}\\
\displaystyle \frac{\mathrm{d}\mathcal{D}_\mathbf{w}}{\mathrm{d} t}\leq -\left(\frac{\phi(\mathcal{D}_{\mathbf{x}})}{T^*}-\frac{2\Lambda_2(\mathcal{D}_{\mathbf{w}})}{c^{2}T^*}\right)\mathcal{D}_\mathbf{w}.
\end{cases}
\end{equation}
\end{lemma}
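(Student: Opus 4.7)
I would use the Lipschitz differentiability of $\mathcal{D}_\mathbf{x}$ and $\mathcal{D}_\mathbf{w}$ and work at almost every $t$ with index pairs realising the maxima. Pick $p=p(t)$, $q=q(t)$ with $\mathcal{D}_\mathbf{x}(t)=|\mathbf{x}_p-\mathbf{x}_q|$, and $r=r(t)$, $s=s(t)$ with $\mathcal{D}_\mathbf{w}(t)=|\mathbf{w}_r-\mathbf{w}_s|$. The overall strategy is the classical dissipative max-difference technique, with two relativistic ingredients: (i) the transport speed in $\eqref{CSmodel-w}_1$ is $\mathbf{w}_a/F_a$ rather than $\mathbf{w}_a$, producing the factor $c^2/(c^2+1)$ via a pointwise lower bound on $F_a$; (ii) the alignment in $\eqref{CSmodel-w}_2$ is driven by $\mathbf{w}_b/F_b$, so the relativistic discrepancy $\mathbf{w}_b/F_b-\mathbf{w}_b=\mathcal{O}(c^{-2})$ generates the $\Lambda_2(\mathcal{D}_\mathbf{w})/c^2$ correction, while Lemma \ref{coer-F} supplies the principal dissipation.

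\textbf{First inequality.} At a.e.\ $t$, using $\eqref{CSmodel-w}_1$,
\begin{equation*}
\frac{\mathrm{d}\mathcal{D}_\mathbf{x}}{\mathrm{d} t} \;\leq\; \left|\frac{\mathbf{w}_p}{F_p}-\frac{\mathbf{w}_q}{F_q}\right|.
\end{equation*}
By the mean value theorem along the segment from $\mathbf{w}_q$ to $\mathbf{w}_p$, the right-hand side is bounded by the operator norm of $\nabla_\mathbf{w}(\mathbf{w}/F(w^2))$, whose largest eigenvalue is $1/F$. The bound $F_a=\Gamma_a\bigl(1+\tfrac{D_a+2}{2\gamma_a^*}\Gamma_a\bigr)\geq 1+\tfrac{D_a+2}{2\gamma_a^*}$, together with the $c^{-2}$ scaling of $\tfrac{D_a+2}{2\gamma_a^*}$ (since $\gamma_a^*\propto c^2$), yields the contraction factor $c^2/(c^2+1)$; combined with $|\mathbf{w}_p-\mathbf{w}_q|\leq\mathcal{D}_\mathbf{w}$ this gives $\eqref{SDDI-r}_1$.

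\textbf{Second inequality.} Differentiating $\tfrac12|\mathbf{w}_r-\mathbf{w}_s|^2$ and substituting $\eqref{CSmodel-w}_2$ produces two sums indexed by $b$. Isolating the terms $b=s$ in the $r$-sum and $b=r$ in the $s$-sum yields the principal dissipation
\begin{equation*}
-\frac{2\phi(|\mathbf{x}_r-\mathbf{x}_s|)}{NT^*}\,(\mathbf{w}_r-\mathbf{w}_s)\cdot\Big(\frac{\mathbf{w}_r}{F_r}-\frac{\mathbf{w}_s}{F_s}\Big),
\end{equation*}
to which I would apply Lemma \ref{coer-F} with $W_0$ a uniform bound on $|\mathbf{w}_a|$ secured by the zero-momentum constraint \eqref{initialODE-sub-wt} and a bootstrap on $\mathcal{D}_\mathbf{w}$; this converts the principal dissipation into a term of size $\phi(\mathcal{D}_\mathbf{x})|\mathbf{w}_r-\mathbf{w}_s|^2/T^*$. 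For the cross terms ($b\neq r,s$) I would use the decomposition
\begin{equation*}
\frac{\mathbf{w}_b}{F_b}-\frac{\mathbf{w}_r}{F_r} \;=\; (\mathbf{w}_b-\mathbf{w}_r) \;+\; \Big[\mathbf{w}_b\Big(\tfrac{1}{F_b}-1\Big)-\mathbf{w}_r\Big(\tfrac{1}{F_r}-1\Big)\Big].
\end{equation*}
The first piece paired with $\mathbf{w}_r-\mathbf{w}_s$ is non-positive by maximality of $|\mathbf{w}_r-\mathbf{w}_s|$, and the bracketed remainder is $\mathcal{O}(|\mathbf{w}|/c^2)$ since $1-1/F=(F-1)/F=\mathcal{O}(c^{-2})$ on the $\mathcal{D}_\mathbf{w}(0)$-ball; the same reasoning handles the $s$-centred sum.

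\textbf{Main obstacle.} The hard step is the cross-term bookkeeping: tracking constants so that the aggregated relativistic remainders assemble into exactly $\tfrac{2\Lambda_2(\mathcal{D}_\mathbf{w})}{c^2T^*}\mathcal{D}_\mathbf{w}^2$ with the stated functional dependence $\Lambda_2(\mathcal{D}_\mathbf{w})$, uniformly in $N$ and without degrading the leading coefficient $\phi(\mathcal{D}_\mathbf{x})/T^*$. This requires an explicit expansion of $F_a$ in inverse powers of $c^2$ on a compact region of $\mathbf{w}$-space, together with verifying that the a priori bound $|\mathbf{w}_a|\leq W_0$ used in Lemma \ref{coer-F} is indeed a consequence of the very inequality \eqref{SDDI-r} being proved, closing the argument as a bootstrap. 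Dividing the resulting upper bound by $|\mathbf{w}_r-\mathbf{w}_s|=\mathcal{D}_\mathbf{w}$ then yields $\eqref{SDDI-r}_2$.
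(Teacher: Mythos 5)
Your treatment of the first inequality is essentially fine (the paper simply cites a corollary from an earlier work for $|\mathbf{v}_M-\mathbf{v}_m|\le \tfrac{c^2}{c^2+1}|\mathbf{w}_M-\mathbf{w}_m|$, whereas you sketch a Jacobian/eigenvalue re-derivation; the exact constant $c^2/(c^2+1)$ requires the normalization hidden in that corollary, but the idea is sound). The second inequality, however, contains a genuine gap in how you extract the principal dissipation.

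You propose isolating the two terms $b=s$ (in the $r$-sum) and $b=r$ (in the $s$-sum), producing
\[
-\frac{2\phi(|\mathbf{x}_r-\mathbf{x}_s|)}{NT^*}\,(\mathbf{w}_r-\mathbf{w}_s)\cdot\Big(\tfrac{\mathbf{w}_r}{F_r}-\tfrac{\mathbf{w}_s}{F_s}\Big),
\]
and applying Lemma~\ref{coer-F} to it. But that yields at best $-\tfrac{2\Lambda_0\,\phi(\mathcal{D}_\mathbf{x})}{NT^*}\mathcal{D}_\mathbf{w}^2$, with the factor $1/N$ still present and an extra factor $\Lambda_0<1$. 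Discarding the remaining ``non-positive'' classical cross terms for $b\neq r,s$, as you suggest, throws away precisely the $N-2$ additional contributions that are needed to remove the $1/N$. The result is a principal dissipation coefficient that vanishes as $N\to\infty$, while the $\mathcal{O}(c^{-2})$ remainder you keep does not shrink with $N$; the stated inequality $\eqref{SDDI-r}_2$, whose leading coefficient is $\phi(\mathcal{D}_\mathbf{x})/T^*$ uniformly in $N$, does not follow. The paper's argument instead applies the decomposition $\tfrac{\mathbf{w}_b}{F_b}=\mathbf{w}_b+(\tfrac1{F_b}-1)\mathbf{w}_b$ to \emph{every} $b$ and then uses the maximality inequalities $(\mathbf{w}_M-\mathbf{w}_m)\cdot(\mathbf{w}_b-\mathbf{w}_M)\le 0$, $(\mathbf{w}_M-\mathbf{w}_m)\cdot(\mathbf{w}_b-\mathbf{w}_m)\ge 0$ on the \emph{entire} sum: the $N$ terms then telescope against the prefactor $1/N$ to give the full $-\tfrac{\phi(\mathcal{D}_\mathbf{x})}{T^*}\mathcal{D}_\mathbf{w}^2$, with no $\Lambda_0$ loss. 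Lemma~\ref{coer-F} is not needed for this lemma at all; only the Jacobian bound \eqref{I21} from its proof (applied to the relativistic remainder) is used, and the coercivity of Lemma~\ref{coer-F} is reserved for the $\ell^2$ estimate in Corollary~\ref{C3.1}. To fix your argument, replace the isolation of $b=r,s$ with the standard Cucker--Smale maximality trick on the full sum, and reserve Lemma~\ref{coer-F} for where it is actually required.
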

\begin{proof} For given $t \in (0, \infty)$, we choose time-dependent extremal pairs of indices $(M_t, m_t)$ such that 
\begin{equation} \label{NN-1}
\mathcal{D}_\mathbf{x}(t):=\left|\mathbf{x}_{M_t}(t)-\mathbf{x}_{m_t}(t)\right|.
\end{equation}

From now on, as long as there is no confusion, we suppress $t$ dependence in the above extremal indices. \newline

\noindent  $\bullet$ (Derivation of $\eqref{SDDI-r}_1$):~It follows from the definition of $\mathcal{D}_\mathbf{x}$ that 
  \begin{equation*}
    \frac{\mathrm{d}\mathcal{D}^2_\mathbf{x}}{\mathrm{d} t}=2\mathcal{D}_\mathbf{x}\frac{\mathrm{d}\mathcal{D}_\mathbf{x}}{\mathrm{d} t}
    =2\mathcal{D}_\mathbf{x}\frac{\mathrm{d}\left|\mathbf{x}_M-\mathbf{x}_m\right|}{\mathrm{d} t}\leq 2\mathcal{D}_\mathbf{x} \left|\mathbf{v}_{M}-\mathbf{v}_{m} \right|.
  \end{equation*}
  According to \cite[Corollary $2.1$]{Ahn-Ha-Kim-JMP-2022}, it holds that
  \begin{equation*}
     \left|\mathbf{v}_M-\mathbf{v}_m\right|\leq \frac{c^2}{c^2+1} \left|\mathbf{w}_M-\mathbf{w}_m\right| \leq \frac{c^2}{c^2+1}\mathcal{D}_\mathbf{w}.
  \end{equation*}
The above two inequalities yield the  first inequality in \eqref{SDDI-r}. \newline
  
\noindent $\bullet$  (Derivation of $\eqref{SDDI-r}_2$):~ Similar to \eqref{NN-1}, we choose extremal indices $M$ and $m$ such that
 $$\left|\mathbf{w}_M(t)-\mathbf{w}_m(t)\right|=\max_{1\leq a,b\leq N}\left|\mathbf{w}_a(t)-\mathbf{w}_b(t)\right|.$$ 
 Although this pair $(M, m)$ might be different from \eqref{NN-1}, but we still use the same notation. Then we use $\eqref{CSmodel-w}_2$ to see
 \begin{align*}
 \begin{aligned}
\mathcal{D}_\mathbf{w}\frac{\mathrm{d}\mathcal{D}_\mathbf{w}}{\mathrm{d} t} &=\frac{1}{2}\frac{\mathrm{d}\mathcal{D}_\mathbf{w}^2}{\mathrm{d} t}=\left(\mathbf{w}_M-\mathbf{w}_m\right)\cdot\frac{\mathrm{d}}{\mathrm{d} t}\left(\mathbf{w}_M-\mathbf{w}_m\right) \\
&=\left(\mathbf{w}_M-\mathbf{w}_m\right)\cdot
\frac{1}{NT^*} \\
& \times \sum_{b=1}^{N}\left[\phi(|\mathbf{x}_M-\mathbf{x}_b|)\left(\frac{\mathbf{w}_b}{F_b}-\frac{\mathbf{w}_M}{F_M}\right)
-\phi(|\mathbf{x}_m-\mathbf{x}_b|)\left(\frac{\mathbf{w}_b}{F_b}-\frac{\mathbf{w}_m}{F_m}\right)\right ]  \\
&=
\frac{\left(\mathbf{w}_M-\mathbf{w}_m\right)\cdot}{NT^*}\sum_{b=1}^{N}\Big[\phi(|\mathbf{x}_M-\mathbf{x}_b|) \left(\mathbf{w}_b-\mathbf{w}_M\right)
-\phi(|\mathbf{x}_m-\mathbf{x}_b|)
\left(\mathbf{w}_b-\mathbf{w}_m\right)\\
& +\phi(|\mathbf{x}_M-\mathbf{x}_b|)\big[\big(\frac{1}{F_b}-1\big)\mathbf{w}_b
-\big(\frac{1}{F_M}-1\big)\mathbf{w}_M\big]\\
&-\phi(|\mathbf{x}_m-\mathbf{x}_b|)\big[\big(\frac{1}{F_b}-1\big)\mathbf{w}_b
-\big(\frac{1}{F_M}-1\big)\mathbf{w}_m\big]\Big]\\
&\leq -\frac{\phi(\mathcal{D}_{\mathbf{x}})}{T^*}\mathcal{D}_\mathbf{w}^2+\frac{D_\mathbf{w}}{NT^*} 
\sum_{b=1}^{N}\left(\left|\nabla_{\mathbf{z}_1}\left[\left(\frac{1}{F(z^2_1)}-1\right)\mathbf{z}_1\right]
\right|\left|\mathbf{w}_M-\mathbf{w}_b\right|\right)\\
&+\frac{D_\mathbf{w}}{NT^*} 
\sum_{b=1}^{N}\left(\left|\nabla_{\mathbf{z}_2}\left[\left(\frac{1}{F(z^2_2)}-1\right)\mathbf{z}_2\right]
\right|\left|\mathbf{w}_b-\mathbf{w}_m\right|\right),
\end{aligned}
\end{align*}
where we used 
$$ \left(\mathbf{w}_M-\mathbf{w}_m\right)\cdot \left(\mathbf{w}_b-\mathbf{w}_M\right)\leq 0,\quad\left(\mathbf{w}_M-\mathbf{w}_m\right)\cdot \left(\mathbf{w}_b-\mathbf{w}_m\right)\geq 0.$$ 
Here
$$\mathbf{z}_1=\tau_1 \mathbf{w}_M+(1-\tau_1)\mathbf{w}_b, \quad \mathbf{z}_2=\tau_2 \mathbf{w}_b+(1-\tau_2)\mathbf{w}_m$$ for some $\tau_1, \tau_2\in(0,1)$.
 Note that  the conservation of the relativistic momentum in \eqref{initialODE-sub-wt} implies
\begin{equation} \label{wD-relation}
|\mathbf{w}_b|=\left|\mathbf{w}_b-\frac{1}{N}\sum_{a=1}^N\mathbf{w}_a\right|\leq \mathcal{D}_\mathbf{w}, \quad \left|\mathbf{z}_i\right|\leq \mathcal{D}_{\mathbf{w}}
\end{equation}
for $i=1, 2$. Then, it follows from the proof of Lemma \ref{coer-F} that
 \begin{align} 
 \begin{aligned} \label{I21}
 &\left|\nabla_{\mathbf{z}_i}\left[\left(\frac{1}{F(z^2_i)}-1\right)\mathbf{z}_i\right]
\right|\leq  \left|\frac{1}{F(z^2_i)}-1\right|+2z^2_i\frac{F'(z^2_i)}{F^2(z^2_i)} \\
&\hspace{1cm} \leq \frac{\Gamma-1+\frac{D+2}{2\gamma^*}\Gamma^2}{\left(1+\frac{D+2}{2\gamma^*}\Gamma\right)\Gamma}
+\frac{(\Gamma^2-1)\left(1+\frac{D+2}{\gamma^*}\Gamma\right)}
   {\left(1+\frac{D+2}{2\gamma^*}\Gamma\right)
   \left(\frac{D+2}{\gamma^*}\Gamma^2+\Gamma-\frac{D+2}{2\gamma^*}\right)\Gamma^2}\\
   &\hspace{1cm} \leq \frac{\Gamma^2-1}{\Gamma+1}+\frac{(D+2)k_BT^*}{2mc^2}\Gamma^2
   +(\Gamma^2-1)\left(1+\frac{(D+2)k_BT^*}{mc^2}\Gamma\right) \\
   &\hspace{1cm} \leq \frac{\Lambda_2(D_{\mathbf{w}})}{c^{2}}. 
\end{aligned}
 \end{align}
Now we can further use \eqref{I21} to obtain
 \begin{align*}
\frac{1}{2}\frac{\mathrm{d}\mathcal{D}_\mathbf{w}^2}{\mathrm{d} t} &=\left(\mathbf{w}_M-\mathbf{w}_m\right)\cdot\frac{\mathrm{d}}{\mathrm{d} t}\left(\mathbf{w}_M-\mathbf{w}_m\right)\nonumber\\
&\leq -\frac{\phi(\mathcal{D}_{\mathbf{x}})}{T^*}\mathcal{D}_\mathbf{w}^2+\frac{\mathcal{D}_\mathbf{w}}{NT^*} 
\sum_{b=1}^{N}\frac{\Lambda_2(\mathcal{D}_{\mathbf{w}})}{c^{2}}\left(\left|\mathbf{w}_M-\mathbf{w}_b\right|
+\left|\mathbf{w}_b-\mathbf{w}_m\right|\right)\\
&\leq \left(-\frac{\phi(\mathcal{D}_{\mathbf{x}})}{T^*}-\frac{2\Lambda_2(\mathcal{D}_{\mathbf{w}})}{c^{2}T^*}\right)\mathcal{D}_\mathbf{w}^2.\nonumber
\end{align*} 
  This implies the second inequality in \eqref{SDDI-r}.
\end{proof}
\vspace{0.5cm}

Now we are ready to present an asymptotic flocking result to the RCS model \eqref{CSmodel-w}. Although the asymptotic  flocking estimate of the RCS model has already been studied in \cite{Ahn-Ha-Kim-JMP-2022, Ahn-Ha-Kim-CPAA-2021, Ha-Kim-Ruggeri-ARMA-2020},  we derive improved flocking estimate in the following theorem (see Remark  \ref{R3.2}). For a later use, we set 
\[
\lambda = \lambda( \mathcal{D}_\mathbf{w}(0), \mathcal{D}_{\mathbf{x}}^{\infty}, T^*,c) := \frac{\phi(\mathcal{D}^{\infty}_{\mathbf{x}})}{T^*}-\frac{2\Lambda_2(\mathcal{D}_{\mathbf{w}}(0))}{c^{2}T^*}. 
\]
 \begin{theorem}\label{T3.1}
 Suppose that initial data satisfy 
 \begin{equation} \label{NN-2-0}
  D_\mathbf{x}(0)+\frac{c^4T^*\mathcal{D}_{\mathbf{w}}(0)}{(c^2+1)[2c^2\phi(\mathcal{D}^{\infty}_{\mathbf{x}})-4\Lambda_2(\mathcal{D}_{\mathbf{w}}(0))]}
      <\mathcal{D}_\mathbf{x}^{\infty},
 \end{equation}
 for a positive constant $\mathcal{D}_\mathbf{x}^{\infty},$ and  let $\{(\mathbf{x}_{a}, \mathbf{w}_{a})\}$ be a global solution to \eqref{CSmodel-w}. Then, we have asymptotic flocking:
 \begin{align*}
 \begin{aligned} 
\sup_{0\leq t<\infty}\mathcal{D}_{\mathbf{x}}(t)<\mathcal{D}_{\mathbf{x}}^{\infty}, \quad \mathcal{D}_\mathbf{w}(t)\leq \mathcal{D}_\mathbf{w}(0) \exp\left\{-\lambda t\right\}, \quad t > 0.
 \end{aligned}
 \end{align*}
\end{theorem}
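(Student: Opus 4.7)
The natural route is a continuity (bootstrap) argument driven by the system of dissipative differential inequalities (SDDI) in Lemma \ref{lemma-SDDI}. First observe that \eqref{NN-2-0} encodes two key facts: (i) $\mathcal{D}_\mathbf{x}(0)<\mathcal{D}_\mathbf{x}^\infty$ (obtained by dropping the positive correction term on the left-hand side), and (ii) positivity of the decay rate $\lambda$, since $2c^2\phi(\mathcal{D}_\mathbf{x}^\infty)-4\Lambda_2(\mathcal{D}_\mathbf{w}(0))>0$ is equivalent to $\lambda>0$. I would then introduce the maximal time
\[
\tau_0:=\sup\Big\{\tau>0\;:\;\mathcal{D}_\mathbf{x}(s)<\mathcal{D}_\mathbf{x}^\infty\text{ for all }s\in[0,\tau]\Big\},
\]
which is strictly positive by continuity, and argue that $\tau_0=+\infty$ by contradiction.

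On the interval $[0,\tau_0)$, the monotonicity of $\phi$ yields $\phi(\mathcal{D}_\mathbf{x}(t))\geq\phi(\mathcal{D}_\mathbf{x}^\infty)$. I would then run a nested bootstrap on $\mathcal{D}_\mathbf{w}$: as long as the a priori bound $\mathcal{D}_\mathbf{w}(t)\leq\mathcal{D}_\mathbf{w}(0)$ holds, the monotonicity of $\Lambda_2$ gives $\Lambda_2(\mathcal{D}_\mathbf{w}(t))\leq\Lambda_2(\mathcal{D}_\mathbf{w}(0))$, so the second inequality in \eqref{SDDI-r} reduces to
\[
\frac{\mathrm{d}\mathcal{D}_\mathbf{w}}{\mathrm{d} t}\leq -\lambda\,\mathcal{D}_\mathbf{w},
\]
and Gr\"onwall delivers $\mathcal{D}_\mathbf{w}(t)\leq \mathcal{D}_\mathbf{w}(0)e^{-\lambda t}\leq\mathcal{D}_\mathbf{w}(0)$. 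This self-consistently propagates the bootstrap assumption on $\mathcal{D}_\mathbf{w}$ and establishes the claimed exponential decay throughout $[0,\tau_0)$.

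Feeding this decay into the first inequality of \eqref{SDDI-r} and integrating yields
\[
\mathcal{D}_\mathbf{x}(t)\leq \mathcal{D}_\mathbf{x}(0)+\frac{c^2}{c^2+1}\int_0^t \mathcal{D}_\mathbf{w}(s)\,\mathrm{d} s
\leq \mathcal{D}_\mathbf{x}(0)+\frac{c^2\,\mathcal{D}_\mathbf{w}(0)}{(c^2+1)\lambda}\bigl(1-e^{-\lambda t}\bigr),
\]
uniformly on $[0,\tau_0)$. Substituting the explicit formula for $\lambda$ converts the last quantity into an expression of the same form as the left-hand side of \eqref{NN-2-0}; the quantitative assumption \eqref{NN-2-0}, whose numerical factor in the denominator supplies the required safety margin, then forces this bound to remain strictly below $\mathcal{D}_\mathbf{x}^\infty$ for every $t\in[0,\tau_0)$. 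If $\tau_0<\infty$, continuity would give $\mathcal{D}_\mathbf{x}(\tau_0)=\mathcal{D}_\mathbf{x}^\infty$, directly contradicting the strict bound just derived; hence $\tau_0=+\infty$, and both conclusions of the theorem follow at once.

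The step I expect to be most delicate is the simultaneous closure of the two bootstraps on $\mathcal{D}_\mathbf{x}$ and $\mathcal{D}_\mathbf{w}$, because the dissipation rate in the $\mathcal{D}_\mathbf{w}$-inequality depends on $\Lambda_2(\mathcal{D}_\mathbf{w})$ itself; the argument has to be self-consistent in the sense that the conclusion $\mathcal{D}_\mathbf{w}(t)\leq\mathcal{D}_\mathbf{w}(0)$ is used to justify the very rate that produces it. The explicit numerical constants in \eqref{NN-2-0} are engineered precisely so that this coupled bootstrap closes without saturating either bound.
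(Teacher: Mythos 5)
Your proof follows essentially the same route as the paper's: a continuity/bootstrap argument on $\mathcal{D}_\mathbf{x}$ driven by the SDDI of Lemma \ref{lemma-SDDI}, combined with Gr\"onwall for $\mathcal{D}_\mathbf{w}$, integration of $\eqref{SDDI-r}_1$, and a comparison against \eqref{NN-2-0} at the putative exit time to reach a contradiction. Your nested bootstrap justifying $\mathcal{D}_\mathbf{w}(t)\leq\mathcal{D}_\mathbf{w}(0)$ on the bootstrap interval is a more careful rendering of a step the paper merely asserts from $\eqref{SDDI-r}_2$; since the dissipation rate there depends on $\Lambda_2(\mathcal{D}_\mathbf{w})$ itself, making the self-consistency explicit as you do is the right way to close the coupled estimates.
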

  \begin{proof} 
  \noindent (i)~First, note that $\eqref{SDDI-r}_2$ implies 
    \begin{equation} \label{New-1-1} 
    \mathcal{D}_\mathbf{w}(t)\leq \mathcal{D}_\mathbf{w}(0), \quad \forall~t > 0. 
    \end{equation}
  Next we claim that 
  \begin{align}\label{dia-x}
  \sup_{0\leq t<\infty}\mathcal{D}_{\mathbf{x}}(t)<\mathcal{D}_{\mathbf{x}}^{\infty}.
  \end{align}
  Suppose the contrary holds, i.e., the relation \eqref{dia-x} is not true. Then there exists a constant $t_0>0$ such that
  \begin{equation} \label{New-2}
  \mathcal{D}_{\mathbf{x}}(t)<\mathcal{D}_{\mathbf{x}}^{\infty},\quad \forall~t \in [0, t_0), \quad   \mathcal{D}_{\mathbf{x}}(t_0)=\mathcal{D}_{\mathbf{x}}^{\infty}.
   \end{equation}
  We use the second inequality in \eqref{SDDI-r} and \eqref{New-1-1} to obtain 
  \begin{align*}
  \begin{aligned}
    \mathcal{D}_{\mathbf{x}}(t)\leq& D_{\mathbf{x}}(0)+\frac{c^2}{c^2+1}\int_0^t \exp\left\{-\left(\frac{\phi(\mathcal{D}^{\infty}_{\mathbf{x}})}{T^*}
    -\frac{2\Lambda_2(\mathcal{D}_{\mathbf{w}}(0))}{c^{2}T^*}\right)s\right\}\mathcal{D}_{\mathbf{w}}(0)\,ds  \\
    \leq&  \mathcal{D}_{\mathbf{x}}(0)+ \frac{c^4T^*\mathcal{D}_{\mathbf{w}}(0)}{(c^2+1)[c^2\phi(\mathcal{D}^{\infty}_{\mathbf{x}})-2\Lambda_2(\mathcal{D}_{\mathbf{w}}(0))]}<\mathcal{D}_{\mathbf{x}}^{\infty}, \quad t\in [0, t_0],
    \end{aligned}
  \end{align*}
i.e.,
\[ \mathcal{D}_{\mathbf{x}}(t)<\mathcal{D}_{\mathbf{x}}^{\infty},\qquad t\in [0, t_0]. \]
  This contradicts to the defining condition of $t_0$ in \eqref{New-2}. Therefore, we have the uniform bound \eqref{dia-x}. \newline
  
\noindent (ii)~ We use  \eqref{dia-x} and  $\eqref{SDDI-r}_2$ to find 
\[
\frac{\mathrm{d}\mathcal{D}_\mathbf{w}}{\mathrm{d} t}\leq -\left(\frac{\phi(\mathcal{D}_{\mathbf{x}})}{T^*}-\frac{2\Lambda_2(\mathcal{D}_{\mathbf{w}})}{c^{2}T^*}\right)\mathcal{D}_\mathbf{w} \leq 
 - \lambda \mathcal{D}_\mathbf{w}.
\]
This implies the desired exponential decay of $D_\mathbf{w}$. 
\end{proof}
\begin{remark} \label{R3.2}
In previous literature, all authors assume more restrictive assumptions on the initial data compared to \eqref{NN-2-0}:
\begin{enumerate}
\item
In \cite{Ha-Kim-Ruggeri-ARMA-2020}, the authors assume the a priori assumption on  $\mathbf{w}$  and the communication weight $\phi$ is close to a constant:
\begin{equation*} 
 \sup_{0 \leq t < \infty} \max_{a \in [N]} |\mathbf{w}_a |^2 \leq R, \quad \frac{\min \phi_{ab}}{\max \phi_{ab}} \geq  \frac{2C(R) F(R) R}{c^2}.
 \end{equation*}
\item
In \cite{Ahn-Ha-Kim-CPAA-2021}, the authors assume that the initial data satisfy a formally more complicated and essentially similar assumption as in \eqref{NN-2-0} except for more restrictive coefficients . 
\vspace{0.2cm}
\item
In \cite{Ahn-Ha-Kim-JMP-2022}, the assumption on initial data is quite similar to that in \cite{Ahn-Ha-Kim-CPAA-2021}.
\end{enumerate}
\end{remark}

\vspace{0.5cm}

As a direct corollary of  the flocking estimates in Theorem \ref{T3.1}, we can derive the $\ell^2-$ norm estimate for $\bf{w}_a$. For this, we define a nonlinear functional $\mathcal{L}$ as
\begin{align*}
\mathcal{L}:=&\frac{1}{2N^2}\sum_{a,b=1}^N \left|\mathbf{w}_a-\mathbf{w}_b\right|^2
=\frac{1}{N}\sum_{a=1}^N \left|\mathbf{w}_a\right|^2-\frac{1}{N^2} \left|\sum_{a=1}^N\mathbf{w}_a\right|^2=\frac{1}{N}\sum_{a=1}^N \left|\mathbf{w}_a\right|^2,
\end{align*}
where we used \eqref{initialODE-sub-wt}.
\begin{corollary}\label{C3.1} Under the same conditions in Theorem \ref{T3.1} together with an extra condition $\mathcal{L}(0) <\infty$, we have
\begin{equation}\label{wc-decay}
  |\mathbf{w}_a(t)|\leq \mathcal{D}_{\mathbf{w}}(0)\mathrm{e}^{-\lambda t} \quad \mbox{and} \quad \mathcal{L}(t)\leq \mathcal{L}(0) \exp\left\{- \frac{2\Lambda_0(\mathcal{D}_\mathbf {w}(0))\phi(\mathcal{D}^{\infty}_\mathbf{x})}{T^*} t\right\}, \quad t > 0.
  \end{equation}
\end{corollary}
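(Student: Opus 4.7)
The plan is to dispatch the two bounds in \eqref{wc-decay} separately: the pointwise decay of $|\mathbf{w}_a|$ follows immediately from Theorem \ref{T3.1} using the zero-sum conservation, while the exponential decay of the quadratic functional $\mathcal{L}$ requires a direct energy-type estimate on its time derivative combined with the coercivity Lemma \ref{coer-F}.

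For the first inequality, I would exploit the conserved zero-sum property \eqref{initialODE-sub-wt}: writing
\[
\mathbf{w}_a(t) = \mathbf{w}_a(t) - \frac{1}{N}\sum_{b=1}^{N}\mathbf{w}_b(t) = \frac{1}{N}\sum_{b=1}^{N}(\mathbf{w}_a(t) - \mathbf{w}_b(t)),
\]
the triangle inequality yields $|\mathbf{w}_a(t)| \leq \mathcal{D}_\mathbf{w}(t)$, and Theorem \ref{T3.1} then delivers the bound $\mathcal{D}_\mathbf{w}(0)e^{-\lambda t}$ at once.

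For the decay of $\mathcal{L}$, I would differentiate directly. Using $\eqref{CSmodel-w}_2$,
\[
\frac{\mathrm{d}\mathcal{L}}{\mathrm{d} t} = \frac{2}{N^2 T^*}\sum_{a,b=1}^{N}\phi(|\mathbf{x}_a-\mathbf{x}_b|)\,\mathbf{w}_a\cdot\left(\frac{\mathbf{w}_b}{F_b}-\frac{\mathbf{w}_a}{F_a}\right),
\]
and then symmetrize via the standard index swap $a \leftrightarrow b$ in order to obtain
\[
\frac{\mathrm{d}\mathcal{L}}{\mathrm{d} t} = -\frac{1}{N^2 T^*}\sum_{a,b=1}^{N}\phi(|\mathbf{x}_a-\mathbf{x}_b|)\,(\mathbf{w}_a-\mathbf{w}_b)\cdot\left(\frac{\mathbf{w}_a}{F_a}-\frac{\mathbf{w}_b}{F_b}\right).
\]
Now I invoke two ingredients from what has already been established: from the first bound (or \eqref{wD-relation}) one has $|\mathbf{w}_a(t)|, |\mathbf{w}_b(t)| \leq \mathcal{D}_\mathbf{w}(0)$ for all $t\geq 0$, so Lemma \ref{coer-F} with $W_0 = \mathcal{D}_\mathbf{w}(0)$ applies pairwise, giving
\[
(\mathbf{w}_a-\mathbf{w}_b)\cdot\left(\frac{\mathbf{w}_a}{F_a}-\frac{\mathbf{w}_b}{F_b}\right) \geq \Lambda_0(\mathcal{D}_\mathbf{w}(0))\,|\mathbf{w}_a-\mathbf{w}_b|^2;
\]
moreover, the monotonicity of $\phi$ together with Theorem \ref{T3.1} gives $\phi(|\mathbf{x}_a-\mathbf{x}_b|)\geq \phi(\mathcal{D}_\mathbf{x}^\infty)$ uniformly in time. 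Combining these and recognizing that $\frac{1}{N^2}\sum_{a,b}|\mathbf{w}_a-\mathbf{w}_b|^2 = 2\mathcal{L}$, I obtain the Grönwall-type differential inequality
\[
\frac{\mathrm{d}\mathcal{L}}{\mathrm{d} t} \leq -\frac{2\Lambda_0(\mathcal{D}_\mathbf{w}(0))\phi(\mathcal{D}_\mathbf{x}^\infty)}{T^*}\mathcal{L},
\]
which integrates to the claimed exponential bound.

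No genuinely hard step is expected here; the only mild subtlety is verifying that the hypotheses of Lemma \ref{coer-F} are available uniformly in time, which is exactly where Theorem \ref{T3.1} (specifically the monotonicity $\mathcal{D}_\mathbf{w}(t)\leq \mathcal{D}_\mathbf{w}(0)$) is used. Everything else is a symmetrization and Grönwall argument.
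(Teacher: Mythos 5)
Your proof is correct and follows essentially the same route as the paper: the pointwise bound via the zero-sum conservation and $\mathcal{D}_\mathbf{w}(t)\leq\mathcal{D}_\mathbf{w}(0)e^{-\lambda t}$, and for $\mathcal{L}$ the symmetrization by the index swap $a\leftrightarrow b$, the lower bound $\phi(|\mathbf{x}_a-\mathbf{x}_b|)\geq\phi(\mathcal{D}_\mathbf{x}^\infty)$ from Theorem \ref{T3.1}, the coercivity of Lemma \ref{coer-F} with $W_0=\mathcal{D}_\mathbf{w}(0)$, and Gr\"onwall. (You have also implicitly corrected a sign typo in the paper's middle display, where $\frac{\mathbf{w}_b}{F_b}-\frac{\mathbf{w}_a}{F_a}$ should read $\frac{\mathbf{w}_a}{F_a}-\frac{\mathbf{w}_b}{F_b}$ for the signs to work out.)
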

\begin{proof} 
\noindent (i)~The first estimate \eqref{wc-decay} follows directly from \eqref{wD-relation}  and Theorem \ref{T3.1}. \newline

\noindent (ii)~It follows from Lemma \ref{coer-F} and $\mathcal{D}_{\mathbf{x}}(t)<\mathcal{D}_{\mathbf{x}}^{\infty}$ in Theorem \ref{T3.1} that 
   \begin{align*}
\frac{\mathrm{d} \mathcal{L}}{\mathrm{d} t}&=\frac{2}{N}\sum_{a=1}^N\mathbf{w}_a\cdot\frac{\mathrm{d}}{\mathrm{d} t}\mathbf{w}_a =\frac{2}{N^2T^*}\sum_{a,b=1}^{N}\mathbf{w}_a\cdot\phi(|\mathbf{x}_a-\mathbf{x}_b|)
\left(\frac{\mathbf{w}_b}{F_b}-\frac{\mathbf{w}_a}{F_a}\right)\\
&\leq-\frac{1}{N^2T^*}\sum_{a,b=1}^{N}\phi(D_{\mathbf{x}}^{\infty})\left(\mathbf{w}_a-\mathbf{w}_b\right)\cdot
\left(\frac{\mathbf{w}_b}{F_b}-\frac{\mathbf{w}_a}{F_a}\right)\\
&\leq  -\frac{2\Lambda_0(\mathcal{D}_\mathbf{w}(0))\phi(\mathcal{D}^{\infty}_\mathbf{x})}{T^*}\mathcal{L}.
  \end{align*}
 This implies the second estimate in \eqref{wc-decay}.
  \end{proof}

%

%
%
%
\subsection{Uniform-time classical limit} \label{sec:3.2}
In this subsection, we focus on the classical(non-relativistic)  limit from the RCS model to the CS model, as $c\rightarrow\infty$. The optimal convergence rate of the  classical limit in an infinite-time interval will be provided in the sequel. \newline

Recall the explicit form of $\mathbf{w}_a$:
\[ \mathbf{w}_a=\mathbf{v}_a\left(1+\frac{(D_a+2)}{2\gamma^*}\Gamma_a\right)\Gamma_a, \]
and consider the following Cauchy problems:
\begin{equation}
\begin{cases}\label{CSmodel-wc}  
&\displaystyle\frac{\mathrm{d}\mathbf{x}_a}{\mathrm{d} t}=\frac{\mathbf{w}_a}{F_a},\quad a \in [N], \quad t>0,\vspace{0.2cm}\\
&\displaystyle\frac{\mathrm{d}\mathbf{w}_a}{\mathrm{d} t}
=\frac{1}{NT^*}\sum_{b=1}^{N}\phi(|\mathbf{x}_a-\mathbf{x}_b|)
\left(\frac{\mathbf{w}_b}{F_b}-\frac{\mathbf{w}_a}{F_a}\right), \vspace{0.2cm}\\
&\displaystyle\left(\mathbf{x}_a(0), \mathbf{w}_a(0)\right)=\left(\mathbf{x}^{\mathrm{in}}_a, \mathbf{w}^{\mathrm{in}}_a\right),
\end{cases}
\end{equation}
and
\begin{align}\label{CSmodel-vc}  
\begin{cases}
&\displaystyle\frac{\mathrm{d}\mathbf{x}^{\infty}_a}{\mathrm{d} t}=\mathbf{w}^{\infty}_a,\quad a \in [N], \quad t>0,\vspace{0.2cm}\\
&\displaystyle\frac{\mathrm{d}\mathbf{w}^{\infty}_a}{\mathrm{d} t}
=\frac{1}{NT^*}\sum_{b=1}^{N}
\phi(|\mathbf{x}^{\infty}_a-\mathbf{x}^{\infty}_b|)\left(\mathbf{w}^{\infty}_b-\mathbf{w}^{\infty}_a\right), \vspace{0.2cm}\\
&\displaystyle\left(\mathbf{x}^{\infty}_a(0), \mathbf{w}^{\infty}_a(0)\right)=\left(\mathbf{x}^{\infty, \mathrm{in}}_a, \mathbf{w}^{\infty,\mathrm{in}}_a \right).
\end{cases}
\end{align}
As in \eqref{initialODE-sub-wt}, we have
\begin{align}\label{initialODE-sub-wvt}
\begin{aligned}
&\sum_{a=1}^N\mathbf{w}_a(t)=\sum_{a=1}^N\mathbf{w}^{\infty}_a(t)=0,  \qquad  \forall~t\geq0. 
\end{aligned}
\end{align}
To measure the difference between solutions to Cauchy problems \eqref{CSmodel-wc} and \eqref{CSmodel-vc}, we define the deviation functional $\Delta^c$ as follows:
$$\Delta^c(t):=\frac{1}{N}\sum_{a=1}^N\left(\left|\mathbf{x}_a(t)-\mathbf{x}^{\infty}_a(t)\right|^2
+\left|\mathbf{w}_a(t)-\mathbf{w}^{\infty}_a(t)\right|^2\right).$$

Next, we present flocking estimates and $\ell^2$ norm for the RCS model, which will be used in the proof of uniform-time classical limit of the kinetic RCS model.
For convenience, we first introduce the following diameter functionals:
\[ \mathcal{D}_\mathbf{x^{\infty}}:=\max_{1\leq a,b\leq N}\left|\mathbf{x}_a^{\infty}-\mathbf{x}_b^{\infty}\right|, \quad 
\mathcal{D}_\mathbf{w^{\infty}}:=\max_{1\leq a,b\leq N}\left|\mathbf{w}_a^{\infty}-\mathbf{w}_b^{\infty}\right|.
\]
We also define a Lyapunov functional $\mathcal{L}^{\infty}$ as
\begin{align*}
\mathcal{L}^{\infty}:=&\frac{1}{2N^2}\sum_{a,b=1}^N \left|\mathbf{w}^{\infty}_a(t)-\mathbf{w}^{\infty}_b\right|^2
=\frac{1}{N}\sum_{a=1}^N \left|\mathbf{w}^{\infty}_a\right|^2-\frac{1}{2N^2} \left|\sum_{a=1}^N\mathbf{w}^{\infty}_a\right|^2=\frac{1}{N}\sum_{a=1}^N \left|\mathbf{w}^{\infty}_a\right|^2.
\end{align*}
In the following lemma, we list all the flocking results for the limit system \eqref{CSmodel-vc}. 
\begin{lemma} \label{CS-fe}
\emph{ \cite{Ha-Kim-Zhang-KRM-2018}}
Let $\left\{\left(\mathbf{x}^{\infty}_a, \mathbf{w}^{\infty}_a\right)\right\}$ be a global solution to  \eqref{CSmodel-vc} with $\mathcal{L}^{\infty}(0) <\infty$. Then, the following assertions hold.
\begin{enumerate}
  \item 
 The maximal speed monotonically decreases in time:
  \[ \max_{1\leq a\leq N}|\mathbf{w}_a^{\infty}(t)|\leq \max_{1\leq a\leq N}|\mathbf{w}_a^{\infty}(s)|,\qquad 0\leq s\leq t. \]
  \vspace{0.2cm}
  \item 
  If initial data satisfy 
  \begin{align}\label{ini-CS}
  \mathcal{D}_{\mathbf{w}^{\infty}}(0)<\int_{\mathcal{D}_{\mathbf{x}^{\infty}}(0)}^{\infty}\phi(s)\, \mathrm{d}s,
  \end{align}
  then, there exists positive constant $\mathcal{D}_{\mathbf{x}^{\infty}}^{\infty}$ such that
  \[
    \sup_{0\leq t<\infty}D_{\mathbf{x}^{\infty}}(t) \leq \mathcal{D}_{\mathbf{x}^{\infty}}^{\infty},\qquad \mathcal{D}_{\mathbf{w}^{\infty}}(t)\leq \mathcal{D}_{\mathbf{w}^{\infty}}(0)\exp\left\{-\frac{\phi(\mathcal{D}_{\mathbf{x}^{\infty}}^{\infty})}{T^*}t\right\},\quad \forall ~t>0.
  \]
   \vspace{0.2cm}
  \item The functional $\mathcal{L}^{\infty}$ decays exponentially fast: 
  \begin{equation*}\label{l2winfty-decay}
  \mathcal{L}^{\infty}(t)\leq \mathcal{L}^{\infty}(0) \exp\left\{-\frac{2\phi(\mathcal{D}^{\infty}_{\mathbf{x}^{\infty}})}{T^*} t\right\},\quad \forall ~t>0.
  \end{equation*}
\end{enumerate}
\end{lemma}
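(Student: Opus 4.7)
The plan is to follow the same template as in Section \ref{sec:3.1} for the RCS model, but exploit the fact that in the classical CS system \eqref{CSmodel-vc} we have effectively $F_a \equiv 1$, so no coercivity correction $\Lambda_0$ (and no $c^{-2}$ remainder from Lemma \ref{coer-F}) is required. The three assertions will be proved in the given order, with item (2) supplying the uniform position bound that feeds into item (3).

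For item (1), I would pick a time-dependent extremal index $M = M_t$ with $|\mathbf{w}^{\infty}_M(t)| = \max_a |\mathbf{w}^{\infty}_a(t)|$ and apply the Dini derivative argument. Computing along \eqref{CSmodel-vc},
\[
\tfrac{1}{2}\tfrac{d}{dt}|\mathbf{w}^{\infty}_M|^2 = \tfrac{1}{NT^*}\sum_{b=1}^{N}\phi(|\mathbf{x}^{\infty}_M-\mathbf{x}^{\infty}_b|)\bigl(\mathbf{w}^{\infty}_M\cdot \mathbf{w}^{\infty}_b - |\mathbf{w}^{\infty}_M|^2\bigr) \leq 0,
\]
since $\mathbf{w}^{\infty}_M \cdot \mathbf{w}^{\infty}_b \leq |\mathbf{w}^{\infty}_M||\mathbf{w}^{\infty}_b| \leq |\mathbf{w}^{\infty}_M|^2$. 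For item (2), I would reproduce the derivation of Lemma \ref{lemma-SDDI} in the classical setting to obtain the SDDI
\[
\tfrac{d}{dt}\mathcal{D}_{\mathbf{x}^{\infty}} \leq \mathcal{D}_{\mathbf{w}^{\infty}}, \qquad \tfrac{d}{dt}\mathcal{D}_{\mathbf{w}^{\infty}} \leq -\tfrac{\phi(\mathcal{D}_{\mathbf{x}^{\infty}})}{T^*}\,\mathcal{D}_{\mathbf{w}^{\infty}}.
\]
Combining these yields the Lyapunov-type inequality $\tfrac{d}{dt}\!\bigl[\mathcal{D}_{\mathbf{w}^{\infty}} + \tfrac{1}{T^*}\int_{\mathcal{D}_{\mathbf{x}^{\infty}}(0)}^{\mathcal{D}_{\mathbf{x}^{\infty}}}\phi(s)\,ds\bigr] \leq 0$, so the bracketed quantity is non-increasing. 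Defining $\mathcal{D}_{\mathbf{x}^{\infty}}^{\infty}$ implicitly by matching $\int_{\mathcal{D}_{\mathbf{x}^{\infty}}(0)}^{\mathcal{D}_{\mathbf{x}^{\infty}}^{\infty}}\phi(s)\,ds$ to $T^* \mathcal{D}_{\mathbf{w}^{\infty}}(0)$, which is possible exactly under \eqref{ini-CS}, a standard continuity/bootstrap argument (identical in spirit to the one used to prove \eqref{dia-x} in Theorem \ref{T3.1}) then shows $\mathcal{D}_{\mathbf{x}^{\infty}}(t) \leq \mathcal{D}_{\mathbf{x}^{\infty}}^{\infty}$ for all $t \geq 0$. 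Plugging this uniform bound back into the second SDDI and invoking Grönwall's inequality produces the claimed exponential decay of $\mathcal{D}_{\mathbf{w}^{\infty}}$.

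For item (3), I would differentiate $\mathcal{L}^{\infty}$ along \eqref{CSmodel-vc} and symmetrize in the double sum:
\[
\tfrac{d}{dt}\mathcal{L}^{\infty} = \tfrac{2}{N^2 T^*}\sum_{a,b=1}^{N}\phi(|\mathbf{x}^{\infty}_a-\mathbf{x}^{\infty}_b|)\,\mathbf{w}^{\infty}_a\cdot(\mathbf{w}^{\infty}_b - \mathbf{w}^{\infty}_a) = -\tfrac{1}{N^2 T^*}\sum_{a,b=1}^{N}\phi(|\mathbf{x}^{\infty}_a-\mathbf{x}^{\infty}_b|)|\mathbf{w}^{\infty}_a - \mathbf{w}^{\infty}_b|^2.
\]
Using the uniform bound $\mathcal{D}_{\mathbf{x}^{\infty}}(t) \leq \mathcal{D}_{\mathbf{x}^{\infty}}^{\infty}$ from item (2) and the monotonicity of $\phi$ to estimate $\phi(|\mathbf{x}^{\infty}_a-\mathbf{x}^{\infty}_b|) \geq \phi(\mathcal{D}_{\mathbf{x}^{\infty}}^{\infty})$, the right-hand side is bounded above by $-\tfrac{2\phi(\mathcal{D}_{\mathbf{x}^{\infty}}^{\infty})}{T^*}\mathcal{L}^{\infty}$, and Grönwall concludes. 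The main obstacle I anticipate is the bootstrap step in item (2): one must ensure that the threshold $\mathcal{D}_{\mathbf{x}^{\infty}}^{\infty}$ is finite and compatible with \eqref{ini-CS}, and that the continuity argument forbids $\mathcal{D}_{\mathbf{x}^{\infty}}$ from first touching this threshold; the remaining computations in items (1) and (3) are routine energy-type manipulations.
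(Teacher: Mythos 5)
The paper does not prove this lemma: it is quoted verbatim from \cite{Ha-Kim-Zhang-KRM-2018} and used as a black box, so there is no internal proof to compare against. Your reconstruction is the standard argument and is essentially correct in all three parts. Item (1) is the usual Dini-derivative computation on the extremal index, exploiting $\mathbf{w}^\infty_M\cdot\mathbf{w}^\infty_b \le |\mathbf{w}^\infty_M|^2$ and $\phi\ge 0$. Item (3) is the standard symmetrization of the double sum together with the uniform bound $\phi(|\mathbf{x}^\infty_a-\mathbf{x}^\infty_b|) \ge \phi(\mathcal{D}^\infty_{\mathbf{x}^\infty})$ from item (2); the factor $2$ in the exponent is accounted for correctly by the normalization $\mathcal{L}^\infty = \tfrac{1}{2N^2}\sum_{a,b}|\mathbf{w}^\infty_a-\mathbf{w}^\infty_b|^2$. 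Item (2) via the Lyapunov functional $\mathcal{D}_{\mathbf{w}^\infty} + \tfrac{1}{T^*}\int_{\mathcal{D}_{\mathbf{x}^\infty}(0)}^{\mathcal{D}_{\mathbf{x}^\infty}}\phi(s)\,ds$ is also the standard route, and in fact once this functional is seen to be non-increasing you do not even need a bootstrap: nonnegativity of $\mathcal{D}_{\mathbf{w}^\infty}(t)$ immediately gives $\int_{\mathcal{D}_{\mathbf{x}^\infty}(0)}^{\mathcal{D}_{\mathbf{x}^\infty}(t)}\phi \le T^*\mathcal{D}_{\mathbf{w}^\infty}(0)$ for all $t$, and inverting yields the uniform position bound directly.

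One small point worth flagging: your chosen threshold requires $\int_{\mathcal{D}_{\mathbf{x}^\infty}(0)}^{\mathcal{D}^\infty_{\mathbf{x}^\infty}}\phi(s)\,ds = T^*\mathcal{D}_{\mathbf{w}^\infty}(0)$, so finiteness of $\mathcal{D}^\infty_{\mathbf{x}^\infty}$ needs $T^*\mathcal{D}_{\mathbf{w}^\infty}(0) < \int_{\mathcal{D}_{\mathbf{x}^\infty}(0)}^{\infty}\phi(s)\,ds$, whereas \eqref{ini-CS} as stated has no $T^*$ on the left. These agree only when $T^*\le 1$. This mismatch is inherited from the paper's \eqref{ini-CS}, which appears to carry over the condition from the unit-coupling CS model of \cite{Ha-Kim-Zhang-KRM-2018} without adjusting for the $1/T^*$ prefactor in \eqref{CSmodel-vc}; you have derived the correct threshold, so you should say "under the $T^*$-corrected form of \eqref{ini-CS}" rather than "exactly under \eqref{ini-CS}".
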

As a corollary, we have the exponential decay of $ |\mathbf{w}^{\infty}_a(t)|$. 
\begin{corollary}\label{cw-limit}
Let $\left\{\left(\mathbf{x}^{\infty}_a, \mathbf{w}^{\infty}_a\right)\right\}_{a=1}^N$ be a global solution to the Cauchy problem \eqref{CSmodel-vc}. Under the condition \eqref{ini-CS}, we have
\begin{equation}\label{winfty-decay}
  |\mathbf{w}^{\infty}_a(t)|\leq \mathcal{D}_{\mathbf{w}^{\infty}}(0)\exp\left\{-\frac{\phi(\mathcal{D}_{\mathbf{x}^{\infty}}^{\infty})}{T^*} t\right\},\quad\forall~ a \in [N],\quad \forall~t>0.
  \end{equation}
\end{corollary}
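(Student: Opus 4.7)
The plan is quite short since this corollary follows almost immediately from results already in hand. The key observation is that the zero-sum property \eqref{initialODE-sub-wvt} for the limit system gives $\sum_{a=1}^N \mathbf{w}_a^{\infty}(t) = 0$ for all $t \geq 0$, which allows me to bound each individual momentum $|\mathbf{w}_a^{\infty}(t)|$ by the diameter $\mathcal{D}_{\mathbf{w}^{\infty}}(t)$.

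First, I would write
\[
\mathbf{w}_a^{\infty}(t) = \mathbf{w}_a^{\infty}(t) - \frac{1}{N}\sum_{b=1}^N \mathbf{w}_b^{\infty}(t) = \frac{1}{N}\sum_{b=1}^N \bigl(\mathbf{w}_a^{\infty}(t) - \mathbf{w}_b^{\infty}(t)\bigr),
\]
and apply the triangle inequality to conclude $|\mathbf{w}_a^{\infty}(t)| \leq \mathcal{D}_{\mathbf{w}^{\infty}}(t)$ for every $a \in [N]$. This mirrors the analogous estimate \eqref{wD-relation} used in the relativistic setting.

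Next, I would invoke Lemma \ref{CS-fe}(2), which under the initial-data assumption \eqref{ini-CS} guarantees the exponential decay
\[
\mathcal{D}_{\mathbf{w}^{\infty}}(t) \leq \mathcal{D}_{\mathbf{w}^{\infty}}(0)\exp\left\{-\frac{\phi(\mathcal{D}_{\mathbf{x}^{\infty}}^{\infty})}{T^*}t\right\}, \qquad t > 0.
\]
Chaining these two bounds immediately delivers \eqref{winfty-decay}.

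There is essentially no obstacle here; the only thing to be careful about is making explicit that conservation of total momentum in the classical CS system \eqref{CSmodel-vc} is preserved along the flow (which is clear by summing $\eqref{CSmodel-vc}_2$ over $a$ and using the symmetry $\phi(|\mathbf{x}_a^{\infty}-\mathbf{x}_b^{\infty}|) = \phi(|\mathbf{x}_b^{\infty}-\mathbf{x}_a^{\infty}|)$), so that the zero-sum condition in \eqref{initialODE-sub-wvt} persists for all $t \geq 0$ and permits the centering step above.
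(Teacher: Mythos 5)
Your proof is correct and is exactly the paper's argument: the paper's proof is the one-line remark that \eqref{winfty-decay} follows from the zero-sum identity \eqref{initialODE-sub-wvt} and Lemma \ref{CS-fe}, which is precisely the centering-plus-decay chain you spelled out. No discrepancy.
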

\begin{proof}
The desired estimate \eqref{winfty-decay} follows directly from \eqref{initialODE-sub-wvt}  and Lemma \ref{CS-fe}.
\end{proof}
Next, we study the temporal evolutions of $\left|\mathbf{x}_a-\mathbf{x}^{\infty}_a\right|$ and $\left|\mathbf{w}_a-\mathbf{w}^{\infty}_a\right|$.
\begin{lemma}\label{L4.4}
Let $\left\{\left(\mathbf{x}_a, \mathbf{w}_a\right)\right\}$ and $\left\{\left(\mathbf{x}^{\infty}_a, \mathbf{w}^{\infty}_a\right)\right\}$ be global solutions to \eqref{CSmodel-wc} and \eqref{CSmodel-vc}, respectively. Then we have the following estimates: for $t>0$,
\begin{align*}
\begin{aligned} 
&(i)\quad \frac{\mathrm{d}}{\mathrm{d} t}\left|\mathbf{x}_a-\mathbf{x}^{\infty}_a\right|\leq \left|\mathbf{w}_a-\mathbf{w}^{\infty}_a\right|+ \frac{\Lambda_1(\mathcal{D}_\mathbf{w}(0))}{c^2}\left|\mathbf{w}_a\right|,\quad a \in [N],\nonumber\\
&(ii)\quad \frac{1}{N}\frac{\mathrm{d}}{\mathrm{d} t}\sum_{a=1}^N \left|\mathbf{w}_a-\mathbf{w}^{\infty}_a\right|^2 \\
& \hspace{1.5cm} \leq -\frac{\phi(\mathcal{D}_{\mathbf{x}}^{\infty})}{NT^*}\sum_{a=1}^{N}\left|\mathbf{w}_a-\mathbf{w}^{\infty}_a\right|^2
+\frac{4\Lambda_2^2(\mathcal{D}_{\mathbf{w}}(0))\mathcal{L}(0) }{ \phi(\mathcal{D}_{\mathbf{x}}^{\infty})T^*c^{4}} \exp\left\{- \frac{2\Lambda_0(\mathcal{D}_\mathbf{w}(0))\phi(\mathcal{D}^{\infty}_\mathbf{x})}{T^*} t\right\}\\
&\hspace{1.7cm}+\frac{16\left[\phi\right]^2_{\mathrm{Lip}}\mathcal{D}^2_{\mathbf{w}^{\infty}}(0)}{ N\phi(\mathcal{D}_{\mathbf{x}}^{\infty})T^*}\sum_{a=1}^{N}|\mathbf{x}_a-\mathbf{x}^{\infty}_a|^2
\exp\left\{-\frac{2\phi(\mathcal{D}^{\infty}_{\mathbf{x}^{\infty}})}{T^*} t\right\}.
\end{aligned}
\end{align*}
Here $\Lambda_1(\mathcal{D}_\mathbf{w}(0))$ is a positive constant depending on $\mathcal{D}_\mathbf{w}(0)$.
\end{lemma}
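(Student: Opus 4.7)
This is immediate from the equations of motion. Using $\mathrm{d}\mathbf{x}_a/\mathrm{d}t = \mathbf{w}_a/F_a$ and $\mathrm{d}\mathbf{x}_a^\infty/\mathrm{d}t = \mathbf{w}_a^\infty$, the Lipschitz continuity of $|\cdot|$ together with the triangle inequality gives, almost everywhere,
\[
\frac{\mathrm{d}}{\mathrm{d}t}|\mathbf{x}_a - \mathbf{x}_a^\infty| \leq \left|\frac{\mathbf{w}_a}{F_a} - \mathbf{w}_a^\infty\right| \leq |\mathbf{w}_a - \mathbf{w}_a^\infty| + \left|\frac{1}{F_a} - 1\right||\mathbf{w}_a|.
\]
Since $F_a = \Gamma_a(1 + \frac{D_a+2}{2\gamma^*_a}\Gamma_a)$ together with $\Gamma_a - 1 = \mathcal{O}(c^{-2})$ and $1/\gamma^*_a = k_BT^*/(m_ac^2) = \mathcal{O}(c^{-2})$, a direct expansion yields $|1/F_a - 1| \leq \Lambda_1(\mathcal{D}_\mathbf{w}(0))/c^2$ on the invariant velocity ball $\{|\mathbf{w}_a| \leq \mathcal{D}_\mathbf{w}(0)\}$ guaranteed by Corollary \ref{C3.1}, which is exactly (i).

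\textbf{Plan for (ii): decomposition and dissipation.} Set $\delta_a := \mathbf{w}_a - \mathbf{w}_a^\infty$, $\phi_{ab} := \phi(|\mathbf{x}_a - \mathbf{x}_b|)$, $\phi^\infty_{ab} := \phi(|\mathbf{x}^\infty_a - \mathbf{x}^\infty_b|)$. Differentiating $N^{-1}\sum_a |\delta_a|^2$, inserting both velocity equations, and adding/subtracting $\phi_{ab}(\mathbf{w}^\infty_b - \mathbf{w}^\infty_a)$ inside the sum, I decompose $N^{-1}\frac{\mathrm{d}}{\mathrm{d}t}\sum_a|\delta_a|^2 = \mathrm{I} + \mathrm{II} + \mathrm{III}$ with
\begin{align*}
\mathrm{I} &= \frac{2}{N^2T^*}\sum_{a,b}\phi_{ab}\,\delta_a \cdot (\delta_b - \delta_a),\\
\mathrm{II} &= \frac{2}{N^2T^*}\sum_{a,b}\phi_{ab}\,\delta_a \cdot \bigl[(F_b^{-1}-1)\mathbf{w}_b - (F_a^{-1}-1)\mathbf{w}_a\bigr],\\
\mathrm{III} &= \frac{2}{N^2T^*}\sum_{a,b}(\phi_{ab} - \phi^\infty_{ab})\,\delta_a \cdot (\mathbf{w}^\infty_b - \mathbf{w}^\infty_a).
\end{align*}
The standard swap $a \leftrightarrow b$ combined with the symmetry $\phi_{ab} = \phi_{ba}$ converts $\mathrm{I}$ into $-(N^2T^*)^{-1}\sum_{a,b}\phi_{ab}|\delta_a - \delta_b|^2$. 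The zero-sum identity $\sum_a\delta_a = 0$ from \eqref{initialODE-sub-wvt} reduces $\sum_{a,b}|\delta_a - \delta_b|^2$ to $2N\sum_a|\delta_a|^2$, and the uniform flocking bound $\phi_{ab} \geq \phi(\mathcal{D}_\mathbf{x}^\infty)$ from Theorem \ref{T3.1} then yields $\mathrm{I} \leq -2\phi(\mathcal{D}_\mathbf{x}^\infty)/(NT^*)\sum_a|\delta_a|^2$. This is the dissipation budget into which the other two pieces must be partially absorbed.

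\textbf{Plan for (ii): nonlinear correction and weight mismatch.} For $\mathrm{II}$, the sharp input is the mean value bound extracted from \eqref{I21}, namely $|\nabla_\mathbf{z}[(1/F(z^2)-1)\mathbf{z}]| \leq \Lambda_2(\mathcal{D}_\mathbf{w}(0))/c^2$ on the invariant ball from Corollary \ref{C3.1}, which gives
\[
\bigl|(F_b^{-1}-1)\mathbf{w}_b - (F_a^{-1}-1)\mathbf{w}_a\bigr| \leq \frac{\Lambda_2(\mathcal{D}_\mathbf{w}(0))}{c^2}|\mathbf{w}_b - \mathbf{w}_a|.
\]
A weighted Young's inequality $\phi_{ab}|\delta_a||\mathbf{w}_b - \mathbf{w}_a| \leq \frac{\phi_{ab}}{2\beta}|\delta_a|^2 + \frac{\phi_{ab}\beta}{2}|\mathbf{w}_b-\mathbf{w}_a|^2$ with $\beta = 2\Lambda_2/(\phi(\mathcal{D}_\mathbf{x}^\infty)c^2)$, followed by $\sum_{a,b}|\mathbf{w}_b - \mathbf{w}_a|^2 = 2N^2\mathcal{L}$ (again via the zero-sum $\sum_a\mathbf{w}_a = 0$), absorbs $\phi(\mathcal{D}_\mathbf{x}^\infty)/(2NT^*)\sum_a|\delta_a|^2$ from the budget and leaves the remainder $4\Lambda_2^2\mathcal{L}(t)/(\phi(\mathcal{D}_\mathbf{x}^\infty)T^*c^4)$; substituting the exponential decay of $\mathcal{L}$ from the second inequality of \eqref{wc-decay} in Corollary \ref{C3.1} gives the second stated remainder. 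For $\mathrm{III}$, the Lipschitz bound $|\phi_{ab}-\phi^\infty_{ab}| \leq [\phi]_{\mathrm{Lip}}(|\mathbf{x}_a-\mathbf{x}^\infty_a|+|\mathbf{x}_b-\mathbf{x}^\infty_b|)$, together with the classical flocking decay $|\mathbf{w}^\infty_b - \mathbf{w}^\infty_a| \leq \mathcal{D}_{\mathbf{w}^\infty}(0)\exp\{-\phi(\mathcal{D}^\infty_{\mathbf{x}^\infty})t/T^*\}$ from Lemma \ref{CS-fe} and a Young's inequality $ab \leq \frac{a^2}{2\gamma} + \frac{\gamma b^2}{2}$ with $\gamma = 4/\phi(\mathcal{D}_\mathbf{x}^\infty)$, absorbs another $\phi(\mathcal{D}_\mathbf{x}^\infty)/(4NT^*)\sum_a|\delta_a|^2$ and produces the third stated remainder; crucially, placing the $\mathbf{w}^\infty$-exponential on the squared side of Young's is what yields the rate $2\phi^\infty_{\mathbf{x}^\infty}/T^*$. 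The combined absorption is $3\phi(\mathcal{D}_\mathbf{x}^\infty)/(4NT^*)\sum_a|\delta_a|^2$, comfortably within the $2\phi(\mathcal{D}_\mathbf{x}^\infty)/(NT^*)\sum_a|\delta_a|^2$ budget from $\mathrm{I}$, so the residual dissipation is at worst $-\phi(\mathcal{D}_\mathbf{x}^\infty)/(NT^*)\sum_a|\delta_a|^2$ as claimed. The only genuinely delicate analytic ingredient is the gradient bound \eqref{I21} for $(1/F-1)\mathbf{z}$ coupled with the invariant-ball facts from Corollary \ref{C3.1}; the rest is symmetrization, zero-sum cancellation, and two Young's inequalities whose parameters are chosen to match the precise numerical constants $4$ and $16$ in the statement.
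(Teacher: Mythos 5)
Your proposal is correct and follows essentially the same route as the paper: the same three-term decomposition (the paper's $\mathcal{I}_{11},\mathcal{I}_{12},\mathcal{I}_{13}$, up to a factor of $2$ from normalizing by $1/N$ rather than $1/(2N)$), the same inputs (the gradient bound \eqref{I21}, the zero-sum identity \eqref{initialODE-sub-wvt}, the decay of $\mathcal{L}$ from Corollary \ref{C3.1}, the classical flocking decay from Lemma \ref{CS-fe}), and Young's inequalities tuned to absorb into the $\mathcal{I}_{11}$ dissipation. The only cosmetic difference is that you skip the $a\leftrightarrow b$ symmetrization on $\mathrm{II}$ and $\mathrm{III}$ that the paper performs, compensating with $\phi_{ab}\le 1$ and $\sum_{a,b}|\delta_a|^2=N\sum_a|\delta_a|^2$, which reproduces the stated constants just as well.
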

\begin{proof} $(i)$ It follows from \eqref{CSmodel-wc} and \eqref{CSmodel-vc} that
\begin{align*}
\begin{aligned}
 \frac{1}{2}\frac{\mathrm{d}}{\mathrm{d} t}\left|\mathbf{x}_a-\mathbf{x}^{\infty}_a\right|^2 &= \left(\mathbf{x}_a-\mathbf{x}^{\infty}_a\right)\cdot \left(\frac{\mathbf{w}_a}{F_a}-\mathbf{w}^{\infty}_a\right)  \leq \left|\mathbf{x}_a-\mathbf{x}^{\infty}_a\right|\left( \left|\mathbf{w}_a-\mathbf{w}^{\infty}_a\right|+ \left|\mathbf{w}_a-\frac{\mathbf{w}_a}{F_a}\right|\right)\\
  &\leq   \left|\mathbf{x}_a-\mathbf{x}^{\infty}_a\right|\left( \left|\mathbf{w}_a-\mathbf{w}^{\infty}_a\right|+ \frac{\Lambda_1(\mathcal{D}_\mathbf{w}(0))}{c^2}\left|\mathbf{w}_a\right|\right).
  \end{aligned}
\end{align*}
This yields the desired estimate. \newline

\noindent $(ii)$ It follows from the dynamics of $\mathbf{w}_a$ and $\mathbf{w}^{\infty}_a$ that 
\begin{align} 
\begin{split} \label{wcvc}
&\frac{1}{2N}\sum_{a=1}^N \frac{\mathrm{d}}{\mathrm{d} t}\left|\mathbf{w}_a-\mathbf{w}^{\infty}_a\right|^2
=\frac{1}{N}\sum_{a=1}^N\left(\mathbf{w}_a-\mathbf{w}^{\infty}_a\right)\cdot \frac{\mathrm{d}}{\mathrm{d} t} \left(\mathbf{w}_a-\mathbf{w}^{\infty}_a\right) \\
&\hspace{1cm} =\frac{1}{N^2T^*}\sum_{a, b=1}^{N}\phi(|\mathbf{x}_a-\mathbf{x}_b|)\left(\mathbf{w}_a-\mathbf{w}^{\infty}_a\right)\cdot\left[\left(\frac{\mathbf{w}_b}{F_b}-\frac{\mathbf{w}_a}{F_a}\right)
-\left(\mathbf{w}^{\infty}_b-\mathbf{w}^{\infty}_a\right)\right ] \\
&\hspace{1.2cm}+\frac{1}{N^2T^*}\sum_{a, b=1}^{N}\left[\phi(|\mathbf{x}_a-\mathbf{x}_b|)-\phi(|\mathbf{x}^{\infty}_a-\mathbf{x}^{\infty}_b|)\right]
\left(\mathbf{w}_a-\mathbf{w}^{\infty}_a\right)\cdot\left(\mathbf{w}^{\infty}_b-\mathbf{w}^{\infty}_a\right) \\
&\hspace{1cm} =\frac{1}{N^2T^*}\sum_{a, b=1}^{N}\phi(|\mathbf{x}_a-\mathbf{x}_b|)\left(\mathbf{w}_a-\mathbf{w}^{\infty}_a\right)\cdot\left[\left(\mathbf{w}_b-\mathbf{w}^{\infty}_b\right)
-\left(\mathbf{w}_a-\mathbf{w}^{\infty}_a\right)\right]\\
&\hspace{1.2cm}+\frac{1}{N^2T^*}\sum_{a, b=1}^{N}\phi(|\mathbf{x}_a-\mathbf{x}_b|)\left(\mathbf{w}_a-\mathbf{w}^{\infty}_a\right)\cdot\left[\left(\frac{1}{F_b}-1\right)\mathbf{w}_b
-\left(\frac{1}{F_a}-1\right)\mathbf{w}_a\right] \\
&\hspace{1.2cm}+\frac{1}{N^2T^*}\sum_{a, b=1}^{N}\left[\phi(|\mathbf{x}_a-\mathbf{x}_b|)-\phi(|\mathbf{x}^{\infty}_a-\mathbf{x}^{\infty}_b|)\right]
\left(\mathbf{w}_a-\mathbf{w}^{\infty}_a\right)\cdot\left(\mathbf{w}^{\infty}_b-\mathbf{w}^{\infty}_a\right) \\
&\hspace{1cm} =\mathcal{I}_{11}+\mathcal{I}_{12} +\mathcal{I}_{13}.
\end{split}
\end{align}
In what follows, we provide estimates for ${\mathcal I}_{1i}$ one by one. \newline

\noindent $\bullet$ Case A (Estimate of $\mathcal{I}_{11}$): By the symmetry of $\phi(|\mathbf{x}_a-\mathbf{x}_b|)$ and \eqref{initialODE-sub-wvt}, we have
\begin{align*}
\mathcal{I}_{11}=&-\frac{1}{2N^2T^*}\sum_{a, b=1}^{N}\phi(|\mathbf{x}_a-\mathbf{x}_b|)\left|\left(\mathbf{w}_b-\mathbf{w}^{\infty}_b\right)
-\left(\mathbf{w}_a-\mathbf{w}^{\infty}_a\right)\right|^2\\
\leq&-\frac{\phi(\mathcal{D}_{\mathbf{x}}^{\infty})}{NT^*}\left[\sum_{a=1}^{N}\left|\mathbf{w}_a-\mathbf{w}^{\infty}_a\right|^2
-\frac{1}{N}\left|\sum_{a=1}^{N}\left(\mathbf{w}_a-\mathbf{w}^{\infty}_a\right)\right|^2\right]\\
=& -\frac{\phi(\mathcal{D}_{\mathbf{x}}^{\infty})}{NT^*}\sum_{a=1}^{N}\left|\mathbf{w}_a-\mathbf{w}^{\infty}_a\right|^2.
\end{align*}

\vspace{0.2cm}

\noindent $\bullet$ Case B (Estimate of $\mathcal{I}_{12}$):  We use the estimate for $\mathcal{I}_{11}$ to see
\begin{align}\label{I20}
\begin{aligned}
\left|\mathcal{I}_{12} \right|\leq&\frac{1}{2N^2T^*}\sum_{a, b=1}^{N}\phi(|\mathbf{x}_a-\mathbf{x}_b|)\left[\left(\mathbf{w}_a-\mathbf{w}^{\infty}_a\right)
-\left(\mathbf{w}_b-\mathbf{w}^{\infty}_b\right)\right]
\cdot\left[\left(\frac{1}{F_b}-1\right)\mathbf{w}_b
-\left(\frac{1}{F_a}-1\right)\mathbf{w}_a\right]\\
\leq&\frac{\phi(\mathcal{D}_{\mathbf{x}}^{\infty})}{4NT^*}\sum_{a=1}^{N}\left|\mathbf{w}_a-\mathbf{w}^{\infty}_a\right|^2
+\frac{2}{N^2 \phi(\mathcal{D}_{\mathbf{x}}^{\infty})T^*}\sum_{a, b=1}^{N}\left|\left(\frac{1}{F_b}-1\right)\mathbf{w}_b
-\left(\frac{1}{F_a}-1\right)\mathbf{w}_a\right|^2\\
\leq&\frac{\phi(\mathcal{D}_{\mathbf{x}}^{\infty})}{4NT^*}\sum_{a=1}^{N}\left|\mathbf{w}_a-\mathbf{w}^{\infty}_a\right|^2
+\frac{2}{N^2\phi(\mathcal{D}_{\mathbf{x}}^{\infty})T^*}\sum_{a, b=1}^{N}\left|\nabla_\mathbf{z}\left[\left(\frac{1}{F(z^2)}-1\right)\mathbf{z}\right]
\right|^2\left|\mathbf{w}_b-\mathbf{w}_a\right|^2.
\end{aligned}
\end{align}
Then, we combine \eqref{I20} and \eqref{I21}, and use Corollary \ref{C3.1} to obtain 
\begin{align*}
\begin{aligned}
\left|\mathcal{I}_{12} \right|\leq&\frac{\phi(\mathcal{D}_{\mathbf{x}}^{\infty})}{4NT^*}\sum_{a=1}^{N}\left|\mathbf{w}_a-\mathbf{w}^{\infty}_a\right|^2
+\frac{2\Lambda_2^2(\mathcal{D}_{\mathbf{w}}(0))\mathcal{L}(0) }{ \phi(\mathcal{D}_{\mathbf{x}}^{\infty})T^*c^{4}} \exp\left\{- \frac{2\Lambda_0(\mathcal{D}_\mathbf{w}(0))\phi(\mathcal{D}^{\infty}_\mathbf{x})}{T^*} t\right\}.
\end{aligned}
\end{align*}

\vspace{0.2cm}

\noindent $\bullet$ Case C (Estimate of $\mathcal{I}_{13}$): We use \eqref{winfty-decay} to have
 \begin{align*}
 \left|\mathcal{I}_{13}\right|=&\frac{1}{2N^2T^*}\Big|\sum_{a, b=1}^{N}\left[\phi(|\mathbf{x}_a-\mathbf{x}_b|)-\phi(|\mathbf{x}^{\infty}_a-\mathbf{x}^{\infty}_b|)\right]
\left[\left(\mathbf{w}_b-\mathbf{w}^{\infty}_b\right)
-\left(\mathbf{w}_a-\mathbf{w}^{\infty}_a\right)\right]\cdot\left(\mathbf{w}^{\infty}_b-\mathbf{w}^{\infty}_a\right)\Big|\nonumber\\
\leq&\frac{\phi(\mathcal{D}_{\mathbf{x}}^{\infty})}{4NT^*}\sum_{a=1}^{N}\left|\mathbf{w}_a-\mathbf{w}^{\infty}_a\right|^2
+\frac{2}{N^2 \phi(\mathcal{D}_{\mathbf{x}}^{\infty})T^*}\sum_{a, b=1}^{N}\left[\phi\right]^2_{\mathrm{Lip}}\left(|\mathbf{x}_a-\mathbf{x}^{\infty}_a|+|\mathbf{x}_b-\mathbf{x}^{\infty}_b|
 \right)^2\left|\mathbf{w}^{\infty}_b-\mathbf{w}^{\infty}_a\right|^2\nonumber\\
\leq&\frac{\phi(\mathcal{D}_{\mathbf{x}}^{\infty})}{4NT^*}\sum_{a=1}^{N}\left|\mathbf{w}_a-\mathbf{w}^{\infty}_a\right|^2
+\frac{8\left[\phi\right]^2_{\mathrm{Lip}}\mathcal{D}^2_{\mathbf{w}^{\infty}}(0)}{ N\phi(\mathcal{D}_{\mathbf{x}}^{\infty})T^*}\sum_{a=1}^{N}|\mathbf{x}_a-\mathbf{x}^{\infty}_a|^2
\exp\left\{-\frac{2\phi(\mathcal{D}^{\infty}_{\mathbf{x}^{\infty}})}{T^*} t\right\}\nonumber.
\end{align*}
Now we collect all the estimates of $\mathcal{I}_{11}$,  $\mathcal{I}_{12}$, and $\mathcal{I}_{13}$ in \eqref{wcvc} to obtain  $(ii)$.
\end{proof}
Finally, we are now ready to verify the classical limit of the RCS model.
\begin{theorem}\label{T3.2}
Suppose that the initial data satisfy 
\begin{align}\label{indevi-RCS}
  \Delta^c(0)\leq \mathcal{O}(c^{-4}),  
\end{align}
and let $\left(\mathbf{x}_a, \mathbf{w}_a\right)$ and $\left(\mathbf{x}^{\infty}_a, \mathbf{w}^{\infty}_a\right)$ be global solutions to \eqref{CSmodel-wc} and \eqref{CSmodel-vc}, respectively. 
Then,  one has 
\begin{align}\label{limrat-CS}
\sup_{0 \leq t < \infty} \Delta^c(t)\leq \mathcal{O}(c^{-4}).
\end{align}
\end{theorem}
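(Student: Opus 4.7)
The approach is a continuity/bootstrap argument built on Lemma~\ref{L4.4}, exploiting the fact that the $W$-component has a genuine dissipation while $X$ does not. Set
\[
X(t):=\frac{1}{N}\sum_{a=1}^{N}|\mathbf{x}_{a}-\mathbf{x}_{a}^{\infty}|^{2},\qquad W(t):=\frac{1}{N}\sum_{a=1}^{N}|\mathbf{w}_{a}-\mathbf{w}_{a}^{\infty}|^{2},
\]
so that $\Delta^{c}=X+W$. Squaring Lemma~\ref{L4.4}(i), summing over $a$, and applying Cauchy--Schwarz together with Corollary~\ref{C3.1} (which gives $\tfrac{1}{N}\sum_{a}|\mathbf{w}_{a}|^{2}=\mathcal{L}(t)\leq \mathcal{L}(0)\exp\{-\tfrac{2\Lambda_{0}\phi}{T^{*}}t\}$) produces the square-root inequality
\[
\frac{d\sqrt{X}}{dt}\ \leq\ \sqrt{W(t)}\ +\ \frac{\Lambda_{1}(\mathcal{D}_{\mathbf{w}}(0))}{c^{2}}\sqrt{\mathcal{L}(t)}.
\]
The last term integrates to $O(c^{-2})$ thanks to the exponential decay of $\mathcal{L}$, so the time-uniform control of $\sqrt{X}$ reduces to the time-integrability of $\sqrt{W}$.

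To bound $W$ I run the bootstrap: suppose $X(s)\leq M/c^{4}$ on $[0,T)$ for a constant $M>0$ to be fixed. Inserting this into Lemma~\ref{L4.4}(ii) gives
\[
\frac{dW}{dt}\ \leq\ -\mu W\ +\ \frac{A_{1}}{c^{4}}\,e^{-\alpha_{1}t}\ +\ \frac{A_{2}M}{c^{4}}\,e^{-\alpha_{2}t},\qquad \mu=\frac{\phi(\mathcal{D}_{\mathbf{x}}^{\infty})}{T^{*}},
\]
with $\alpha_{1},\alpha_{2}>0$ the decay rates supplied by Corollary~\ref{C3.1} and Lemma~\ref{CS-fe}. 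Grönwall with integrating factor $e^{\mu t}$, together with the hypothesis $W(0)\leq O(c^{-4})$, delivers
\[
W(t)\ \leq\ \frac{K_{1}+K_{2}M}{c^{4}}\,e^{-\eta t},\qquad 0<\eta<\min\{\mu,\alpha_{1},\alpha_{2}\},
\]
for absolute constants $K_{1},K_{2}$ independent of $c$ and $M$. Hence $\int_{0}^{\infty}\!\sqrt{W(s)}\,ds\leq 2\sqrt{K_{1}+K_{2}M}/(c^{2}\eta)$, and integrating the $\sqrt{X}$-inequality over $[0,t]$ yields
\[
\sqrt{X(t)}\ \leq\ \frac{C_{0}+C_{1}\sqrt{M}}{c^{2}}\qquad\Longrightarrow\qquad X(t)\ \leq\ \frac{K_{3}+K_{4}M}{c^{4}},
\]
for new absolute constants $K_{3},K_{4}$.

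The bootstrap closes provided $M$ can be chosen with $K_{1}+K_{2}M<M$ and $K_{3}+K_{4}M<M$. When both $K_{2}$ and $K_{4}$ are strictly less than one, any sufficiently large $M$ will do, and a standard continuity argument then extends $[0,T)$ to $[0,\infty)$; adding the $X$- and $W$-bounds gives $\sup_{t\geq 0}\Delta^{c}(t)\leq O(c^{-4})$, i.e., \eqref{limrat-CS}. The main difficulty — and the reason for the implicit ``suitable framework'' invoked in the statement — is precisely the verification $K_{2},K_{4}<1$: since $X$ carries no intrinsic dissipation, the $X\!\to\!W$ feedback (the term $\mathcal{I}_{13}$ in the proof of Lemma~\ref{L4.4}) and the $W\!\to\!X$ feedback (through $\sqrt{W}$) must together be a strict contraction, which translates into a smallness requirement on $\mathcal{D}_{\mathbf{w}^{\infty}}(0)$ and $[\phi]_{\mathrm{Lip}}$ relative to the flocking rates. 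The $\sqrt{X}$-reformulation introduced in the first step is the crucial trick: it converts the quadratic coupling $\sqrt{XW}$ into a linear one, allowing the exponential decays furnished by Theorem~\ref{T3.1}, Corollary~\ref{C3.1} and Lemma~\ref{CS-fe} to absorb all ambient constants.
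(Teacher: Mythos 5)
Your decomposition $\Delta^c=X+W$ and the reduction of the question to controlling the $X\!\leftrightarrow\!W$ feedback is the right starting point, and the $\sqrt{X}$ reformulation is a clean way to linearize the coupling. However, the proof does not close under the theorem's hypotheses: you correctly identify that the bootstrap from $t=0$ requires the feedback constants $K_2,K_4$ to be less than one, and you then propose to impose a smallness condition on $[\phi]_{\mathrm{Lip}}$ and $\mathcal{D}_{\mathbf{w}^\infty}(0)$. But no such smallness is assumed (or available) in Theorem~\ref{T3.2}; the ``framework'' inherited by the theorem consists only of the flocking hypotheses of Theorem~\ref{T3.1} and Lemma~\ref{CS-fe}, under which $[\phi]_{\mathrm{Lip}}\mathcal{D}_{\mathbf{w}^\infty}(0)$ can be arbitrarily large. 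So as written your argument establishes a weaker statement and leaves a genuine gap.

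The paper closes the loop by a different device that does not need smallness: a calibrated time-splitting. Choose $t_1$ \emph{independent of $c$} so that
\[
\frac{8[\phi]^2_{\mathrm{Lip}}\big[\mathcal{D}_{\mathbf{w}^\infty}(0)T^*\big]^2}{\phi(\mathcal{D}_{\mathbf{x}}^{\infty})[\phi(\mathcal{D}^{\infty}_{\mathbf{x}^\infty})]^3}
\exp\Big\{-\tfrac{2\phi(\mathcal{D}^{\infty}_{\mathbf{x}^\infty})}{T^*}t_1\Big\}=\tfrac12 .
\]
On $[0,t_1]$ one uses a finite-time Gr\"onwall inequality $\tfrac{d}{dt}\Delta^c\leq C\Delta^c+\mathcal{O}(c^{-4})$: the resulting factor $e^{Ct_1}$ is $c$-independent, so no contractivity is needed and $\Delta^c(t_1)=\mathcal{O}(c^{-4})$ holds for free. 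On $(t_1,\infty)$ one bounds $X(t)\leq \tfrac{2}{N}(t-t_1)^2\max_{t_1\leq s\leq t}\sum_a|\mathbf{w}_a-\mathbf{w}^\infty_a|^2+\mathcal{O}(c^{-4})$ and substitutes into Lemma~\ref{L4.4}(ii); the crucial point is that the kernel $2(s-t_1)^2$ vanishes at $s=t_1$, so $\int_{t_1}^{\infty}2(s-t_1)^2e^{-\alpha s}\,ds=e^{-\alpha t_1}\cdot 4/\alpha^3$ carries the free factor $e^{-\alpha t_1}$. The choice of $t_1$ above makes the resulting feedback coefficient exactly $\tfrac12$, no matter how large $[\phi]_{\mathrm{Lip}}$ or $\mathcal{D}_{\mathbf{w}^\infty}(0)$ is. The bootstrap then closes with a universal constant, and one feeds the resulting exponential decay of $W$ back into Lemma~\ref{L4.4}(i) (integrating against $(1+s)^{-2}$ and $(1+s)^2$ as a Cauchy--Schwarz split) to obtain the uniform-in-time $\mathcal{O}(c^{-4})$ bound on $X$. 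In short, the missing idea is to buy contractivity for free by delaying the bootstrap past a $c$-independent time $t_1$ at which the exponential prefactor has already decayed; this replaces the smallness assumption you had to introduce.
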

\begin{proof} For the desired estimate \eqref{limrat-CS}, we divide the proof into three steps: \newline
\begin{enumerate}
\item
First, we show \eqref{limrat-CS} holds in the time interval $[0, t_1]$  via the Gr\"{o}nwall differential inequality as in \cite{Ahn-Ha-Kim-JMP-2022}. Here $t_1$ is a constant satisfying
\begin{align*}
\frac{8\left[\phi\right]^2_{\mathrm{Lip}}\big[\mathcal{D}_{\mathbf{w}^{\infty}}(0)T^*\big]^2}{ \phi(\mathcal{D}_{\mathbf{x}}^{\infty})[\phi(\mathcal{D}^{\infty}_{\mathbf{x}^{\infty}})]^3}
\exp\left\{-\frac{2\phi(\mathcal{D}^{\infty}_{\mathbf{x}^{\infty}})}{T^*} t_1\right\}=\frac12.
\end{align*}
\vspace{0.2cm}
\item
Next, we will give a upper bound of $\displaystyle c^4N\sum_{a=1}^N\left|\mathbf{x}^c_a(t)-\mathbf{x}^{\infty}_a(t)\right|^2$ for $t\in (t_1, \infty)$.
\vspace{0.2cm}
\item
Finally,  we combine  the previous two steps to derive the exponential time-decay of $\displaystyle c^4\sum_{a=1}^N\left|\mathbf{w}^c_a(t)-\mathbf{w}^{\infty}_a(t)\right|^2$  and show that   \eqref{limrat-CS} holds for $t\in [0, \infty)$.
\end{enumerate}

\vspace{0.2cm}

In the sequel, we perform the above steps one by one. \newline

\noindent $\bullet$ Step A:~By the same method as in \cite{Ahn-Ha-Kim-JMP-2022} to prove \eqref{limrat-CS} in a finite time interval,  we can verify the validity of \eqref{limrat-CS}  for $t\in [0, t_1]$. \newline

\noindent $\bullet$ Step B:~For $t\in(t_1, \infty)$, we use \eqref{wc-decay} and $(i)$ in Lemma \ref{L4.4} to obtain
\begin{align*}
\left|\mathbf{x}_a(t)-\mathbf{x}^{\infty}_a(t)\right|\leq&\left|\mathbf{x}_a(t_1)-\mathbf{x}^{\infty}_a(t_1)\right|
 +\int_{t_1}^t\left|\mathbf{w}_a(s)-\mathbf{w}^{\infty}_a(s)\right|\, \mathrm{d}s\\
 &+ \frac{\Lambda_1(\mathcal{D}_{\mathbf{x}}^{\infty})\mathcal{D}_{\mathbf{w}}^{\infty}(0)T^*}{c^2\phi(\mathcal{D}^{\infty}_{\mathbf{x}})
 -2\Lambda_2(\mathcal{D}_{\mathbf{w}}(0))}\exp\left\{-\left(\frac{\phi(\mathcal{D}^{\infty}_{\mathbf{x}})}{T^*}
 -\frac{2\Lambda_2(\mathcal{D}_{\mathbf{w}}(0))}{c^{2}T^*}\right) t_1\right\},\quad a \in [N].
\end{align*}
We square the above inequality and sum up the resulting relation over $a$ and use Step A to get 
\begin{align}\label{wrt1}
\frac{1}{N}\sum_{a=1}^N\left|\mathbf{x}_a(t)-\mathbf{x}^{\infty}_a(t)\right|^2\leq&\frac{2}{N}(t-t_1)^2\max_{t_1\leq s\leq t}\sum_{a=1}^N\left|\mathbf{w}_a(s)-\mathbf{w}^{\infty}_a(s)\right|^2+\mathcal{O}(c^{-4}).
\end{align}
We apply \eqref{wrt1},  $(ii)$ in Lemma \ref{L4.4}, and use the definition of $t_1$ to obtain
\begin{align*}
\begin{aligned}
& \frac{1}{N}\sum_{a=1}^N \left|\mathbf{w}_a(t)-\mathbf{w}^{\infty}_a(t)\right|^2 \\
& \hspace{1cm} \leq \frac{16\left[\phi\right]^2_{\mathrm{Lip}}\mathcal{D}^2_{\mathbf{w}^{\infty}}(0)}{ N\phi(\mathcal{D}_{\mathbf{x}}^{\infty})T^*}\max_{t_1\leq s\leq t}\sum_{a=1}^N\left|\mathbf{w}_a(s)-\mathbf{w}^{\infty}_a(s)\right|^2 \\
&\hspace{1.4cm}\times\int_{t_1}^t
2(s-t_1)^2\exp\left\{-\frac{2\phi(\mathcal{D}^{\infty}_{\mathbf{x}^{\infty}})}{T^*} s\right\}\, \mathrm{d}s+\mathcal{O}(c^{-4})\\
& \hspace{1cm}  \leq\frac{1}{2N}\max_{t_1\leq s\leq t}\sum_{a=1}^N\left|\mathbf{w}_a(s)-\mathbf{w}^{\infty}_a(s)\right|^2+\mathcal{O}(c^{-4}),\qquad \forall~ t\geq t_1.
\end{aligned}
\end{align*}
This implies 
\begin{align*}
\frac{1}{N}\max_{t_1\leq s\leq t}\sum_{a=1}^N\left|\mathbf{w}_a(s)-\mathbf{w}^{\infty}_a(s)\right|^2\leq\mathcal{O}(c^{-4}),\qquad \forall~ t\geq t_1.
\end{align*}
Then, we can further estimate \eqref{wrt1} as
\begin{align}\label{wat1}
\frac{1}{N}\sum_{a=1}^N\left|\mathbf{x}_a(t)-\mathbf{x}^{\infty}_a(t)\right|^2\leq&\mathcal{O}(c^{-4})(1+t-t_1)^2.\qquad \forall~ t\geq t_1.
\end{align}

\vspace{0.2cm}

\noindent $\bullet$ Step C:~Note that 
$$\frac{1}{N}\sum_{a=1}^N\left|\mathbf{x}_a(t)-\mathbf{x}^{\infty}_a(t)\right|^2\leq\mathcal{O}(c^{-4}),\qquad \forall~ t\in [0, t_1]. $$
Now, we combine  \eqref{wat1} and $(ii)$ in Lemma \ref{L4.4} to obtain
\begin{align*}
&\frac{1}{N}\frac{\mathrm{d}}{\mathrm{d} t}\sum_{a=1}^N \left|\mathbf{w}_a-\mathbf{w}^{\infty}_a\right|^2 \\
& \hspace{0.5cm}\leq -\frac{\phi(\mathcal{D}_{\mathbf{x}}^{\infty})}{NT^*}\sum_{a=1}^{N}\left|\mathbf{w}_a-\mathbf{w}^{\infty}_a\right|^2
+\mathcal{O}(c^{-4}) \exp\left\{-\frac{2\Lambda_0(\mathcal{D}_\mathbf{w}(0))\phi(\mathcal{D}_{\mathbf{x}}^{\infty})}{T^*} t\right\}\\
&\hspace{0.5cm} +\mathcal{O}(c^{-4})(1+|t-t_1|)^2
\exp\left\{-\frac{2\phi(\mathcal{D}^{\infty}_{\mathbf{x}^{\infty}})}{T^*} t\right\},\qquad \forall~ t\geq 0.
\end{align*}
By Gronwall's lemma, we use  \eqref{indevi-RCS} to have
\begin{align} 
\begin{aligned} \label{expo-tdecay}
&\frac{1}{N}\sum_{a=1}^N \left| \mathbf{w}_a-\mathbf{w}^{\infty}_a\right|^2 \leq \mathcal{O}(c^{-4})\\
& \times  \exp\left\{-\min\left\{\frac{\phi(\mathcal{D}_{\mathbf{x}}^{\infty})}{T^*}t, \frac{2\Lambda_0(\mathcal{D}_\mathbf{w}(0))\phi(\mathcal{D}_{\mathbf{x}}^{\infty})}{T^*} t , 
\frac{2\phi(\mathcal{D}^{\infty}_{\mathbf{x}^{\infty}})}{T^*} t-2\ln(1+|t-t_1|)\right\}\right\}.
\end{aligned}
\end{align}
On the other hand,  we use $(i)$ in Lemma \ref{L4.4} to find 
\begin{align*}
\left|\mathbf{x}_a-\mathbf{x}^{\infty}_a\right|\leq \left|x_a^{\mathrm{in}}-x_a^{\infty,\mathrm{in}}\right|+\int_0^t\left|\mathbf{w}_a-\mathbf{w}^{\infty}_a\right|\, \mathrm{d}s+ \frac{\Lambda_1(\mathcal{D}_{\mathbf{x}}^{\infty})}{c^2}\int_0^t\left|\mathbf{w}_a\right|\, \mathrm{d}s,\qquad a \in [N].
\end{align*}
Then, we use Corollary \ref{C3.1},  \eqref{indevi-RCS}, and \eqref{expo-tdecay} to find
\begin{align*}\label{xainfty-decay}
 &\frac{1}{N}\sum_{a=1}^N\left|\mathbf{x}_a(t)-\mathbf{x}^{\infty}_a(t)\right|^2\leq \frac{3}{N}\sum_{a=1}^N\left|x_a^{\mathrm{in}}-x_a^{\infty,\mathrm{in}}\right|^2 \nonumber\\
 & \hspace{1.5cm} +\frac{3}{N} \int_0^t(1+s)^{-2} ds\int_0^t(1+s)^2\sum_{a=1}^N\left|\mathbf{w}_a(s)-\mathbf{w}^{\infty}_a(s)\right|^2\, \mathrm{d}s\nonumber\\
& \hspace{1.5cm} +3 \frac{[\Lambda_1(\mathcal{D}_{\mathbf{x}}^{\infty})]^2}{Nc^4}\int_0^t(1+s)^{-2} ds\int_0^t(1+s)^2\sum_{a=1}^N\left|\mathbf{w}_a(s)\right|^2\, \mathrm{d}s\\
&\hspace{1.5cm} \leq \mathcal{O}(c^{-4})+\mathcal{O}(c^{-4})\int_0^t(1+s)^2 \exp\left\{-\min\left\{\frac{\phi(\mathcal{D}_{\mathbf{x}}^{\infty})}{T^*}s, \right. \right.\nonumber\\
&\hspace{1.5cm} \left.\left. \frac{2\Lambda_0(\mathcal{D}_\mathbf{w}(0))\phi(\mathcal{D}_{\mathbf{x}}^{\infty})}{T^*} s,
\frac{2\phi(\mathcal{D}^{\infty}_{\mathbf{x}^{\infty}})}{T^*} s-2\ln(1+|s-t_1|)\right\}\right\}\, \mathrm{d}s\nonumber\\
&\hspace{1.5cm} +\mathcal{O}(c^{-4})\int_0^t(1+s)^2 \exp\left\{-\frac{2\Lambda_0(\mathcal{D}_\mathbf{w}(0))\phi(\mathcal{D}_{\mathbf{x}}^{\infty})}{T^*} s\right\}\, \mathrm{d}s\nonumber\\
&\hspace{1.5cm} \leq\mathcal{O}(c^{-4}).\nonumber
\end{align*}
\end{proof}
\begin{remark} \label{R3.3}
Note that we have replaced the previous condition $\Delta^c(0) = 0$ in earlier works \cite{Ahn-Ha-Kim-JMP-2022, Ha-Kim-Ruggeri-ARMA-2020} on the initial data with a relaxed condition \eqref{indevi-RCS}. More precisely, in \cite{Ha-Kim-Ruggeri-ARMA-2020}, under the assumption that the initial data in the  Cauchy problems \eqref{CSmodel-wc} and \eqref{CSmodel-vc} are the same,  the authors derive a finite-in-time classical limit using a Gronwall type differential inequality:
\[ 
\frac{d}{dt} \Delta^c \leq C \Delta^c + {\mathcal O}(c^{-4}), \quad 0 \leq t \leq T.
\]
This yields
\[ \sup_{0 \leq t \leq T} \Delta^c(t) \leq  C(T)  {\mathcal O}(c^{-4}). \]
On the other hand, for a uniform-time classical limit, the authors considered solutions exhibiting flocking estimates, and they showed that for any $\varepsilon > 0$, there exists a positive constant $T_\varepsilon$ such that  
\[
\Delta^c(t) \leq \Delta^c(T_\varepsilon) + \varepsilon, \quad t \geq T_\varepsilon.
\]
This certainly yields
\[
\limsup_{c \to \infty} \sup_{0 \leq t < \infty} \Delta^c(t) \leq \lim_{c \to \infty} \sup_{0 \leq t \leq T_\varepsilon} \Delta^c(t) + \varepsilon = \varepsilon.
\]
Since $\varepsilon$ was arbitrary, we have
\[  \limsup_{c \to \infty} \sup_{0 \leq t < \infty} \Delta^c(t)  = 0. \]
Therefore, our result in Theorem \ref{T3.2}  provides a direct optimal convergence rate ${\mathcal O}(1) c^{-4}$ as in the finite-time classical limit in a weaker initial assumption \eqref{indevi-RCS}. 
\end{remark}

%
%

\section{The  kinetic RCS model} \label{sec:4}
\setcounter{equation}{0}
In this section, we study quantitative estimates on the flocking dynamics and the uniform-time  classical limit ($c\rightarrow \infty$) from the kinetic RCS model \eqref{RKCS} to the classical kinetic CS model \eqref{KCS}.  

For a sufficiently large $N$, the RCS model \eqref{CSmodel} can be effectively approximated by the corresponding mean-field model:
\begin{equation} \label{RKCS}
\begin{cases}
\displaystyle \partial_t f+\frac{\mathbf{w}}{F}\cdot \nabla_xf+\nabla_\mathbf{w}\cdot\left(L[f]f\right)=0,\qquad (t,\mathbf{x},\mathbf{w})\in \mathbb{R}_+\times\mathbb{R}^6, \\
\displaystyle L[f](t,\mathbf{x},\mathbf{w}):=-\int_{\mathbb{R}^6}\phi(|\mathbf{x}_*-\mathbf{x}|)\left(\frac{\mathbf{w}}{F}-\frac{\mathbf{w}_*}{F_*}\right)f(t, \mathbf{x}_*, \mathbf{w}_*)\, \mathrm{d} \mathbf{x}_*\mathrm{d} \mathbf{w}_*, \\
\displaystyle f \Big|_{t=0_+}=f^{\mathrm{in}},
\end{cases}
\end{equation} 
where $f=f(t,\mathbf{x},\mathbf{w})$ is the one-particle distribution function, $$F=F(w^2)=\left(1+\frac{D+2}{2\gamma^*}\Gamma\right)\Gamma.$$ Suppose that $f$ tends to $f^{\infty}$ in a suitable weak sense, as $c\rightarrow \infty$. Then, the limit distribution function $f^{\infty}$ satisfies the kinetic CS model:
\begin{equation} \label{KCS}
\begin{cases}
\displaystyle \partial_t f^{\infty}+\mathbf{w}\cdot \nabla_\mathbf{x}f^{\infty}+\nabla_\mathbf{w}\cdot\left(L[f^{\infty}]f^{\infty}\right)=0,\qquad (t,\mathbf{x},\mathbf{w})\in \mathbb{R}_+\times\mathbb{R}^6, \\
\displaystyle L[f^{\infty}](t,\mathbf{x},\mathbf{w}):=-\int_{\mathbb{R}^6}\phi(|\mathbf{x}_*-\mathbf{x}|)(\mathbf{w}-\mathbf{w}_*)f^{\infty}(t, \mathbf{x}_*, \mathbf{w}_*)\, \mathrm{d} \mathbf{x}_*\mathrm{d} \mathbf{w}_*, \\
\displaystyle f^{\infty} \Big |_{t=0_+}=f^{\infty,\mathrm{in}}.
\end{cases}
\end{equation}
%
%
\subsection{Asymptotic flocking dynamcis} \label{sec:4.1}
In this subsection, we recall \textit{a priori} balance laws and global well-posedness for the kinetic RCS model \eqref{RKCS} to be used in later sections.

Define an energy $E$ as follows:
\[ E:=c^2\left(\Gamma-1\right)+\frac{(D+2)k_B T^*}{2m} \left(\Gamma^2-\log\Gamma\right). \]
\begin{lemma} 
\emph{\cite{Ahn-Ha-Kim-JMP-2022}} \label{L5.1}
Let $f=f(t,\mathbf{x},\mathbf{w})$ be a global smooth solution to \eqref{RKCS} decaying sufficiently fast to zero in $\mathbf{w}-$variable. Then, we have the following assertions: \newline
\begin{enumerate}
  \item The conservation laws are given by
  $$\frac{\mathrm{d}}{\mathrm{d}t}\int_{\mathbb{R}^6}f\mathrm{d} \mathbf{x}\mathrm{d} \mathbf{w}=0,\qquad \frac{\mathrm{d}}{\mathrm{d}t}\int_{\mathbb{R}^6}\mathbf{w}f\mathrm{d} \mathbf{x}\mathrm{d} \mathbf{w}=0, \quad  t>0.$$
  \item Monotonicity of the energy $E$ is given as
  \begin{align*}
\frac{\mathrm{d}}{\mathrm{d}t}\int_{\mathbb{R}^6}Ef\mathrm{d} \mathbf{x}\mathrm{d} \mathbf{w}=&-\frac{1}{2}\int_{\mathbb{R}^{12}}\phi(|\mathbf{x}_*-\mathbf{x}|)\left|\frac{\mathbf{w}}{F}-\frac{\mathbf{w}_*}{F_*}\right|^2 f(\mathbf{x}_*, \mathbf{w}_*)f(\mathbf{x},\mathbf{w})\, \mathrm{d} \mathbf{x}_*\mathrm{d} \mathbf{w}_*\mathrm{d} \mathbf{x}\mathrm{d} \mathbf{w}, \quad  t>0.
\end{align*} 
\end{enumerate}
  \end{lemma}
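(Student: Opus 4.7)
The plan is to verify (1) and (2) by the standard ``multiply-by-test-function and integrate'' recipe, with the crucial input being a Hamiltonian-type identity
\[
\nabla_{\mathbf{w}}E \;=\; \frac{\mathbf{w}}{F},
\]
which plays the role of $\nabla_{\mathbf{v}}(v^2/2)=\mathbf{v}$ in the classical Cucker--Smale energy identity. Assuming this identity and sufficient decay of $f$ in $\mathbf{w}$ (so that all boundary terms at infinity vanish), the three assertions follow by integration by parts and a symmetry swap.

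For the mass conservation, I integrate the PDE over $(\mathbf{x},\mathbf{w})\in\mathbb{R}^{6}$. The transport term $\frac{\mathbf{w}}{F}\cdot\nabla_{\mathbf{x}}f$ integrates to zero in $\mathbf{x}$ by integration by parts (the coefficient depends only on $\mathbf{w}$), and the force term $\nabla_{\mathbf{w}}\cdot(L[f]f)$ integrates to zero in $\mathbf{w}$ by divergence theorem. For momentum, I multiply by $\mathbf{w}$ and integrate. The transport piece vanishes as above since $\mathbf{w}$ and $\mathbf{w}/F$ depend only on $\mathbf{w}$. Integrating the force piece by parts in $\mathbf{w}$ using $\partial_{w_i}w_j=\delta_{ij}$ yields $\int L[f]\,f\,\mathrm{d}\mathbf{x}\mathrm{d}\mathbf{w}$, and expanding $L[f]$ gives a double integral whose integrand becomes antisymmetric under the swap $(\mathbf{x},\mathbf{w})\leftrightarrow(\mathbf{x}_{*},\mathbf{w}_{*})$ thanks to $\phi(|\mathbf{x}_{*}-\mathbf{x}|)=\phi(|\mathbf{x}-\mathbf{x}_{*}|)$; hence it vanishes.

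For the energy identity, multiply by $E$ and integrate. Since $E$ depends only on $\mathbf{w}$ (through $\Gamma$, hence through $w^{2}$), the transport term again integrates to zero in $\mathbf{x}$. Integration by parts in $\mathbf{w}$ on the force term produces
\[
\int_{\mathbb{R}^{6}}\nabla_{\mathbf{w}}E\cdot L[f]\,f\,\mathrm{d}\mathbf{x}\mathrm{d}\mathbf{w}
\;=\;\int_{\mathbb{R}^{6}}\frac{\mathbf{w}}{F}\cdot L[f]\,f\,\mathrm{d}\mathbf{x}\mathrm{d}\mathbf{w},
\]
where I invoked the key identity. Expanding $L[f]$ and symmetrizing $(\mathbf{x},\mathbf{w})\leftrightarrow(\mathbf{x}_{*},\mathbf{w}_{*})$ converts the bilinear expression $\frac{\mathbf{w}}{F}\cdot\bigl(\frac{\mathbf{w}}{F}-\frac{\mathbf{w}_{*}}{F_{*}}\bigr)$ into $\tfrac{1}{2}\bigl|\frac{\mathbf{w}}{F}-\frac{\mathbf{w}_{*}}{F_{*}}\bigr|^{2}$, producing precisely the claimed dissipation.

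The main obstacle is the identity $\nabla_{\mathbf{w}}E=\mathbf{w}/F$. I would verify it by chain rule through $\Gamma=\Gamma(w^{2})$. Starting from $w^{2}=c^{2}(\Gamma^{2}-1)\bigl(1+\tfrac{D+2}{2\gamma^{*}}\Gamma\bigr)^{2}$, implicit differentiation gives
\[
\frac{\mathrm{d}\Gamma}{\mathrm{d}w^{2}}
=\frac{1}{2c^{2}\bigl(1+\tfrac{D+2}{2\gamma^{*}}\Gamma\bigr)\bigl[\tfrac{D+2}{\gamma^{*}}\Gamma^{2}+\Gamma-\tfrac{D+2}{2\gamma^{*}}\bigr]},
\]
while differentiating $E=c^{2}(\Gamma-1)+\tfrac{(D+2)c^{2}}{2\gamma^{*}}(\Gamma^{2}-\log\Gamma)$ gives
\[
\frac{\mathrm{d}E}{\mathrm{d}\Gamma}=\frac{c^{2}}{\Gamma}\Bigl[\Gamma+\tfrac{D+2}{\gamma^{*}}\Gamma^{2}-\tfrac{D+2}{2\gamma^{*}}\Bigr].
\]
Multiplying and then multiplying by $2\mathbf{w}$ telescopes to $\mathbf{w}/[\Gamma(1+\tfrac{D+2}{2\gamma^{*}}\Gamma)]=\mathbf{w}/F$, as desired. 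Once this is in hand, the three assertions follow along the lines sketched above; in fact the computation is essentially the reason $E$ was chosen with that particular logarithmic correction in the first place.
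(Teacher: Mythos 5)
Your proof is correct, and it follows the natural (and, modulo presentation, standard) route that the cited reference \cite{Ahn-Ha-Kim-JMP-2022} uses: integrate against the test functions $1$, $\mathbf{w}$, and $E(w^2)$, kill the transport terms because the multipliers and $\mathbf{w}/F$ are $\mathbf{x}$-independent, integrate the force divergence by parts in $\mathbf{w}$, and then symmetrize the resulting double integral. The heart of the matter is exactly the identity $\nabla_{\mathbf{w}}E=\mathbf{w}/F$, and your verification of it is correct: with $E=c^{2}(\Gamma-1)+\frac{(D+2)c^{2}}{2\gamma^{*}}(\Gamma^{2}-\log\Gamma)$ one finds $\frac{\mathrm{d}E}{\mathrm{d}\Gamma}=\frac{c^{2}}{\Gamma}\bigl[\Gamma+\frac{D+2}{\gamma^{*}}\Gamma^{2}-\frac{D+2}{2\gamma^{*}}\bigr]$, and combined with the implicit differentiation $1=2c^{2}\bigl(1+\frac{D+2}{2\gamma^{*}}\Gamma\bigr)\bigl[\Gamma+\frac{D+2}{\gamma^{*}}\Gamma^{2}-\frac{D+2}{2\gamma^{*}}\bigr]\partial_{w^{2}}\Gamma$ (which appears verbatim in the paper's proof of Lemma \ref{coer-F}), the bracket cancels and one is left with $\frac{\mathrm{d}E}{\mathrm{d}w^{2}}=\frac{1}{2F}$, hence $\nabla_{\mathbf{w}}E=\mathbf{w}/F$. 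Since the paper states this lemma without proof, there is nothing further to compare; your argument is complete and sound.
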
 

  \vspace{0.5cm}
 Before we state a global well-posedness of  \eqref{RKCS} with a compactly supported initial datum, we define $\mathbf{x}-$support, $\mathbf{w}-$support, and their diameters as follows:
  \begin{align*}
  & \Omega_\mathbf{x}(f(t)):= \mathrm{supp}_\mathbf{x}f(t,\cdot,\cdot),\hspace{2.1cm} \Omega_\mathbf{w}(f(t)):= \mathrm{supp}_\mathbf{w}f(t,\cdot,\cdot), \\
  &\Omega_\mathbf{v}(f(t)):= \left\{\frac{\mathbf{w}}{F}: \mathbf{w}\in \Omega_\mathbf{w}(f(t))\right\},\qquad\mathcal{D}_\mathbf{x}(t):= \sup_{\mathbf{x},\mathbf{x}_*\in \Omega_\mathbf{x}(f(t))}|\mathbf{x}_*-\mathbf{x}|,\\
  & \mathcal{D}_\mathbf{w}(t):= \sup_{\mathbf{w},\mathbf{w}_*\in \Omega_\mathbf{w}(f(t))}|\mathbf{w}_*-\mathbf{w}|,\hspace{1.2cm} \mathcal{D}_\mathbf{v}(t):= \sup_{\mathbf{v},\mathbf{v}_*\in \Omega_\mathbf{v}(f(t))}|\mathbf{v}_*-\mathbf{v}|.
  \end{align*} 
In the following proposition, we recall a global existence and uniqueness result of \eqref{RKCS}.
\begin{proposition} 
\emph{\cite{Ahn-Ha-Kim-JMP-2022}}
Suppose that the initial datum $f^{in}\in C^1(\mathbb{R}^6)$ has a compact support. Then, for any fixed $T\in (0, \infty)$, there exists a unique classical solution $f\in C^1([0, T)\times\mathbb{R}^6)$ to  \eqref{RKCS}.
\end{proposition}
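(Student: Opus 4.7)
The plan is to construct the $C^1$ solution locally by a Picard iteration on the characteristic formulation, then extend it to arbitrary $T<\infty$ using linear‑in‑time support bounds. Given any nonnegative trial density $g\in C([0,T];C^1_c(\mathbb{R}^6))$, define the characteristic flow $(X(t;x,w),W(t;x,w))$ by
\begin{equation*}
\dot X=\frac{W}{F(|W|^2)},\qquad \dot W=L[g](t,X,W),\qquad (X(0),W(0))=(x,w),
\end{equation*}
and set $\Phi(g)(t,X(t),W(t))=f^{\mathrm{in}}(x,w)\exp\bigl(-\int_0^t(\nabla_{\mathbf{w}}\cdot L[g])(s,X(s),W(s))\,\mathrm{d}s\bigr)$. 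A fixed point of $\Phi$ is a classical solution of \eqref{RKCS}.

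The local‑in‑time existence will rest on three elementary facts. First, the position drift $\mathbf{w}/F(|\mathbf{w}|^2)$ is bounded by $c$ uniformly in $\mathbf{w}$, and it is $C^1$ with Lipschitz constants depending only on a bound for $|\mathbf{w}|$; this is precisely the computation already performed in the proof of Lemma~\ref{coer-F}, where the Jacobian of $\mathbf{w}\mapsto \mathbf{w}/F(|\mathbf{w}|^2)$ is shown to be nondegenerate on bounded sets. Second, for any compactly supported $g$ with $\|g\|_{L^1}=1$, the forcing $L[g]$ is bounded by $2c\|\phi\|_\infty$ and Lipschitz in $(\mathbf{x},\mathbf{w})$ with constants controlled by $[\phi]_{\mathrm{Lip}}$, the $\mathbf{w}$‑support of $g$ and the smoothness of $F$. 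Third, the $\mathbf{x}$‑ and $\mathbf{w}$‑supports of $\Phi(g)$ are determined by the ODE system above, hence remain compact as long as those of $g$ do. These together give a standard Gronwall‑type contraction for $\Phi$ on some short interval $[0,T_*]$ in the $L^\infty$ metric restricted to a fixed compact set, producing a unique local $C^1$ solution (higher regularity being propagated by differentiating the characteristic system against initial data).

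To pass from local to global, I would control the supports a~priori: along any characteristic $|\dot X|\le c$ and $|\dot W|\le 2c\|\phi\|_\infty\|f^{\mathrm{in}}\|_{L^1}$, so on any prescribed $[0,T]$ the diameters satisfy $\mathcal D_{\mathbf{x}}(t),\mathcal D_{\mathbf{w}}(t)\le C(T)$. Because the Lipschitz constants entering the contraction estimate depend only on these support bounds, the local existence time $T_*$ is bounded below uniformly on $[0,T]$, so iteratively extending the local solution yields the global $C^1$ solution on $[0,T)\times\mathbb{R}^6$. Uniqueness is obtained from the same Gronwall comparison applied to any two solutions sharing the initial datum $f^{\mathrm{in}}$. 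The main technical subtlety is a careful handling of the pushforward weight, i.e.\ the exponential of $-\int_0^t\nabla_{\mathbf{w}}\cdot L[g]\,\mathrm{d}s$, which involves the Jacobian $\nabla_{\mathbf{w}}(\mathbf{w}/F(|\mathbf{w}|^2))$; however this Jacobian is exactly the object analyzed in Lemma~\ref{coer-F}, where it is shown to be bounded and bounded away from zero on any set where $|\mathbf{w}|\le W_0$, so the weight remains uniformly bounded and the argument closes.
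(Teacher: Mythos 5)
The paper does not actually prove this proposition; it is a cited result from \cite{Ahn-Ha-Kim-JMP-2022}, so there is no in-paper argument to compare against. Your characteristics-plus-Picard construction is the standard method for kinetic Cucker--Smale-type equations and is almost certainly what the cited reference does: the position drift $\mathbf{w}/F(w^2)$ is globally bounded by $c$, the velocity forcing $L[g]$ is bounded and locally Lipschitz, so the characteristic flow is well-defined; the pushforward weight is controlled by $\nabla_{\mathbf{w}}\cdot L[g]$; a priori support bounds yield linear growth, hence global extension; and Gronwall gives uniqueness. Two small points are worth tightening. First, your appeal to Lemma~\ref{coer-F} is slightly off-target: that lemma establishes a coercivity \emph{lower} bound on the eigenvalues of $\nabla_{\mathbf{z}}(\mathbf{z}/F(z^2))$, whereas for controlling the divergence weight $\exp\bigl(-\int_0^t\nabla_{\mathbf{w}}\cdot L[g]\bigr)$ what you actually need is the \emph{upper} bound on that Jacobian on bounded $\mathbf{w}$-sets (which does follow from the same explicit computation, but is not what the lemma is stated to prove). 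Second, to land in $C^1([0,T)\times\mathbb{R}^6)$ rather than merely $L^\infty$, the contraction must be run in a $C^1$-compatible norm or $C^1$ bounds on the iterates must be propagated separately; this matters because the $\mathbf{x}$-regularity of $L[g]$ is inherited from $g$ through convolution with the merely Lipschitz kernel $\phi$, and your remark that ``higher regularity is propagated by differentiating the characteristic system'' gestures at this but does not carry it out. Neither point undermines the argument; the approach is correct.
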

Next, we study asymptotic flocking behaviors of the kinetic RCS model \eqref{RKCS}.
First, we recall the following simple definition of strong flocking:
\[ \mbox{Strong flocking estimates}\quad \Longleftrightarrow \quad \sup_{0\leq t<\infty}\mathcal{D}_\mathbf{x}(t)<\infty, \quad \sup_{ t\rightarrow\infty}\mathcal{D}_\mathbf{w}(t)=0. \]
For a given $(\mathbf{x}, \mathbf{v}) \in \bbr^6$, we introduce particle trajectory $[\mathbf{x}(t) := \mathbf{x}(t;0,\mathbf{x},\mathbf{v}), \mathbf{w}(t) := \mathbf{w}(t;0,\mathbf{x},\mathbf{v})]$:
\begin{equation}
\begin{cases} \label{chara-RKCS}
\displaystyle \frac{\mathrm{d}\mathbf{x}(t)}{\mathrm{d}t}=\frac{\mathbf{w}(t)}{F(t)},\qquad t>0, \\
\displaystyle \frac{\mathrm{d}\mathbf{w}(t)}{\mathrm{d}t}=\int_{\mathbb{R}^6}\phi(|\mathbf{x}_*-\mathbf{x}|)\left(\frac{\mathbf{w}_*}{F_*}-\frac{\mathbf{w}}{F}\right)f(t, \mathbf{x}_*, \mathbf{w}_*)\, \mathrm{d} \mathbf{x}_*\mathrm{d} \mathbf{w}_*,\\
\displaystyle (\mathbf{x}(0),\mathbf{w}(0))=(\mathbf{x},\mathbf{w}).
\end{cases}
\end{equation}
Note that by the definition of $\mathcal{D}_\mathbf{w}(t)$, $\lim_{ t\rightarrow\infty}\mathcal{D}_\mathbf{w}(t)=0$ means one-point concentration in the $\mathbf{w}-$ variable. As in Subsection \ref{sec:3.1}, we study the strong flocking behaviors through the exponential decay of $\mathcal{D}_\mathbf{w}(t)$ and uniform boundedness of $\mathcal{D}_\mathbf{x}(t)$. As in  Lemma \ref{lemma-SDDI} for the RCS model, we can also derive  the corresponding  SDDI for $\mathcal{D}_\mathbf{x}$ and $\mathcal{D}_\mathbf{w}$.

\begin{lemma}\label{lemma-SDDI2}  
Let $f$ be a global smooth solution to \eqref{RKCS} with a compactly supported initial datum $f^{\mathrm{in}}$ such that
$$\int_{\mathbb{R}^6}f^{\mathrm{in}}\, \mathrm{d} \mathbf{x}\mathrm{d} \mathbf{w}=1, \qquad \int_{\mathbb{R}^6}\mathbf{w}f^{\mathrm{in}}\, \mathrm{d} \mathbf{x}\mathrm{d} \mathbf{w}=0.$$
Then the diameter pair $(\mathcal{D}_\mathbf{x}, \mathcal{D}_\mathbf{w})$ satisfies the following differential inequalities: 
\begin{equation} 
\begin{cases} \label{SDDI-rk}
\displaystyle \frac{\mathrm{d}\mathcal{D}_\mathbf{x}}{\mathrm{d} t}\leq\frac{c^2}{c^2+1}\mathcal{D}_\mathbf{w}, \quad t > 0, \vspace{0.2cm}\\
\displaystyle \frac{\mathrm{d}\mathcal{D}_\mathbf{w}}{\mathrm{d} t}\leq -\left(\phi(\mathcal{D}_{\mathbf{x}})-\frac{2\Lambda_2(\mathcal{D}_\mathbf{w})}{c^{2}}
\right)\mathcal{D}_\mathbf{w}.
\end{cases}
\end{equation}
\end{lemma}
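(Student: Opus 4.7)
The plan is to mirror the particle-level argument of Lemma \ref{lemma-SDDI}, working along characteristics of \eqref{chara-RKCS}. Since $f^{\mathrm{in}}$ is compactly supported, the support $\Omega_\mathbf{x}(f(t))$ and $\Omega_\mathbf{w}(f(t))$ are transported by the characteristic flow, so the diameters $\mathcal{D}_\mathbf{x}(t)$ and $\mathcal{D}_\mathbf{w}(t)$ are attained by suitable pairs of characteristics $(\mathbf{x}(t),\mathbf{w}(t))$ and $(\mathbf{x}_*(t),\mathbf{w}_*(t))$ originating in the initial support; by the Rademacher/Danskin-type argument already used in the proof of Lemma \ref{lemma-SDDI}, at almost every $t$ it suffices to differentiate along one fixed extremal pair.

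For the first inequality, pick extremal $\mathbf{x}$-characteristics, so that $\mathcal{D}_\mathbf{x}(t)=|\mathbf{x}(t)-\mathbf{x}_*(t)|$. From $\eqref{chara-RKCS}_1$,
\[
\frac{\mathrm{d}\mathcal{D}_\mathbf{x}}{\mathrm{d}t}\le \left|\frac{\mathbf{w}}{F}-\frac{\mathbf{w}_*}{F_*}\right|=|\mathbf{v}-\mathbf{v}_*|\le\frac{c^2}{c^2+1}|\mathbf{w}-\mathbf{w}_*|\le\frac{c^2}{c^2+1}\mathcal{D}_\mathbf{w},
\]
invoking the same $\mathbf{v}$-to-$\mathbf{w}$ conversion from \cite[Corollary 2.1]{Ahn-Ha-Kim-JMP-2022} used in the proof of Lemma \ref{lemma-SDDI}.

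For the second inequality, choose extremal $\mathbf{w}$-characteristics with $\mathcal{D}_\mathbf{w}(t)=|\mathbf{w}(t)-\mathbf{w}_*(t)|$, and compute $\tfrac12 \tfrac{\mathrm{d}}{\mathrm{d}t}|\mathbf{w}-\mathbf{w}_*|^2$ using $\eqref{chara-RKCS}_2$. Decomposing $\mathbf{u}/F(u)-\mathbf{w}/F(w)=(\mathbf{u}-\mathbf{w})+[(1/F(u)-1)\mathbf{u}-(1/F(w)-1)\mathbf{w}]$ (and analogously at $\mathbf{w}_*$), the time derivative splits into a principal part and a relativistic correction, exactly as in the splitting of $\mathcal{I}_{11}+\mathcal{I}_{12}$ in Lemma \ref{L4.4}. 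For the principal part, the extremality of the pair gives $(\mathbf{w}-\mathbf{w}_*)\cdot(\mathbf{u}-\mathbf{w})\le0$ and $(\mathbf{w}-\mathbf{w}_*)\cdot(\mathbf{u}-\mathbf{w}_*)\ge 0$ for every $\mathbf{u}\in\Omega_\mathbf{w}(f(t))$, so monotonicity of $\phi$ lets us replace both $\phi(|\mathbf{y}-\mathbf{x}|)$ and $\phi(|\mathbf{y}-\mathbf{x}_*|)$ by the lower bound $\phi(\mathcal{D}_\mathbf{x})$, and the hypothesis $\int f\,\mathrm{d}\mathbf{x}\mathrm{d}\mathbf{w}=1$, $\int \mathbf{w}f\,\mathrm{d}\mathbf{x}\mathrm{d}\mathbf{w}=0$ (propagated by Lemma \ref{L5.1}(i)) collapses the double integrals to $-\phi(\mathcal{D}_\mathbf{x})|\mathbf{w}-\mathbf{w}_*|^2=-\phi(\mathcal{D}_\mathbf{x})\mathcal{D}_\mathbf{w}^2$. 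For the correction part, the same gradient estimate \eqref{I21} used in the particle case, valid on the support since $|\mathbf{w}|,|\mathbf{w}_*|\le\mathcal{D}_\mathbf{w}$ by momentum conservation, yields a bound $\tfrac{2\Lambda_2(\mathcal{D}_\mathbf{w})}{c^2}\mathcal{D}_\mathbf{w}^2$. Combining the two pieces gives $\eqref{SDDI-rk}_2$.

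The main obstacle is the rigorous extremality argument: in the continuum setting one must justify, at least for a.e.\ $t$, that $\mathcal{D}_\mathbf{x}(t)$ and $\mathcal{D}_\mathbf{w}(t)$ are differentiable and realized on a single pair of characteristics carried from $\mathrm{supp}\,f^{\mathrm{in}}$. This is standard once one knows that the characteristic flow is a $C^1$ diffeomorphism mapping the compact initial support to the time-$t$ support, which follows from the well-posedness proposition quoted from \cite{Ahn-Ha-Kim-JMP-2022}; the rest of the proof is then a direct transcription of the particle computation of Lemma \ref{lemma-SDDI}, with sums replaced by integrals against $f(t,\mathbf{y},\mathbf{u})\,\mathrm{d}\mathbf{y}\mathrm{d}\mathbf{u}$.
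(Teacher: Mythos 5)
Your proposal is correct and follows essentially the same route as the paper: differentiate along extremal characteristics of \eqref{chara-RKCS}, invoke the $\mathbf{v}$-to-$\mathbf{w}$ conversion from \cite[Corollary 2.1]{Ahn-Ha-Kim-JMP-2022} for $\eqref{SDDI-rk}_1$, and for $\eqref{SDDI-rk}_2$ split the kernel into a Newtonian part (controlled by extremality and monotonicity of $\phi$, with the double integral collapsing to $-\phi(\mathcal{D}_\mathbf{x})\mathcal{D}_\mathbf{w}^2$ once $\int f = 1$ is used) and a relativistic correction bounded via \eqref{I21} with $|\mathbf{w}| \le \mathcal{D}_\mathbf{w}$ from momentum conservation. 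The only slight imprecision is attributing the collapse of the principal-part integral to both conservation laws — the $\mathbf{w}_*$ terms cancel algebraically so only mass normalization is needed there, whereas $\int \mathbf{w} f = 0$ enters only to justify $|\mathbf{w}| \le \mathcal{D}_\mathbf{w}$ on the support for the correction bound; this does not affect the validity of your argument.
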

\begin{proof}  
It follows from $(i)$ in Lemma \ref{L5.1} that 
$$\int_{\mathbb{R}^6}f\, \mathrm{d} \mathbf{x}\mathrm{d} \mathbf{w}=1, \qquad \int_{\mathbb{R}^6}\mathbf{w}f\, \mathrm{d} \mathbf{x}\mathrm{d} \mathbf{w}=0,\qquad t\geq0.$$
Now we derive the two inequalities in \eqref{SDDI-rk} seperately. \newline

\noindent $\bullet$~Case A (Derivation of differential inequality for $\mathcal{D}_\mathbf{x})$:~we take two particle trajectories $[\mathbf{x}_1(t), \mathbf{w}_1(t)]$ and $[\mathbf{x}_2(t), \mathbf{w}_2(t)]$ such that
  $$|\mathbf{w}_1(t)-\mathbf{w}_2(t)|=\mathcal{D}_\mathbf{w}(t).$$
  We use \eqref{chara-RKCS} and \cite[Corollary $2.1$]{Ahn-Ha-Kim-JMP-2022} to obtain
  \begin{align*}
  \begin{aligned}
  \frac{\mathrm{d}\mathcal{D}^2_\mathbf{x}}{\mathrm{d} t} &=2(\mathbf{x}_1-\mathbf{x}_2)\cdot\frac{\mathrm{d}(\mathbf{x}_1-\mathbf{x}_2)}{\mathrm{d} t}=2(\mathbf{x}_1-\mathbf{x}_2)\cdot\left(\frac{\mathbf{w}_1}{F_1}-\frac{\mathbf{w}_2}{F_2}\right) \leq 2|\mathbf{x}_1-\mathbf{x}_2|\left|\frac{\mathbf{w}_1}{F_1}-\frac{\mathbf{w}_2}{F_2}\right| \\
  &\leq\frac{2c^2}{c^2+1}\mathcal{D}_\mathbf{x}|\mathbf{w}_1-\mathbf{w}_2|.
\end{aligned}
  \end{align*}
  This implies the  first inequality in \eqref{SDDI-rk}.  \newline
  
\noindent $\bullet$~Case B (Derivation of differential inequality for $\mathcal{D}_\mathbf{w}$):~we take two particle trajectories $[\mathbf{x}_1(t), \mathbf{w}_1(t)]$ and $[\mathbf{x}_2(t), \mathbf{w}_2(t)]$ such that
 \begin{align}\label{wdiam-rkcs}
 |\mathbf{w}_1(t)-\mathbf{w}_2(t)|=\mathcal{D}_\mathbf{w}(t).
 \end{align}
  We use \eqref{chara-RKCS} to have
  \begin{align*} 
  \begin{aligned} 
    \frac{\mathrm{d}\mathcal{D}^2_\mathbf{w}}{\mathrm{d} t} &=  2(\mathbf{w}_1-\mathbf{w}_2)\cdot\frac{\mathrm{d}(\mathbf{w}_1-\mathbf{w}_2)}{\mathrm{d} t} \\
    & =2\int_{\mathbb{R}^6}\phi(|\mathbf{x}_*-\mathbf{x}_1|)\left(\frac{\mathbf{w}_*}{F_*}-\frac{\mathbf{w}_1}{F_1}\right)\cdot (\mathbf{w}_1-\mathbf{w}_2)f_*\, \mathrm{d} \mathbf{x}_*\mathrm{d} \mathbf{w}_*\\
    &-2\int_{\mathbb{R}^6}\phi(|\mathbf{x}_*-\mathbf{x}_2|)\left(\frac{\mathbf{w}_*}{F_*}-\frac{\mathbf{w}_2}{F_2}\right)\cdot (\mathbf{w}_1-\mathbf{w}_2)f_*\, \mathrm{d} \mathbf{x}_*\mathrm{d} \mathbf{w}_* \\
    &=2\int_{\mathbb{R}^6}\phi(|\mathbf{x}_*-\mathbf{x}_1|)(\mathbf{w}_*-\mathbf{w}_1)\cdot (\mathbf{w}_1-\mathbf{w}_2)f_*\, \mathrm{d} \mathbf{x}_*\mathrm{d} \mathbf{w}_*\\
    &-2\int_{\mathbb{R}^6}\phi(|\mathbf{x}_*-\mathbf{x}_2|)(\mathbf{w}_*-\mathbf{w}_2)\cdot (\mathbf{w}_1-\mathbf{w}_2)f_*\, \mathrm{d} \mathbf{x}_*\mathrm{d} \mathbf{w}_* \\
    &+2\int_{\mathbb{R}^6}\phi(|\mathbf{x}_*-\mathbf{x}_1|)\left[\left(\frac{1}{F_*}-1\right)\mathbf{w}_*-\left(\frac{1}{F_1}-1\right)\mathbf{w}_1\right]
    \cdot (\mathbf{w}_1-\mathbf{w}_2)f_*\, \mathrm{d} \mathbf{x}_*\mathrm{d} \mathbf{w}_* \\
    &-2\int_{\mathbb{R}^6}\phi(|\mathbf{x}_*-\mathbf{x}_2|)\left[\left(\frac{1}{F_*}-1\right)\mathbf{w}_*-\left(\frac{1}{F_1}-1\right)\mathbf{w}_2\right]
    \cdot (\mathbf{w}_1-\mathbf{w}_2)f_*\, \mathrm{d} \mathbf{x}_*\mathrm{d} \mathbf{w}_*.
   \end{aligned} 
  \end{align*}
  By the choice of the characteristic curves with \eqref{wdiam-rkcs},  we have
  \begin{align*}
        [\mathbf{w}_*-\mathbf{w}_1(t)]\cdot[\mathbf{w}_1(t)-\mathbf{w}_2(t)]\leq 0,\qquad 
         &[\mathbf{w}_*-\mathbf{w}_2(t)]\cdot[\mathbf{w}_1(t)-\mathbf{w}_2(t)]\geq 0
         \end{align*}
   for any $\mathbf{w}_*\in \Omega_\mathbf{w}(f(t))$, and 
        $$\phi(|\mathbf{x}_*-\mathbf{x}_1(t)|)\geq \phi(\mathcal{D}_{\mathbf{x}}(t)),\qquad \phi(|\mathbf{x}_*-\mathbf{x}_2(t)|)\geq \phi(\mathcal{D}_{\mathbf{x}}(t)).$$
  We can use \eqref{I21} to see
  \begin{align*}
  \begin{aligned} 
    &\frac{\mathrm{d}\mathcal{D}^2_\mathbf{w}}{\mathrm{d} t}-2\phi(\mathcal{D}_{\mathbf{x}})\mathcal{D}^2_\mathbf{w} \\
    &\hspace{0.5cm} \leq \frac{2\Lambda_2(\mathcal{D}_\mathbf{w})}{c^{2}}\int_{\mathbb{R}^6}\phi(|\mathbf{x}_*-\mathbf{x}_1|)|\mathbf{w}_*-\mathbf{w}_1|
    |\mathbf{w}_1-\mathbf{w}_2|f_*\, \mathrm{d} \mathbf{x}_*\mathrm{d} \mathbf{w}_* \\
    &\hspace{0.5cm}  +\frac{2\Lambda_2(\mathcal{D}_\mathbf{w})}{c^{2}}\int_{\mathbb{R}^6}\phi(|\mathbf{x}_*-\mathbf{x}_2|)|\mathbf{w}_*-
\mathbf{w}_2|
    |\mathbf{w}_1-\mathbf{w}_2|f_*\, \mathrm{d} \mathbf{x}_*\mathrm{d} \mathbf{w}_* \\
    &\hspace{0.5cm}  \leq \frac{2\Lambda_2(\mathcal{D}_\mathbf{w})}{c^{2}}\left(|\mathbf{w}_*-\mathbf{w}_1|+|\mathbf{w}_*-\mathbf{w}_2|\right)|\mathbf{w}_1-\mathbf{w}_2| \\
    &\hspace{0.5cm}  \leq \frac{4\Lambda_2(\mathcal{D}_\mathbf{w})}{c^{2}}\mathcal{D}^2_\mathbf{w}. 
\end{aligned}
\end{align*}
This implies \eqref{SDDI-rk}.
  \end{proof}

With the above preparations, we can provide the strong flocking estimate of \eqref{RKCS}.
\begin{theorem}\label{T4.1}
 Under the same conditions in Lemma \ref{lemma-SDDI2}, we further assume that 
 \begin{align*} 
 \phi(\mathcal{D}_\mathbf{x}^{\infty})>\frac{2\Lambda_2(D_\mathbf{w}(0))}{c^2},\qquad \mathcal{D}_\mathbf{x}(0)+\frac{c^4\mathcal{D}_\mathbf{w}(0)}{(c^2+1)[c^2\phi(\mathcal{D}_\mathbf{x}^{\infty})-2\Lambda_2(D_\mathbf{w}(0))]}
      <\mathcal{D}_\mathbf{x}^{\infty}.
 \end{align*}
Then, we have
 $$\sup_{0\leq t<\infty}\mathcal{D}_\mathbf{x}(t)<\mathcal{D}_\mathbf{x}^{\infty},\qquad \mathcal{D}_\mathbf{w}(t)\leq \mathcal{D}_\mathbf{w}(0) \exp\left\{-\left(\phi(\mathcal{D}_\mathbf{x}^{\infty})-\frac{2\Lambda_2(D_\mathbf{w}(0))}{c^2}\right) t\right\}, \quad t \in [0, \infty).$$
   \end{theorem}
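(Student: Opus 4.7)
The strategy is essentially the continuous (kinetic) analogue of the proof of Theorem \ref{T3.1}: both diameters satisfy a system of dissipative differential inequalities (SDDI) of identical structure, namely \eqref{SDDI-rk}, and the desired conclusion is extracted by a bootstrap/continuity argument on $\mathcal{D}_\mathbf{x}$ together with the Gr\"onwall-type inequality that results from $\eqref{SDDI-rk}_2$ once a uniform bound on $\mathcal{D}_\mathbf{x}$ is known. So the plan is to import the particle-level argument verbatim, with only the constants changed: here the communication weight is not divided by $T^*$ and no factor of $1/2$ appears in front of $\Lambda_2$, which is precisely what the assumptions track.

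First, I would extract a provisional monotonicity statement for $\mathcal{D}_\mathbf{w}$. Since $\phi$ is monotonically decreasing and $\Lambda_2$ is monotonically nondecreasing in its argument, the hypothesis $\phi(\mathcal{D}_\mathbf{x}^{\infty})>2\Lambda_2(\mathcal{D}_\mathbf{w}(0))/c^2$ ensures that, \emph{as long as} $\mathcal{D}_\mathbf{x}(t)<\mathcal{D}_\mathbf{x}^{\infty}$, the right-hand side of $\eqref{SDDI-rk}_2$ is nonpositive; hence on any such interval one has $\mathcal{D}_\mathbf{w}(t)\leq \mathcal{D}_\mathbf{w}(0)$, and consequently
\[
\frac{d\mathcal{D}_\mathbf{w}}{dt} \leq -\Bigl(\phi(\mathcal{D}_\mathbf{x}^{\infty})-\frac{2\Lambda_2(\mathcal{D}_\mathbf{w}(0))}{c^2}\Bigr)\mathcal{D}_\mathbf{w},
\]
which yields the exponential decay bound announced in the theorem on that interval.

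Next, I would run the bootstrap on $\mathcal{D}_\mathbf{x}$. Define
\[
t_0 := \sup\{\, t>0 : \mathcal{D}_\mathbf{x}(s)<\mathcal{D}_\mathbf{x}^{\infty}\ \text{for all } s\in[0,t)\,\},
\]
which is positive by continuity and the assumption $\mathcal{D}_\mathbf{x}(0)<\mathcal{D}_\mathbf{x}^{\infty}$. Suppose toward a contradiction that $t_0<\infty$, so that $\mathcal{D}_\mathbf{x}(t_0)=\mathcal{D}_\mathbf{x}^{\infty}$. On $[0,t_0]$ the previous step applies, so I can integrate $\eqref{SDDI-rk}_1$ using the explicit exponential bound:
\[
\mathcal{D}_\mathbf{x}(t_0) \;\leq\; \mathcal{D}_\mathbf{x}(0) + \frac{c^2}{c^2+1}\int_0^{t_0}\mathcal{D}_\mathbf{w}(0)\exp\Bigl\{-\Bigl(\phi(\mathcal{D}_\mathbf{x}^{\infty})-\tfrac{2\Lambda_2(\mathcal{D}_\mathbf{w}(0))}{c^2}\Bigr)s\Bigr\}\,ds,
\]
and bound the integral by its value on $[0,\infty)$. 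This gives
\[
\mathcal{D}_\mathbf{x}(t_0)\;\leq\; \mathcal{D}_\mathbf{x}(0) + \frac{c^4\mathcal{D}_\mathbf{w}(0)}{(c^2+1)\bigl[c^2\phi(\mathcal{D}_\mathbf{x}^{\infty})-2\Lambda_2(\mathcal{D}_\mathbf{w}(0))\bigr]} \;<\; \mathcal{D}_\mathbf{x}^{\infty},
\]
where the last strict inequality is exactly the standing hypothesis on the initial data. This contradicts $\mathcal{D}_\mathbf{x}(t_0)=\mathcal{D}_\mathbf{x}^{\infty}$, so $t_0=\infty$ and $\mathcal{D}_\mathbf{x}(t)<\mathcal{D}_\mathbf{x}^{\infty}$ globally.

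Finally, with the global bound $\mathcal{D}_\mathbf{x}(t)<\mathcal{D}_\mathbf{x}^{\infty}$ in hand, the exponential decay of $\mathcal{D}_\mathbf{w}$ stated in the theorem follows directly from $\eqref{SDDI-rk}_2$ (using again that $\Lambda_2$ is nondecreasing and $\mathcal{D}_\mathbf{w}(t)\leq\mathcal{D}_\mathbf{w}(0)$), by a scalar Gr\"onwall step. I do not anticipate any serious obstacle: the SDDI in Lemma \ref{lemma-SDDI2} is structurally the same as in Lemma \ref{lemma-SDDI}, and the only mild subtlety is to justify that the comparison $\Lambda_2(\mathcal{D}_\mathbf{w}(t))\leq \Lambda_2(\mathcal{D}_\mathbf{w}(0))$ is used in a circular-free way; this is handled by the standard ``open-closed'' continuity argument above, which keeps the monotonicity of $\mathcal{D}_\mathbf{w}$ and the bootstrap on $\mathcal{D}_\mathbf{x}$ mutually consistent.
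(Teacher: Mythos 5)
Your proof is correct and follows the same route the paper intends: the paper's own proof of Theorem \ref{T4.1} simply says it is ``almost the same as that in Theorem \ref{T3.1},'' and what you wrote is a faithful transcription of that argument to the SDDI \eqref{SDDI-rk}, with the $T^*$ and factor-of-two differences in the constants correctly tracked. If anything, you are a little more careful than the paper's sketch of Theorem \ref{T3.1}: the paper asserts $\mathcal{D}_\mathbf{w}(t)\le\mathcal{D}_\mathbf{w}(0)$ directly from $\eqref{SDDI-r}_2$, whereas this really needs the simultaneous bootstrap on $\mathcal{D}_\mathbf{x}$ (so that $\phi(\mathcal{D}_\mathbf{x}(t))>\phi(\mathcal{D}_\mathbf{x}^\infty)>2\Lambda_2(\mathcal{D}_\mathbf{w}(0))/c^2$), which you make explicit by restricting to the interval where $\mathcal{D}_\mathbf{x}<\mathcal{D}_\mathbf{x}^\infty$ and noting the circular-free use of $\Lambda_2(\mathcal{D}_\mathbf{w}(t))\le\Lambda_2(\mathcal{D}_\mathbf{w}(0))$.
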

  \begin{proof} We omit details since it is almost the same as that in Theorem \ref{T3.1}. 
  \end{proof}
  \begin{remark} \label{R4.1}
   In \cite{Ahn-Ha-Kim-JMP-2022}, the authors imposed similar assumptions as those in Theorem \ref{T4.1} with more restrictive coefficients in the relationship between $\mathcal{D}_\mathbf{x}$ and $\mathcal{D}_\mathbf{w}$.
\end{remark}

\vspace{0.5cm}
\subsection{Uniform-time classical limit} \label{sec:4.2}
In this subsection, we focus on the uniform-time classical limit from \eqref{RKCS} to \eqref{KCS} with an optimal convergence rate. Different from the particle model in the previous section, we will proceed the   classical limit of the kinetic equation by employing the uniform-time  mean-field estimates in the framework of measure-valued solutions. \newline

Let $\mathcal{P}(\mathbb{R}^6)$ be the set of all probability Radon measures on  $\mathbb{R}^6$, which can be identified with the set of normalized non-negative bounded linear functionals on $\mathcal{C}_0(\mathbb{R}^6)$. For any $\mu\in\mathcal{P}(\mathbb{R}^6)$ and $\varphi\in \mathcal{C}_0(\mathbb{R}^6)$, we have the following standard duality relation:
$$(\mu, \varphi):=\int_{\mathbb{R}^6}\varphi(\mathbf{x},\mathbf{w})\mu(\mathrm{d}\mathbf{x}, \mathrm{d}\mathbf{w}).$$

As in \cite{Ahn-Ha-Kim-CPAA-2021}, we recall the concept of measure-valued solutions to \eqref{RKCS} as follows.
\begin{definition}\cite{Ahn-Ha-Kim-CPAA-2021}\label{mvs-rkcs}
For any $T\in [0, \infty)$, a time-dependent probability Radon measure $\mu \in L^{\infty}([0,T);\mathcal{P}(\mathbb{R}^6))$ is a measure-valued solution to \eqref{RKCS} with the initial datum $\mu_0 \in \mathcal{P}(\mathbb{R}^6)$ if the following assertions hold:
  \begin{enumerate}
    \item For any $\varphi\in \mathcal{C}_0([0,T)\times\mathbb{R}^6)$, $\mu$~ is weakly continuous in $t$:
    $$t\mapsto (\mu_t, \varphi) \mbox{is continuous in t}.$$ 
    \item For any $\psi\in \mathcal{C}^1_0([0,T)\times\mathbb{R}^6)$, $\mu$ satisfies \eqref{RKCS} in a weak sense:
        $$(\mu_t, \psi(t, \cdot,\cdot))-(\mu_0, \psi(0, \cdot,\cdot))=\int_0^t\left(\mu_s, \partial_s\psi+\frac{\mathbf{w}}{F}\cdot \nabla_\mathbf{x}\psi+L[\mu_s]\cdot\nabla_\mathbf{w}\psi\right)\mathrm{d}s,$$
    where $L[\mu_s]$ takes the explicit form:
    $$L[\mu_s](\mathbf{x},\mathbf{w}):=-\int_{\mathbb{R}^6}\phi(|\mathbf{x}_*-\mathbf{x}|)\left(\frac{\mathbf{w}_*}{F_*}-\frac{\mathbf{w}}{F}\right)f(t_*, \mathbf{x}_*, \mathbf{w}_*)\mu_t( \mathrm{d} \mathbf{x}_*,\mathrm{d} \mathbf{w}_*).$$    
        \end{enumerate}
\end{definition}
Next, we recall the Wasserstein distance between measures in $\mathcal{P}(\mathbb{R}^6)$.
\begin{definition}\cite{Villani-2009} For $p\in [1, \infty)$, let $\mathcal{P}_{p}(\mathbb{R}^6)$ be a collection of all probability Radon measures with the finite $p-$th  moment:
$$\mathcal{P}_{p}(\mathbb{R}^6):=\left\{\mu \in \mathcal{P}(\mathbb{R}^6): \int_{\mathbb{R}^6}|z-z_0|^p\mu(\mathrm{d}z)<\infty \quad \mbox{for some  } z_0\in \mathbb{R}^6\right\}.$$
Then, the $p-$Wasserstein distance $W_p(\mu, \nu)$ is defined for any $\mu, \nu \in \mathcal{P}_{p}(\mathbb{R}^6)$ as
$$W^p_p(\mu, \nu):=\inf_{\gamma\in\Gamma(\mu, \nu)}\left(\int_{\mathbb{R}^6\times\mathbb{R}^6}|z-z_*|^p\,\mathrm{d}\gamma(z,z_*)\right),$$
where $\Gamma(\mu, \nu)$ denotes the collection of all probability Radon measure on $\mathbb{R}^6\times\mathbb{R}^6$ with marginals $\mu$ and $\nu$.
\end{definition}

\vspace{0.5cm}

It is well known from \cite{Villani-2009} that for any initial measure $\mu_0$ with a compact support, there exists a sequence of empirical measures $\left(\mu_0^N\right)_{N=1}^{\infty}$ on $\mathbb{R}^6$ defined by
$$\mu_0^N(\mathrm{d}\mathbf{x}, \mathrm{d}\mathbf{w}):= \frac{1}{N}\sum_{a=1}^N\delta_{(\mathbf{x}^{0}_a, \mathbf{w}^{0}_a)}$$
such that 
$$\limsup_{N\rightarrow\infty}W_1(\mu_0^N, \mu_0)=0.$$
Let $\mu_t^{N,\infty}(\mathrm{d}\mathbf{x}^\infty, \mathrm{d}\mathbf{w}^\infty)$ be the empirical measure defined by
\begin{align}\label{em-muinf}
\mu_t^{N,\infty}(\mathrm{d}\mathbf{x}^\infty, \mathrm{d}\mathbf{w}^\infty):=\frac{1}{N}\sum_{a=1}^N\delta_{(\mathbf{x}^\infty_a(t), \mathbf{w}^\infty_a(t))},
\end{align}
where $\left\{(\mathbf{x}^\infty_a(t), \mathbf{w}^\infty_a(t))\right\}_{a=1}^N$ is a solution to \eqref{CSmodel-vc} with the initial data $\{(\mathbf{x}^{\mathrm{in},\infty}_a, \mathbf{w}^{\mathrm{in},\infty}_a)\}_{a=1}^N$. Correspondingly, we can also define the empirical measure $\mu_t^{N}(\mathrm{d}\mathbf{x}, \mathrm{d}\mathbf{w})$ as
\begin{align}\label{em-mu}
\mu_t^{N}(\mathrm{d}\mathbf{x}, \mathrm{d}\mathbf{w}):=\frac{1}{N}\sum_{a=1}^N\delta_{(\mathbf{x}_a(t), \mathbf{w}_a(t))},
\end{align}
where $\left\{(\mathbf{x}_a(t), \mathbf{w}_a(t))\right\}_{a=1}^N$ is a solution to \eqref{CSmodel-wc} with the same initial data $\{(\mathbf{x}^{\mathrm{in},\infty}_a, \mathbf{w}^{\mathrm{in},\infty}_a)\}_{a=1}^N$. Then, the empirical measures $\mu^N_t$ and $\mu^{N,\infty}_t$ are measure-valued solutions to  \eqref{RKCS} and  \eqref{KCS}, respectively.
The following uniform-time mean-field limit results imply that the measure-valued solutions  $\mu_t$ and $\mu^{\infty}_t$ of  \eqref{RKCS} and  \eqref{KCS} can be uniformly approximated by the empirical measures $\mu_t^{N}$ and $\mu_t^{N,\infty}$, respectively, under suitable assumptions on the initial measure $\mu_0$.

\begin{proposition} 
\emph{\cite{Ahn-Ha-Kim-CPAA-2021,Ha-Kim-Zhang-KRM-2018}} \label{KCS-lim} The following assertions hold:
\begin{enumerate}
  \item Suppose that the initial measure $\mu_0$ is compactly supported and satisfies the following condition: there exist a positive constant $\mathcal{D}_\mathbf{x}^{\infty}$ such that
      \begin{align*} 
      \mathcal{D}_\mathbf{x}(0)+\frac{c^4\mathcal{D}_\mathbf{w}(0)}{(c^2+1)[c^2\phi(\mathcal{D}_\mathbf{x}^{\infty})-2\Lambda_2(D_\mathbf{\mathbf{w}}(0))]}
      <\mathcal{D}_\mathbf{x}^{\infty},
 \end{align*}
 where $\mathcal{D}_\mathbf{x}(0)$, and $\mathcal{D}_\mathbf{w}(0)$ denote the diameters of the $\mathbf{x}-$support and $\mathbf{w}-$support of the initial measure $\mu_0$, respectively. Let $\mu_t$ be a global measure-valued solution to \eqref{RKCS} with initial datum $\mu_0$ and $\mu_t^{N}$ be a measure defined in \eqref{em-mu}.
Then, one has
$$\limsup_{N\rightarrow\infty}W_1(\mu_t^N, \mu_t)=0.$$  
  \item Suppose that the initial measure $\mu_0$ is compactly supported and satisfies the following condition:
  $$\mathcal{D}_\mathbf{x}(0)+\mathcal{D}_\mathbf{w}(0)<\infty, \qquad \mathcal{D}_\mathbf{w}(0)<\int_{\mathcal{D}_\mathbf{x}(0)}^{\infty}\phi(s)\,\mathrm{d}s,  $$
  and let $\mu_t^{\infty}$ be a measure-valued solution to \eqref{KCS} with initial datum $\mu_0$, and $\mu_t^{N,\infty}$ be the measure defined in \eqref{em-muinf}. Then, one has
$$\limsup_{N\rightarrow\infty}W_1(\mu_t^{N,\infty}, \mu^{\infty}_t)=0.$$ 
\end{enumerate} 
  
\end{proposition}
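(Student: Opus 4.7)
The plan is to prove both assertions by a Dobrushin-type coupling argument in the 1-Wasserstein metric, transporting an optimal initial coupling along the characteristic flows and controlling the growth of the coupled distance via a Gronwall-type inequality. The uniform-in-time flocking estimate from Theorem \ref{T4.1} (for assertion (1)) and from Lemma \ref{CS-fe} (for assertion (2)) supplies the bounded supports that make the drift fields Lipschitz on the region actually visited by the dynamics.

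For assertion (1), I would first fix an optimal coupling $\pi_0\in\Gamma(\mu_0^N,\mu_0)$ realizing $W_1(\mu_0^N,\mu_0)$. Using the characteristic ODE \eqref{chara-RKCS}, I would construct the $N$-particle flow $\Phi_t^N$ associated with the empirical measure and the mean-field flow $\Phi_t$ associated with $\mu_t$, then push $\pi_0$ forward by the product map $(\Phi_t^N,\Phi_t)$ to obtain a coupling $\pi_t\in\Gamma(\mu_t^N,\mu_t)$. By Theorem \ref{T4.1} together with Lemma \ref{lemma-SDDI2}, the $\mathbf{x}$- and $\mathbf{w}$-supports of $\mu_t^N$ and $\mu_t$ are contained in a common bounded set uniformly in $t$; on this set the map $\mathbf{w}\mapsto \mathbf{w}/F(w^2)$ is Lipschitz (the Jacobian is controlled via the eigenvalue computation of Lemma \ref{coer-F}), the weight $\phi$ is globally Lipschitz, and the nonlocal force $L[\cdot]$ is Lipschitz in its measure argument with respect to $W_1$. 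Differentiating
\[
\mathcal{W}(t):=\int_{\mathbb{R}^{12}}\bigl(|\mathbf{x}-\mathbf{x}_{*}|+|\mathbf{w}-\mathbf{w}_{*}|\bigr)\,d\pi_t(\mathbf{x},\mathbf{w},\mathbf{x}_{*},\mathbf{w}_{*})
\]
in $t$ and applying these Lipschitz bounds yields a closed differential inequality of the form $\dfrac{d}{dt}\mathcal{W}(t)\le C\,\mathcal{W}(t)$ with $C$ depending only on the uniform support diameters; Gronwall and $\mathcal{W}(t)\ge W_1(\mu_t^N,\mu_t)$ then give $W_1(\mu_t^N,\mu_t)\le e^{Ct}W_1(\mu_0^N,\mu_0)$. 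Sending $N\to\infty$ and using $W_1(\mu_0^N,\mu_0)\to 0$ finishes (1).

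For assertion (2), the identical coupling argument applies with $\mathbf{w}/F$ replaced by $\mathbf{w}$: the classical flocking Lemma \ref{CS-fe} supplies the uniform support bounds, and the absence of the relativistic factor actually simplifies the Lipschitz estimates, so the derivation proceeds by standard mean-field arguments as in \cite{Ha-Kim-Zhang-KRM-2018}.

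The principal technical point, as I see it, is verifying a Lipschitz bound for the relativistic characteristic velocity $\mathbf{w}/F(w^2)$ with a constant that is uniform in $c$ on the prescribed support. The Jacobian is diagonalized in Lemma \ref{coer-F}, where the eigenvalue $1/F(z^2)-2z^2F'(z^2)/F^2(z^2)$ is both bounded above and bounded away from zero on the support dictated by Theorem \ref{T4.1}; combined with the flocking-based control of the support and the Lipschitz continuity of $L[\cdot]$ in $W_1$, this is precisely what keeps the Gronwall constant $C$ finite and delivers the stated mean-field limit for each fixed $t$.
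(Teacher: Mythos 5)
The paper does not prove this proposition; it is recalled directly from \cite{Ahn-Ha-Kim-CPAA-2021} and \cite{Ha-Kim-Zhang-KRM-2018}, so there is no in-paper argument to compare against. Your Dobrushin-type coupling sketch is the correct machinery and the ingredients you isolate (uniform support bounds from the flocking theorems, the Jacobian control of $\mathbf{w}\mapsto\mathbf{w}/F$ from Lemma \ref{coer-F}, Lipschitz continuity of $\phi$ and of $L[\cdot]$ in $W_1$) are exactly what the cited proofs rest on.

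However, there is a genuine gap between what your Gronwall argument delivers and what the proposition (as the paper subsequently uses it, e.g.\ the phrase ``uniform-time mean-field limits'' in the proof of Theorem \ref{T4.2}) actually asserts. Your inequality $\frac{d}{dt}\mathcal{W}\le C\,\mathcal{W}$ with a time-independent $C$ yields $W_1(\mu_t^N,\mu_t)\le e^{Ct}\,W_1(\mu_0^N,\mu_0)$, which gives the stated limit only for each fixed $t$ and degenerates as $t\to\infty$. The cited reference \cite{Ha-Kim-Zhang-KRM-2018} (``Uniform stability'') proves the strictly stronger statement $\sup_{t\geq 0}W_1(\mu_t^N,\mu_t)\le G\,W_1(\mu_0^N,\mu_0)$ with $G$ independent of $t$. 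To recover this, the crude Gronwall rate $C$ must be replaced by a $t$-integrable one: in the differential inequality for the coupled distance, the alignment term contributes a nonpositive (contracting) piece of order $-\phi(\mathcal{D}_\mathbf{x}^\infty)$ in the $\mathbf{w}$-component, while the growth terms coming from $\phi(|\mathbf{x}-\mathbf{x}_*|)-\phi(|\mathbf{x}'-\mathbf{x}'_*|)$ and from the relativistic correction $\mathbf{w}/F-\mathbf{w}$ are multiplied by $\mathcal{D}_\mathbf{w}(t)$ (respectively $\mathcal{D}_\mathbf{w}(t)/c^2$), which Theorem \ref{T4.1} and Lemma \ref{CS-fe} force to decay exponentially. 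This is precisely the mechanism the paper exploits at the particle level in Lemma \ref{L4.4} and Step C of Theorem \ref{T3.2}. Without that refinement, your proposal proves only the fixed-$t$ version; with it, you would reproduce the uniform-in-time stability that the cited works actually establish and that the label of the proposition suggests.

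One further small clarification is worth making explicit in your write-up: both flow maps you push $\pi_0$ forward with are characteristic flows of the same mean-field vector field, but driven by the two different measures $\mu_t^N$ and $\mu_t$; the contribution to $\frac{d}{dt}\mathcal{W}$ therefore decomposes into a Lipschitz-in-phase-space part and a Lipschitz-in-measure part, and it is only the second that closes in $W_1(\mu_t^N,\mu_t)$. Stating that decomposition is the step that turns the sketch into a proof.
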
 
\begin{remark}
The initial conditions in \cite{Ahn-Ha-Kim-CPAA-2021} are more complicated and more restrictive. Here we replace it with a much simpler one. 
\end{remark}
Now we are ready to prove the uniform-time classical limit for the kinetic RCS model \eqref{RKCS} with an optimal convergence rate.
\begin{theorem} \label{T4.2}
Suppose that the initial probability Radon measure $\mu_0$ satisfies the conditions in Proposition \ref{KCS-lim}, and let $\mu_t$ and $\mu_t^{\infty}$ be measure-valued solutions to Cauchy problems \eqref{RKCS} and \eqref{KCS} with the compactly supported initial measures $\mu_0$ and $\mu_0^{\infty}$ satisfying
\begin{align}\label{indevi-kRCS}
    W_1(\mu_0, \mu^{\infty}_0) \leq \mathcal{O}(c^{-2}).
\end{align}
Then, we have 
\begin{align}\label{w1-cr}
\sup_{0\leq t<\infty}W_1(\mu_t, \mu^{\infty}_t) \leq \mathcal{O}(c^{-2}).
\end{align}
\end{theorem}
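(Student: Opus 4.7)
The plan is to reduce the kinetic estimate to a particle-level estimate via a triangle inequality of the form
\begin{equation*}
W_1(\mu_t, \mu_t^{\infty}) \leq W_1(\mu_t, \mu_t^N) + W_1(\mu_t^N, \mu_t^{N,\infty}) + W_1(\mu_t^{N,\infty}, \mu_t^{\infty}),
\end{equation*}
and then pass $N\to\infty$. The flanking terms are handled by the uniform-in-time mean-field limits of Proposition~\ref{KCS-lim}, which send them to zero as $N\to\infty$ uniformly in $t$. The middle term is controlled by a coupled particle-level analysis in the spirit of Theorem~\ref{T3.2}.

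To set up the coupling at the particle level, I would pick an optimal $W_1$-transport plan $\pi_0$ between $\mu_0$ and $\mu_0^{\infty}$ and sample $N$ matched pairs $\{((\mathbf{x}_a^0, \mathbf{w}_a^0), (\mathbf{x}_a^{\infty,0}, \mathbf{w}_a^{\infty,0}))\}_{a=1}^N$ from $\pi_0$. The resulting empirical measures $\mu_0^N$ and $\mu_0^{N,\infty}$ approximate $\mu_0$ and $\mu_0^{\infty}$ in $W_1$ by the law of large numbers, and the matched diagonal cost $\tfrac{1}{N}\sum_a|(\mathbf{x}_a^0, \mathbf{w}_a^0) - (\mathbf{x}_a^{\infty,0}, \mathbf{w}_a^{\infty,0})|$ converges almost surely to $W_1(\mu_0, \mu_0^{\infty}) \leq \mathcal{O}(c^{-2})$. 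Evolving these initial samples under the particle systems \eqref{CSmodel-wc} and \eqref{CSmodel-vc} produces $\mu_t^N$ and $\mu_t^{N,\infty}$, and using the matched coupling, $W_1(\mu_t^N, \mu_t^{N,\infty})$ is bounded above by the $\ell^1$-average of the trajectory distances.

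The main obstacle is rate matching: Theorem~\ref{T3.2} is stated for the $\ell^2$-type functional $\Delta^c$ and requires $\Delta^c(0) \leq \mathcal{O}(c^{-4})$, whereas sampling from an optimal $W_1$-coupling on a support of diameter $R$ only yields $\Delta^c(0) \lesssim R\cdot W_1(\mu_0, \mu_0^{\infty}) = \mathcal{O}(c^{-2})$, and the naive Cauchy--Schwarz estimate $W_1 \leq \sqrt{\Delta^c}$ would then yield only $\mathcal{O}(c^{-1})$. To bridge the gap, I would establish an $\ell^1$-analogue of Theorem~\ref{T3.2} that directly propagates
\begin{equation*}
\Delta^c_1(t) := \frac{1}{N}\sum_{a=1}^N\bigl(|\mathbf{x}_a(t)-\mathbf{x}_a^{\infty}(t)| + |\mathbf{w}_a(t)-\mathbf{w}_a^{\infty}(t)|\bigr)
\end{equation*}
with the uniform-in-time rate $\mathcal{O}(c^{-2})$ under the hypothesis $\Delta^c_1(0) \leq \mathcal{O}(c^{-2})$. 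The three-step strategy of Theorem~\ref{T3.2} should carry over: on $[0,t_1]$ for a suitable $t_1$ one uses a finite-time Gronwall; for $t\geq t_1$ one exploits the flocking estimates (Theorem~\ref{T4.1} and Lemma~\ref{CS-fe}) to show that the $\mathcal{O}(c^{-2})$ relativistic perturbations arising from $\mathbf{w}_a/F_a-\mathbf{w}_a$ and the corresponding mismatch in the alignment terms are integrable in time against the exponentially decaying $|\mathbf{w}_a|$; and a final Gronwall gives $\sup_t \Delta^c_1(t) \leq \mathcal{O}(c^{-2})$.

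Combining this $\ell^1$ particle-level estimate with the uniform-in-$t$ mean-field limits of Proposition~\ref{KCS-lim} for the flanking terms, and then sending $N \to \infty$ in the triangle inequality, yields the desired $\sup_{t\geq 0} W_1(\mu_t,\mu_t^{\infty}) \leq \mathcal{O}(c^{-2})$. The delicate point worth highlighting in the write-up is that the $\ell^1$ SDDI for $\Delta^c_1$ must be derived along the same lines as Lemma~\ref{L4.4} but with $\ell^2$ inequalities replaced by their $\ell^1$ analogues; the exponential flocking of both $\mathbf{w}_a$ and $\mathbf{w}_a^{\infty}$ is precisely what prevents the linear-in-time Gronwall growth on $[0,t_1]$ from spoiling the uniform-in-time bound.
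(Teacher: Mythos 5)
Your overall skeleton (triangle inequality, particle-level coupling, then $N\to\infty$) matches the paper's, but the middle term is handled differently, and the difference matters. The paper does \emph{not} sample from an optimal coupling between $\mu_0$ and $\mu_0^\infty$; it takes the \emph{identical} empirical initial configuration $\{(\mathbf{x}_a^{\mathrm{in},\infty},\mathbf{w}_a^{\mathrm{in},\infty})\}$ for both the RCS particle system \eqref{CSmodel-wc} and the CS particle system \eqref{CSmodel-vc}. With identical initial data $\Delta^c(0)=0$, so Theorem~\ref{T3.2} applies directly to give $\Delta^c(t)\le\mathcal{O}(c^{-4})$, and the chain $W_1(\mu_t^N,\mu_t^{N,\infty})\le W_2(\mu_t^N,\mu_t^{N,\infty})\le\sqrt{\Delta^c(t)}=\mathcal{O}(c^{-2})$ already gives the claimed rate. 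There is no square-root loss because the $\ell^2$ quantity is genuinely $\mathcal{O}(c^{-4})$, not $\mathcal{O}(c^{-2})$; the ``rate-matching obstacle'' you describe simply does not arise in the paper's construction. The price the paper pays is that all of the initial discrepancy $W_1(\mu_0,\mu_0^\infty)\le\mathcal{O}(c^{-2})$ must instead be absorbed by the flanking terms $W_1(\mu_t,\mu_t^N)$ and $W_1(\mu_t^{N,\infty},\mu_t^\infty)$ together with the mean-field limits of Proposition~\ref{KCS-lim} (implicitly, a $W_1$-stability argument for the kinetic CS flow is invoked here, since $\mu_0^{N,\infty}\to\mu_0$ rather than $\mu_0^\infty$).

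Your alternative --- couple the initial data optimally and then prove an $\ell^1$-analogue $\Delta_1^c$ of Theorem~\ref{T3.2} --- is a legitimate different route, and it is arguably cleaner in how it treats $\mu_0\ne\mu_0^\infty$: the initial discrepancy lives in the middle term, where you control it directly, and the flanking terms become pure mean-field-limit facts with matched initial measures. The drawback is substantial new work: the $\ell^1$ propagation lemma is not a cosmetic variant of Lemma~\ref{L4.4}. The crucial good term $\mathcal I_{11}$ there is dissipative precisely because the $\ell^2$ structure lets one symmetrize and drop $|\sum_a(\mathbf w_a-\mathbf w_a^\infty)|^2$ using the zero-sum condition; in $\ell^1$ there is no such cancellation and the sign of $\frac{\mathrm d}{\mathrm dt}|\mathbf w_a-\mathbf w_a^\infty|$ is not controlled term by term. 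You would have to find a substitute for that mechanism (e.g.\ work with the $\ell^1$ norm of the differences and exploit the sign structure of the alignment kernel more carefully), and you have not done so. As written, your proposal asserts the $\ell^1$ SDDI should ``carry over'' without demonstrating that the dissipation survives, which is exactly the nontrivial point. So: different route, plausible, but with a genuine gap at the step you flag as ``the delicate point.''
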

\begin{proof}
The $1-$Wasserstein distance between $\mu_t$ and $\mu_t^{\infty}$ can be estimated as
\begin{align}\label{inin}
W_1(\mu_t, \mu^{\infty}_t)\leq W_1(\mu_t, \mu^{N}_t)+W_1(\mu^N_t, \mu^{N,\infty}_t)+W_1(\mu^{N,\infty}_t, \mu^{\infty}_t).
\end{align}
Note that the transport plan from $\mu^N_t$ to $\mu^{N,\infty}_t$ can be encoded as the $N\times N$ matrix $\mathbb{M}=(M_{ab})$ with the following constraint:
$$\sum_{a=1}^N M_{ab}=\sum_{b=1}^N M_{ab}=\frac{1}{N}.$$
Considering the particular transport plan $M_{ab}=\frac{1}{N}\delta_{ab}$, we use \eqref{limrat-CS} in Theorem \ref{T3.2} to get
\begin{align*}
W_2^2(\mu^N_t, \mu^{N,\infty}_t)\leq \frac{1}{N}\sum_{a=1}^N\left(\left|\mathbf{x}_a(t)-\mathbf{x}^{\infty}_a(t)\right|^2
+\left|\mathbf{w}_a(t)-\mathbf{w}^{\infty}_a(t)\right|^2\right)= \Delta^c(t) .
\end{align*}
This together with the monotonicity of the $p-$Wasserstein distance with respect to $p$ implies
\begin{align}\label{nn}
W_1(\mu^N_t, \mu^{N,\infty}_t)\leq  W_2(\mu^N_t, \mu^{N,\infty}_t)\leq \sqrt{\Delta^c(t)}=\mathcal{O}(c^{-2}).
\end{align}
Then, by the uniform-time mean-field limits presented in Proposition \ref{KCS-lim}, we take $N\rightarrow\infty$ in \eqref{inin} and use \eqref{nn} to derive \eqref{w1-cr}.
\end{proof}
\begin{remark} \label{R4.2}
In \cite{Ahn-Ha-Kim-JMP-2022}, instead of \eqref{indevi-kRCS}, the authors assume that the initial measures $\mu_0$ and $\mu_0^{\infty}$ are the same, and define $\widetilde{\Delta}^c(t)$ as
$$\widetilde{\Delta}^c(t):=\sum_{a=1}^N\left(\left|\mathbf{x}_a(t)-\mathbf{x}^{\infty}_a(t)\right|^2
+\left|\mathbf{w}_a(t)-\mathbf{w}^{\infty}_a(t)\right|^2\right).$$
Correspondingly, instead of \eqref{nn}, they obtain
\begin{align*}
W_1(\mu^N_t, \mu^{N,\infty}_t)\leq W_2(\mu^N_t, \mu^{N,\infty}_t)\leq \sqrt{\frac{\widetilde{\Delta}^c(t)}{N}}=\mathcal{O}(N^{-\frac{1}{2}}).
\end{align*}
Then, it follows from \eqref{inin} and the uniform-time mean-field limits presented in Proposition \ref{KCS-lim} that
$$\lim_{c\rightarrow\infty}\sup_{0\leq t<\infty} W_1(\mu_t, \mu^{\infty}_t)=0. $$
Namely, they derive a uniform-time classical limit without convergence rate  from \eqref{RKCS} to \eqref{KCS} under a stronger assumption for initial data.
\end{remark}

\section{Conclusion} \label{sec:5}
\setcounter{equation}{0}
In this paper, we present improved flocking estimates and a uniform-time mean-field model for the TCS model in the simplified mechanical  case. In \cite{Ha-Kim-Ruggeri-ARMA-2020}, the authors developed a relativistic TCS model and analyzed its flocking dynamics as well as the uniform-time mean-field limit, transitioning from the relativistic CS model to the classical CS model. Our study provides sufficient conditions for these dynamics in terms of initial data and system parameters.
Under the proposed conditions, we demonstrate that flocking dynamics emerge asymptotically: the spatial diameter remains uniformly bounded over time, while the velocity diameter converges to zero asymptotically. Additionally, we derive an optimal convergence rate from the relativistic model to the classical model, which is of order $c^{-4}$, consistent with the finite-time classical limit.
However, several important aspects remain unexplored in this work. For instance, we focused on the simplified relativistic mechanical framework rather than addressing the full relativistic thermodynamical case. This intriguing extension is left for future investigation.



\begin{thebibliography}{99}



\bibitem{Ahn-Ha-Kim-JMP-2022} H. Ahn, S.-Y. Ha and J. Kim: \textit{Nonrelativistic limits of the relativistic Cucker–Smale model and its kinetic counterpart.} \textit{J. Math. Phys.} {\bf 63} (2022), 082701.

\bibitem{Ahn-Ha-Kim-CPAA-2021} H. Ahn, S.-Y. Ha and J. Kim: \textit{Uniform stability of the relativistic Cucker–Smale model and its application to a mean-field limit.} Commun. Pure Appl. Anal. {\bf20} (2021), 4209-4237.    
    
\bibitem{A-B-F} G. Albi, N. Bellomo, L. Fermo, S.-Y. Ha, J. Kim, L. Pareschi, D. Poyato and J. Soler: \textit{Vehicular traffic, crowds and swarms: From kinetic theory and multiscale methods to applications and research perspectives.} Math. Models Methods Appl. Sci. {\bf 29} (2019), 1901-2005.

\bibitem{Anile}  A. M. Anile: \textit{Relativistic Fluids and magneto-fluids.} Cambridge University Press, Cambridge, 1989.

\bibitem{Ao} I. Aoki.: \textit{A simulation study on the schooling mechanism in fish.} Bulletin of the Japan Society of Scientific Fisheries. \textbf{48} (1982), 1081-1088.

\bibitem{B-H} N. Bellomo and S.-Y. Ha: \textit{A quest toward a mathematical theory of the dynamics of swarms.}  Math. Models Methods Appl. Sci.{\bf  27} (2017), 745-770.
    
\bibitem{B-R}  
G. Boillat and T. Ruggeri, \textit{Hyperbolic principal subsystems: Entropy convexity and subcharacteristic conditions}. Arch. Rational Mech. Anal. \textbf{137} (1997), 305-320.
\bibitem{B-B} J. Buck and E. Buck: \textit{Biology of synchronous flashing of fireflies.} Nature {\bf 211} (1966), 562.

\bibitem{Cercignani-Kremer}  C. Cercignani and G. M. Kremer: \textit{The Relativistic Boltzmann Equation: Theory and Applications.} Basel-Boston: Birkh$\ddot{a}$user Verlag, 2002.

\bibitem{C-S} F. Cucker and S. Smale: \textit{Emergent behavior in flocks.} IEEE Trans. Automat. Control {\bf 52} (2007), 852-862.

\bibitem{Dafermos-2010} C. M. Dafermos: \textit{Hyperbolic Conservation Laws in Continuum Physics.} Grundlehren
der Mathematischen Wissenschaften, vol. 325, 3rd edn. Springer, Heidelberg, 2010.

\bibitem{D-M2} P. Degond and S. Motsch: \textit{Large-scale dynamics of the persistent Turing Walker model of fish behavior.} J. Stat. Phys. {\bf 131} (2008), 989-1022.

\bibitem{Groot-Leeuwen-Weert-1980} S. R. de Groot, W.A. van Leeuwen and Ch. G. van Weert: \textit{Relativistic kinetic theory.} Amsterdam: North-Holland Publishing Co., 1980.

\bibitem{D-B1} F. D\"{o}rfler and F. Bullo.: \textit{Synchronization in complex networks of phase oscillators: A survey.} Automatica \textbf{50} (2014), 1539-1564.



\bibitem{H-K-R} 
S.-Y. Ha, J. Kim and T. Ruggeri: \textit{Emergent behaviors of thermodynamic Cucker-Smale particles}. SIAM J. Math. Anal. \textbf{50} (2018), 3092-3121   	
\bibitem{Ha-Kim-Ruggeri-ARMA-2020} S.-Y. Ha, J. Kim, T. Ruggeri: \textit{From the relativistic mixture of gases to the relativistic Cucker-Smale flocking.} Arch. Rational Mech. Anal. {\bf235} (2020), 1661-1706.
   
\bibitem{Ha-Kim-Ruggeri-CMS-2021} S.-Y. Ha, J. Kim and T. Ruggeri: \textit{Kinetic and hydrodynamic models for the relativistic Cucker–Smale ensemble and emergent dynamics.} Commun. Math. Sci.{\bf19} (2021), 1945-1990.
    
\bibitem{Ha-Kim-Zhang-KRM-2018} S.-Y. Ha, J. Kim and X. Zhang: \textit{Uniform stability of the Cucker–Smale model and its application to the mean-field limit}. Kinet. Relat. Models {\bf11} (2018), 1157-1181.
    

\bibitem{Ha-Ruggeri-ARMA-2017} S.-Y. Ha and T. Ruggeri: \textit{Emergent dynamics of a thermodynamically
consistent particle model.} Arch. Rational Mech. Anal. {\bf223} (2017), 1397-1425.

\bibitem{H-T} S.-Y. Ha and E. Tadmor: \textit{From particle to kinetic and hydrodynamic description of
flocking.} Kinetic Relat. Models {\bf 1} (2008), 415-435.

\bibitem{J-L-M} A. Jadbabaie, Jie Lin and A.S. Morse: \textit{Coordination of groups of mobile autonomous agents using nearest neighbor rules.} IEEE Transactions on Automatic Control  {\bf 48} (2003), 988--1001.

\bibitem{Ku2} Y. Kuramoto: \textit{International symposium on mathematical problems in mathematical physics.} Lecture Notes Theor. Phys.  \textbf{30} (1975), 420.





\bibitem{Pe} C. S. Peskin: \textit{Mathematical aspects of heart physiology.} Courant Institute of Mathematical Sciences, New York, 1975.

\bibitem{P-R} A. Pikovsky, M. Rosenblum and J. Kurths: \textit{Synchronization: A universal concept in
nonlinear sciences.} Cambridge University Press, Cambridge, 2001.

\bibitem{R} C. W. Reynolds: \textit{Flocks, herds and schools: A distributed behavioral model.} Proceeding SIGGRAPH 87 Proceedings of the 14th annual conference on Computer graphics and interactive techniques 25--34 (1987).

\bibitem{book}
T. Ruggeri and M. Sugiyama: \textit{Rational extended thermodynamics beyond the monatomic gas}. Springer, Cham Heidelberg New York Dordrecht London (2015).

\bibitem{R-X-Z} T. Ruggeri, Q. H. Xiao and H. J. Zhao: \textit{Nonlinear hyperbolic waves in relativistic gases of massive particles with Synge energy.} Arch. Rational Mech. Anal. {\bf239} (2021), 1061-1109.


\bibitem{Synge} J. L. Synge: \textit{The Relativistic Gas.} Amsterdam North Holland, 1957.

\bibitem{Taub-1967} A. H. Taub: \textit{Relativistic hydrodynamics, relativity theory and astrophysics 1. relativity and cosmology (J. EHLERS, ed.)}, American Mathematical Society, Providence (1967), 170-193.

\bibitem{T-T} J. Toner and Y. Tu: \textit{Flocks, herds, and schools: A quantitative theory of flocking.}
Phys. Rev. E {\bf 58} (1998), 4828-4858.

\bibitem{T-B-L} C. M. Topaz, A. L. Bertozzi and M. A. Lewis: \textit{A nonlocal continuum model for biological aggregation.} Bull. Math. Biol. {\bf 68} (2006), 1601-1623.

\bibitem{T-B} C. M. Topaz and A. L. Bertozzi: \textit{Swarming patterns in a two-dimensional kinematic model for biological groups.} SIAM J. Appl. Math. {\bf 65} (2004), 152-174.

\bibitem{V-C-B-C-S} T. Vicsek, A. Czir\'ok, E. Ben-Jacob, I. Cohen and O. Schochet: \textit{Novel type of phase transition in a system of self-driven particles.} Phys. Rev. Lett. {\bf 75}, 1226-1229 (1995).

\bibitem{VZ} Vicsek, T. and Zefeiris, A.: \textit{Collective motion.} Phys. Rep. {\bf 517} (2012), 71-140.

\bibitem{Villani-2009} C. Villani: \textit{Optimal Transport, Old and New} (Springer-Verlag, 2009).


	
\bibitem{W2} A. T. Winfree: \textit{Biological rhythms and the behavior of populations of coupled oscillators.} J. Theor. Biol. {\bf 16} (1967), 15-42.

\bibitem{Wi1} A. T. Winfree: \textit{The geometry of biological time.} Springer, New York, 1980.
	

\end{thebibliography}
\end{document}